\newcolumntype{M}[1]{>{\centering\arraybackslash}m{#1}}
\theoremstyle{definition}
\def\expandafter\UrlBreaks\expandafter{\UrlBreaks
  \do\a\do\b\do\c\do\d\do\e\do\f\do\g\do\h\do\i\do\j
  \do\k\do\l\do\m\do\n\do\o\do\p\do\q\do\r\do\s\do\t
  \do\u\do\v\do\w\do\x\do\y\do\z\do\A\do\B\do\C\do\D
  \do\E\do\F\do\G\do\H\do\I\do\J\do\K\do\L\do\M\do\N
  \do\O\do\P\do\Q\do\R\do\S\do\T\do\U\do\V\do\W\do\X
  \do\Y\do\Z}
\newsavebox\myboxA
\newsavebox\myboxB
\newlength\mylenA
\newcommand*\xoverline[2][1]{%
    \sbox{\myboxA}{$\m@th#2$}%
    \setbox\myboxB\null% Phantom box
    \ht\myboxB = \ht\myboxA%
    \dp\myboxB = \dp\myboxA%
    \wd\myboxB = #1\wd\myboxA% Scale phantom
    \sbox\myboxB{$\m@th\overline{\copy\myboxB}$}%  Overlined phantom
    \setlength\mylenA{\the\wd\myboxA}%   calc width diff
    \addtolength\mylenA{-\the\wd\myboxB}%
    \ifdim\wd\myboxB<\wd\myboxA%
       \rlap{\hskip 0.5\mylenA\usebox\myboxB}{\usebox\myboxA}%
    \else
        \hskip -0.5\mylenA\rlap{\usebox\myboxA}{\hskip 0.5\mylenA\usebox\myboxB}%
    \fi}
\def\squiggly{\bgroup \markoverwith{\textcolor{black}{\lower3.5\p@\hbox{\sixly \char58}}}\ULon}
\theoremstyle{theorem}
\newtheorem{theorem}{Theorem}[section]
\newtheorem{lemma}[theorem]{Lemma}
\newtheorem{cor}[theorem]{Corollary}
\newtheorem{question}[theorem]{Question}
\newtheorem{problem}[theorem]{Problem}
\newtheorem{prop}[theorem]{Proposition}
\newtheorem*{claim*}{Claim}
\newtheorem*{theorem*}{Theorem}
\newtheorem*{prop*}{Proposition}
\newtheorem*{lemma*}{Lemma}
\newtheorem*{maintheorem*}{Main Theorem}
\newtheorem*{conjecture*}{Conjecture}
\numberwithin{equation}{section}
\theoremstyle{definition}
\newtheorem{defn}[theorem]{Definition}
\newtheorem{example}[theorem]{Example}
\newtheorem{remark}[theorem]{Remark}
\newcommand{\twoN}[0]{2^{\mathbb{N}}}
\newcommand{\R}[0]{\mathbb{R}}
\newcommand{\Q}[0]{\mathbb{Q}}
\newcommand{\N}[0]{\mathbb{N}}
\newcommand{\Z}[0]{\mathbb{Z}}
\newcommand{\M}[0]{\mathcal{M}}
\subjclass[2020]{Primary 03C45, 03C64. Secondary 11B37, 11U09.}
\keywords{Distality, expansions of Presburger arithmetic, sparse predicates, strong honest definitions, distal decompositions}
\title[Distal expansions of Presburger arithmetic by a sparse predicate]{Distal expansions of Presburger arithmetic\\by a sparse predicate}
\author{Mervyn Tong}
\address{School of Mathematics, University of Leeds, Leeds LS2 9JT, United Kingdom}
\email{mmhwmt@leeds.ac.uk}
\date{\today}
\begin{document}
\maketitle
\begin{abstract}
    We prove that the structure $(\Z,<,+,R)$ is distal for all congruence-periodic sparse predicates $R\subseteq\N$. We do so by constructing a strong honest definition for every formula $\phi(x;y)$ with $\abs{x}=1$, providing a rare example of concrete distal decompositions.
\end{abstract}
\section{Introduction}\label{sectionintro}
One of the most important threads of model-theoretic research is identifying and studying \textit{dividing lines} in the universe of structures: properties $\mathcal{P}$ such that structures with $\mathcal{P}$ are `tame' and `well-behaved' in some sense.

Two dividing lines that have attracted much interest, not just in model theory but also in fields such as combinatorics and machine learning, are \textit{stability} and \textit{NIP} (`not the independence property'). In \cite{simondistal}, Simon introduced the dividing line of `distality', intended to characterise NIP structures that are `purely unstable'. Indeed, stability and distality can be viewed as two opposite ends of the NIP spectrum: no infinite structure satisfies both simultaneously. However, a stable structure can admit a distal expansion, and this is (a special case of) the subject of curiosity among many model theorists, phrased in \cite{distalityvaluedfields} as the following question.
\begin{question}\label{distalexpansion}
    Which NIP structures admit distal expansions?
\end{question}
The reason (or one such reason) this is a question of interest is precisely the fact that distal structures have nice structural properties. Most notably, in \cite{distaldefn}, Chernikov and Simon prove that a structure $\mathcal{M}$ is distal if and only if every every formula $\phi(x;y)$ in its theory has a \textit{strong honest definition}, or (as termed in \cite{galvin}) a \textit{distal (cell) decomposition}. Informally, this means that given a finite set $B\subseteq M^y$, there is a decomposition of $M^x$, uniformly definable from $B$, into finitely many cells, such that the truth value of $\phi(x;b)$ is constant on each cell for all $b\in B$.

Cell decompositions in general have proved useful for deriving various results, particularly of a combinatorial nature, and distal decompositions are no exception. Many results that hold in the real field, where we have semialgebraic cell decomposition, that were found to generalise to o-minimal structures, where we have o-minimal cell decomposition, turn out to also generalise to distal structures, where we have distal decomposition (note that o-minimal structures are distal, and in fact, o-minimal cell decomposition is a special case of distal decomposition). A notable example concerns the strong Erd\H{o}s--Hajnal property. It was shown in \cite{realerdoshajnal} that every definable relation over the real field has the strong Erd\H{o}s--Hajnal property. This was later generalised in \cite{ominerdoshajnal} to every definable, topologically closed relation in any o-minimal expansion of a real-closed field. Finally, it was shown in \cite{regularitylemma} that a structure is distal \textit{if and only if} every relation in its theory satisfies the definable strong Erd\H{o}s--Hajnal property.

Such results support the view (such as in \cite{galvin}) that distality is the `correct' context in which combinatorics should be done; in other words, that the distal decomposition is the `correct' strength of decomposition that affords structures good combinatorial properties.

The main result of this paper thus fits nicely into the context described above.
\begin{maintheorem*}[Theorem \ref{final}]
    Let $R\subseteq \N$ be a congruence-periodic sparse predicate. Then the structure $(\Z,<,+,R)$ is distal.
\end{maintheorem*}
Note that, by \cite[Corollary~2.20]{dolichgoodrick}, such structures $(\Z,<,+,R)$ have dp-rank $\geq\omega$, so our main theorem completely classifies these structures on the model-theoretic map of the universe.

Here, \textit{congruence-periodic} means that, for all $m\in\N^+$, the increasing sequence by which $R$ is enumerated is eventually periodic modulo $m$. \textit{Sparsity} will be defined in Definition \ref{sparsedefn}, but for now we content ourselves by noting that sparse predicates include such examples as $d^\N:=\{d^n: n\in\N\}$ for any $d\in\N_{\geq 2}$, the set of Fibonacci numbers, and $\{n!: n\in\N\}$.

We now give an overview of how this result extends and builds on results in the extant literature. In \cite{regularnip}, Lambotte and Point prove that $(\Z,+,<,R)$ is NIP for all congruence-periodic sparse predicates $R\subseteq\N$, so our result is a strengthening of theirs. They also define the notion of a \textit{regular} predicate, show that regular predicates are sparse, allowing them to apply their result to congruence-periodic regular predicates. It turns out that the converse holds: sparse predicates are regular, which we prove in Theorem \ref{sparseisregular} as a result of independent interest, providing an equivalent, more intuitive definition of sparsity.

In the same paper, they also prove that $(\Z,+,R)$ is superstable for all regular predicates $R\subseteq\N$. So, if additionally $R$ is congruence-periodic, then our result shows that $(\Z,+,R)$ admits a distal expansion, namely, $(\Z,+,<,R)$. This provides a large class of examples of stable structures with distal expansions, which should provide intuition towards an answer to Question \ref{distalexpansion}. We note that examples of NIP structures \textit{without} distal expansions are far more meagre, and so far the only known method of proving that a structure does not have a distal expansion is to exhibit a formula without the strong Erd\H{o}s--Hajnal property (see \cite{regularitylemma}). It is our hope that our more direct proof of distality may provide new methods and insights to that end.

%not distal, $\mathcal{Z}$ interprets arithmetic: Korec \cite{korec} includes a list of such $R\subseteq\N$, such as $R=f(\N)$ for some $f\in \N[x]$ of degree at least 2. 
To our knowledge, no examples of $R\subseteq \N$ are known such that $(\Z,<,+,R)$ is NIP but not distal. As discussed above, distality is a desirable strengthening of NIP, so it would be pleasant if NIP sufficed for distality for such structures. We therefore ask the following question.

\begin{question}
    Is there $R\subseteq\N$ such that $(\Z,<,+,R)$ is NIP but not distal?
\end{question}
In fact, even the existence of a non-distal NIP expansion of $(\N,<)$ appears to be unknown --- see \cite[Question~11.16]{walsberg}. More broadly, we would like to understand the following problem.
\begin{problem}
    Characterise the class of predicates $R\subseteq \N$ such that $(\Z,<,+,R)$ is distal.
\end{problem}
A natural first step to understanding this problem is to ask the following question.

\begin{question}
    Let $R\subseteq\N$ be sparse but not necessarily congruence-periodic. Must the structure $(\Z,<,+,R)$ be distal?
\end{question}

Congruence-periodicity is used in an essential way in our proof, so we expect that a substantial change in approach would be required to provide a positive answer to this question.

We had previously wondered if every non-distal structure of the form $(\Z,<,+,R)$ interprets arithmetic, but $R=2^\N \cup 3^\N$ serves as a counterexample\footnote{We thank Gabriel Conant for bringing this to our attention.}. Indeed, the resulting structure does not interpret arithmetic \cite{schulz} and is IP (hence non-distal). A proof of the latter is given in \cite{hieronymischulz} (where, in fact, 2-IP is claimed), but in personal communication with the authors an error was found; they have nonetheless supplied an alternative argument that the structure is (1-)IP.

Our original motivation for proving the main theorem was to answer a question of Michael Benedikt (personal communication), who asked whether the structure $(\Z,<,+,\twoN)$ was distal. His motivation was to know whether the structure has so-called \textit{Restricted Quantifier Collapse (RQC)}, a property satisfied by all distal structures \cite{benedikt}. In personal communication, he informed us that he is also interested in obtaining better VC bounds for formulas in this structure (coauthoring \cite{benediktvc} to that end), and that a constructive proof of distality could help in this endeavour. Our proof is nothing but constructive.
\subsection*{Strategy of our proof and structure of the paper}
The proof of our main theorem, Theorem \ref{final}, comprises most of the paper. In Section \ref{sectionprelim}, we define and motivate the terminology used in our main theorem, and state and prove basic facts about sparse predicates that are either useful for our proof or of independent interest. Our proof begins in earnest in Section \ref{sectionreduction}.

Let us describe the strategy of the proof. Perhaps its most noteworthy feature, and what distinguishes it from most other proofs of distality, is that we prove that the structure is distal by giving explicit strong honest definitions (hence, distal decompositions) for `representative' formulas of the theory. Most proofs of distality in the literature go via the original definition of distality (given in \cite{simondistal}) using indiscernible sequences, which offers no information on the structure or complexity (such as `distal density') of the distal decomposition, which is itself a subject of interest, such as in \cite{anderson}. As phrased in \cite{distalityvaluedfields}, `occasionally [the characterisation of distality via strong honest definitions] is more useful since it ultimately gives more information about definable sets, and obtaining bounds on the complexity of strong honest definitions is important for combinatorial applications'.

The first stage of the proof is thus to characterise `representative' formulas of the theory, which is the goal of Section \ref{sectionreduction}. The main result in that section is Theorem \ref{sufficiency}, where we show that to prove the distality of our structure, it suffices to construct strong honest definitions for suitable so-called $(F_n; ...)$ formulas (where $n\in\N^+$), to be defined in Definition \ref{defnformulas}. We prove this by first showing that every formula $\phi(x;y)$ with $\abs{x}=1$ is (essentially) equivalent to a Boolean combination of so-called $(E_n)$ formulas (Proposition \ref{agenda}), and then showing that every $(E_n)$ formula is (essentially) equivalent to a Boolean combination of suitable $(F_n; ...)$ formulas and $(E_{n-1})$ formulas (Corollary \ref{EntoFn}). By induction on $n\in\N^+$, this gives an explicit recipe for writing every formula $\phi(x;y)$ with $\abs{x}=1$ as (essentially) a Boolean combination of suitable $(F_n; ...)$ formulas. This is summarised precisely at the end of Section \ref{sectionreduction}.

% of the form
% \[tx<y_2+\textbf{B}\cdot P_\Delta(x-y_1;\textbf{A}).\]
% Here, $\textbf{A},\textbf{B}$ are $n$-tuples of \textit{operators} (to be defined in Section \ref{sectionprelim}), $t\in\{0,1\}$, and, for all $u\in\Z$, $P_\Delta(u;\textbf{A})$ is the $n$-tuple $z\in R^n$ maximising $\textbf{A}\cdot z$ subject to $\textbf{A}\cdot z<u$ and some extra constraints.
% 
Constructing strong honest definitions for $(F_n; ...)$ formulas is the goal of Section \ref{sectionmain} of the paper. The broad strategy is to induct on $n\in\N^+$. Theorem \ref{catchall}, which produces new strong honest definitions from existing ones, is a stronger version of the base case $n=1$ (Corollary \ref{onedimension}), and is also a key ingredient in the inductive step (Theorem \ref{multidimension}). Morally, the base case is $n=0$ (see Corollary \ref{onedimension}), where the formula is a formula of Presburger arithmetic, hence admitting a strong honest definition since Presburger arithmetic is distal; Corollary \ref{onedimension} bootstraps this strong honest definition to construct ones for $(F_1; ...)$ formulas using Theorem \ref{catchall}. Thus, the proof strategy can be described as `generating strong honest definitions in $(\Z,<,+,R)$ from ones in the distal structure $(\Z,<,+)$', which may prove a useful viewpoint for similar applications in the future.

We thus give a recipe to construct explicit strong honest definitions, and thus distal decompositions, for all formulas $\phi(x;y)$ with $\abs{x}=1$. However, we make no comment on the structure of these distal decompositions, as the complexity of our construction renders such analysis a separate project. In particular, we make no claim on the `optimality' of our decomposition, to which little credence is lent by the length of our construction anyway. The objective of this paper is to provide a rare example of concrete distal decompositions, which the reader may analyse for aspects of distal decompositions in which they are interested.
\subsection*{Acknowledgements}
We thank Pantelis Eleftheriou for providing numerous helpful suggestions on the content and structure of this paper, as well as Pablo And\'{u}jar Guerrero and Aris Papadopoulos for fruitful discussions on distality. We would also like to thank the referee for their helpful comments and corrections. \textit{Soli Deo gloria.}
\section{Preliminaries and basic facts}\label{sectionprelim}
This section lays out the two key definitions in our main theorem --- distality of a structure and sparsity of a predicate --- and provides some commentary on these notions.
\subsection{Distality}
Let us begin by defining distality. As mentioned in the introduction, distality was originally defined by Simon in \cite{simondistal} using indiscernible sequences, but we shall take the following --- proven to be equivalent by Chernikov and Simon in \cite[Theorem~21]{distaldefn} --- as our definition of distality. Recall that if $\phi(x;y)$ is a formula in a structure $\mathcal{M}$, $a\in M^x$, and $B\subseteq M^y$, the \textit{$\phi$-type of $a$ over $B$} is $\text{tp}_\phi(a/B):=\{\phi(x;b): b\in B, \mathcal{M}\models \phi(a;b)\}\cup \{\neg\phi(x;b): b\in B, \mathcal{M}\models \neg\phi(a;b)\}$.
\begin{defn}\label{distaldefn}
    Say that an $\mathcal{L}$-structure $\mathcal{M}$ is \textit{distal} if for every partitioned $\mathcal{L}$-formula $\phi(x;y)$, there is a formula $\psi(x;y^{(1)},...,y^{(k)})$ such that for all $a\in M^x$ and finite $B\subseteq M^y$ with $\abs{B}\geq 2$, there is $c\in B^k$ such that $a\models \psi(x;c)$ and $\psi(x;c)\vdash \text{tp}_\phi(a/B)$, that is, for all $a'\models \psi(x;c)$ and $b\in B$, $\mathcal{M}\models \phi(a;b)\leftrightarrow \phi(a';b)$.
\end{defn}
In Definition \ref{distaldefn}, $\psi$ is known as a \textit{strong honest definition} (in $\mathcal{M}$) for $\phi$. By \cite[Proposition~1.9]{distalityvaluedfields}, when showing that $\mathcal{M}$ is distal, it suffices to verify that every partitioned formula $\phi(x;y)$ with $\abs{x}=1$ has a strong honest definition.

The following lemma is straightforward to prove.
\begin{lemma}\label{boolcomb}
    Let $\phi_1(x;y)$ and $\phi_2(x;y)$ be formulas, respectively with strong honest definitions $\psi_1(x;y^{(1)}, ..., y^{(k)})$ and $\psi_2(x;y^{(1)}, ..., y^{(l)})$.
    \begin{enumerate}[(i)]
        \item The formula $\neg\phi_1(x;y)$ has strong honest definition $\psi_1(x;y^{(1)}, ..., y^{(k)})$.
        \item The formula $\phi_1\wedge \phi_2(x;y)$ has strong honest definition 
        \[\psi_1(x;y^{(1)}, ..., y^{(k)})\wedge\psi_2(x;y^{(k+1)}, ..., y^{(k+l)}).\]
    \end{enumerate}
\end{lemma}
% \begin{proof}
%     The proof of (i) is immediate; we now prove (ii). Fix $a\in M^x$ and finite $B\subseteq M^y$ with $\abs{B}\geq 2$. Let $c\in B^k$ be such that $a\models \psi_1(x;c)$ and $\psi_1(x;c)\vdash \text{tp}_{\phi_1}(a/B)$, and let $e\in B^l$ be such that $a\models \psi_2(x;e)$ and $\psi_2(x;e)\vdash \text{tp}_{\phi_2}(a/B)$. Then $a\models \psi_1(x;c)\wedge\psi_2(x;e)$, and for all $b\in B$,
%     \[\psi_1(x;c)\wedge\psi_2(x;e)\vdash (\phi_1(x;b)\leftrightarrow\phi_1(a;b))\wedge (\phi_2(x;b)\leftrightarrow\phi_2(a;b)),\]
%     so $\psi_1(x;c)\wedge\psi_2(x;e)\vdash \text{tp}_{\phi_1\wedge \phi_2}(a/B)$.
% \end{proof}
% \begin{remark}\label{boolcomb}
%     We will frequently use the fact that the property of having a strong honest definition is preserved under taking Boolean combinations. Indeed, if $\phi$ has a strong honest definition $\psi$, then $\psi$ is also a strong honest definition for $\neg \phi$; and if $\phi_1(x;y)$ and $\phi_2(w;z)$ respectively have strong honest definitions $\psi_1(x;y^{(1)}, ..., y^{(k)})$ and $\psi_2(w;z^{(1)}, ..., z^{(l)})$, then
%     \[\theta(x,w;(y^{(i)},z^{(i)})_{1\leq i\leq k+l}):=\psi_1(x;y^{(1)}, ..., y^{(k)})\wedge \psi_2(w;z^{(k+1)}, ..., z^{(k+l)})\]
%     is a strong honest definition for $\phi_1\wedge \phi_2(x,w;y,z)$. Checking this is left as an exercise.
% \end{remark}
When constructing a strong honest definition for $\phi(x;y)$, it is often convenient to partition $M^x$ into finitely many pieces and use a different formula for each piece. This is the content of the following proposition.
\begin{prop}\label{system}
    Fix an $\mathcal{L}$-structure $\mathcal{M}$ and a partitioned $\mathcal{L}$-formula $\phi(x;y)$. Then $\phi$ has a strong honest definition in $\mathcal{M}$ if and only if there is a finite set of formulas $\Psi(x;y^{(1)},...,y^{(k)})$ such that for all $a\in M^x$ and finite sets $B\subseteq M^y$ with $\abs{B}\geq 2$, there is $c\in B^k$ and $\psi\in \Psi$ such that $a\models \psi(x;c)$ and $\psi(x;c)\vdash \mathrm{tp}_\phi(a/B)$.
\end{prop}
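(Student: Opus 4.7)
The forward direction is immediate: given a strong honest definition $\psi$ for $\phi$, take $\Psi = \{\psi\}$. All the work lies in the backward direction.

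Suppose we have $\Psi = \{\psi_1(x; \overline{y}^{(1)}), \ldots, \psi_n(x; \overline{y}^{(n)})\}$ with tuple-arities $k_1, \ldots, k_n$. The plan is to amalgamate these into a single strong honest definition $\psi$. The naive disjunction $\bigvee_i \psi_i$ fails: if $a'$ realises $\psi(x;c)$ via a different disjunct than the one witnessing membership for $a$, we have no a priori control over $\phi(a';b)$ for $b \in B$, so the implication $\psi(x;c) \vdash \mathrm{tp}_\phi(a/B)$ may fail. The trick is to augment the parameters with a \emph{selector} block that singles out exactly one disjunct.

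Concretely, first pad each $\psi_i$ to a common $y$-arity $k := \max_i k_i$ by introducing dummy variables (which do not affect satisfaction). Then introduce $n+1$ additional parameter variables $z^{(0)}, z^{(1)}, \ldots, z^{(n)}$ of sort $y$, and define
\[
\psi(x;\overline{y}^{(1)},\ldots,\overline{y}^{(k)},z^{(0)},\ldots,z^{(n)}) \;:=\; \bigvee_{i=1}^n \Big( z^{(i)} = z^{(0)} \,\wedge\, \bigwedge_{j \neq i,\, j \geq 1} z^{(j)} \neq z^{(0)} \,\wedge\, \psi_i(x; \overline{y}^{(1)},\ldots,\overline{y}^{(k_i)}) \Big).
\]
Given $(a,B)$ with $|B|\geq 2$, pick distinct $b_1,b_2 \in B$ and, using the hypothesis on $\Psi$, pick $i^* \in \{1,\ldots,n\}$ and $c_0 \in B^{k_{i^*}}$ with $a \models \psi_{i^*}(x;c_0)$ and $\psi_{i^*}(x;c_0) \vdash \mathrm{tp}_\phi(a/B)$. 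Pad $c_0$ arbitrarily to a tuple $c \in B^k$, set $z^{(0)} = z^{(i^*)} = b_1$, and set $z^{(j)} = b_2$ for $j \in \{1,\ldots,n\}\setminus\{i^*\}$.

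The verification is then short. Clearly $a$ satisfies the $i^*$-th disjunct, so $a \models \psi(x;c,\overline{z})$. Conversely, for any $a'\models \psi(x;c,\overline{z})$, the selector clauses $z^{(i)}=z^{(0)}$ and $z^{(j)}\neq z^{(0)}$ (for $j\neq i$) uniquely force $i = i^*$, so only the $i^*$-th disjunct can be active; hence $a' \models \psi_{i^*}(x;c_0)$, and by choice of $c_0$ we conclude $a' \models \mathrm{tp}_\phi(a/B)$. The only mild obstacle is conceptual — realising that a disjunction alone is insufficient and that one must explicitly encode the choice of disjunct using the hypothesis $|B|\geq 2$ — and this is handled cleanly by the selector block above.
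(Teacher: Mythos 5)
Your proof is correct and takes essentially the same approach as the paper: amalgamate the finite system into one disjunction and add extra $y$-sorted parameters whose (in)equalities, exploiting $|B|\geq 2$, select exactly one active disjunct. The paper's selector is a pair $u^{(i)},v^{(i)}$ per disjunct with the guard $u^{(i)}=v^{(i)}$, whereas you use a shared anchor $z^{(0)}$ and common $y$-blocks, but this is a cosmetic variation of the same mechanism.
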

\begin{proof}
    The forward direction is immediate. Let $\Psi(x;y^{(1)},...,y^{(k)})$ witness the antecedent of the backward direction; enumerate its elements as $\psi_1,...,\psi_n$. Let
    \[\theta(x;y^{(i,1)}, ..., y^{(i,k)},u^{(i)},v^{(i)}: 1\leq i\leq n):=\bigvee_{i=1}^n \left(u^{(i)}=v^{(i)}\wedge \psi_i(x;y^{(i,1)},...,y^{(i,k)})\right),\]
    where $u^{(i)}, v^{(i)}$ are tuples of variables of length $\abs{y}$. We claim that this is a strong honest definition for $\phi$. Fix $a\in M^x$ and finite $B\subseteq M^y$ with $\abs{B}\geq 2$. There is $c\in B^k$ and $1\leq j\leq n$ such that $a\models \psi_j(x;c)$ and $\psi_j(x;c)\vdash \text{tp}_\phi(a/B)$. Choose $u^{(1)},v^{(1)}, ..., u^{(n)},v^{(n)}\in B$ such that $u^{(i)}=v^{(i)}$ if and only if $i=j$; this is possible since $\abs{B}\geq 2$. Then $a\models \theta(x;c, u^{(i)}, v^{(i)}: 1\leq i\leq n)$ since $a\models \psi_j(x;c)$, and $\theta(x;c, u^{(i)}, v^{(i)}: 1\leq i\leq n)\vdash \psi_j(x;c)$ since $u^{(j)}\neq v^{(j)}$ for all $j\neq i$. But now $\psi_j(x;c)\vdash \text{tp}_\phi(a/B)$.
\end{proof}
Call such a $\Psi$ a \textit{system of strong honest definitions} (in $\mathcal{M}$) for $\phi$.
\begin{defn}
    Let $\phi(x; y)$ be an $\mathcal{L}$-formula with $m:=\abs{x}$ and $n:=\abs{y}$. Say that an $\mathcal{L}$-formula $\theta(u; v)$ is a \textit{descendant} of $\phi$ if
    \[\theta(u; v)=\phi(f_1(u), ..., f_m(u); g_1(v), ..., g_n(v))\]
    for some $\mathcal{L}$-definable functions $f_1, ..., f_m$ of arity $\abs{u}$ and $g_1, ..., g_n$ of arity $\abs{v}$.
\end{defn}
Note that the descendant relation is reflexive and transitive.
\begin{lemma}\label{descendants}
    Fix an $\mathcal{L}$-structure $\mathcal{M}$ with at least two $\emptyset$-definable elements. If an $\mathcal{L}$-formula $\phi(x;y)$ has a strong honest definition, so does any descendant of $\phi$.
\end{lemma}
\begin{proof}
    Let $\alpha,\beta\in M$ be distinct $\emptyset$-definable elements. Let $\phi(x; y)$ be an $\mathcal{L}$-formula with $m:=\abs{x}$ and $n:=\abs{y}$, and suppose it has a strong honest definition $\psi(x;y^{(1)},...,y^{(k)})$. Let
    \[\theta(u; v)=\phi(f_1(u), ..., f_m(u); g_1(v), ..., g_n(v))\]
    be a descendant of $\phi$, for some $\mathcal{L}$-definable functions $f_1, ..., f_m$ and $g_1, ..., g_n$.
    
    Write $[k]:=\{1, ..., k\}$. For $I\sqcup J\subseteq [k]$ (that is, $I,J\subseteq [k]$ disjoint), let
    \[\zeta_{IJ}(u;v^{(i)}: i\in [k]\setminus(I\cup J)):=\psi(f_1(u), ..., f_m(u); h^{(1)}_1, ..., h^{(1)}_n, ..., h^{(k)}_1, ..., h^{(k)}_n),\]
    where
    \[h^{(i)}_j=\begin{cases}
        \alpha&\text{if }i\in I,\\
        \beta&\text{if }i\in J,\\
        g_j(v^{(i)})&\text{otherwise}.
    \end{cases}\]
    We claim that $\{\zeta_{IJ}: I\sqcup J\subseteq [k]\}$ is a system of strong honest definitions for $\theta$. Indeed, let $a\in M^u$ and $B\subseteq M^v$ with $2\leq \abs{B}<\infty$. Let $\bar{B}:=\{(g_1(v), ..., g_n(v)): v\in B\}$, and let
    \[\hat{B}:=\bar{B}\cup\{(\alpha, ..., \alpha), (\beta, ..., \beta)\}\subseteq M^n.\]
    Since $\psi$ is a strong honest definition for $\phi$ and $2\leq|\hat{B}|<\infty$, there is $c=(c^{(1)}, ..., c^{(k)})\in \hat{B}^k$ such that $(f_1(a), ..., f_m(a))\models \psi(x;c)$ and $\psi(x;c)\vdash \text{tp}_\phi (f_1(a), ..., f_m(a)/\hat{B})\supseteq\text{tp}_\phi (f_1(a), ..., f_m(a)/\bar{B})$.
    
    Let $I:=\{i\in [k]: c^{(i)}=(\alpha, ..., \alpha)\}$ and $J:=\{i\in [k]: c^{(i)}=(\beta, ..., \beta)\}$. Then, there is a tuple $(w^{(i)}: i\in [k]\setminus (I\cup J))$ from $B$ such that
    \[\psi(f_1(u), ..., f_m(u);c)=\zeta_{IJ}(u;w^{(i)}: i\in [k]\setminus(I\cup J)),\]
    whence $a\models \zeta_{IJ}(u;w^{(i)}: i\in [k]\setminus(I\cup J))$ and $\zeta_{IJ}(u;w^{(i)}: i\in [k]\setminus(I\cup J))\vdash \text{tp}_\theta(a/B)$.
\end{proof}
\begin{remark}\label{remarkdescendants}
    In the proof above, if the function $v\mapsto (g_1(v), ..., g_n(v))$ were injective, then
    \[\zeta(u;v^{(i)}: i\in [k]):=\psi(f_1(u), ..., f_m(u); g_1(v^{(1)}), ..., g_n(v^{(1)}), ..., g_1(v^{(k)}), ..., g_n(v^{(k)}))\]
    would have sufficed as a strong honest definition for $\theta$.
\end{remark}
\begin{example}\label{LOexample}
    It is well-known that Presburger arithmetic is distal (see, for example, \cite[Example~2.9]{regularitylemma}), but, as an example, let us prove this by constructing strong honest definitions. Another well-known fact about Presburger arithmetic (see, for example, \cite{cooper}) is that it admits quantifier elimination in the language $\mathcal{L}_{\text{Pres}}:=(<,+,-,0,1,(m\mid\cdot)_{m\in\N^+})$, where $m\mid\cdot$ is a unary relation symbol interpreted as divisibility by $m$.
    
    It thus suffices to construct a strong honest definition for every atomic $\mathcal{L}_{\text{Pres}}$-formula $\phi(x;y)$ with $\abs{x}=1$. These have the form $f(x,y)=0$, $f(x,y)<0$, or $m\mid f(x,y)$, where $f$ is a $\Z$-affine function. We can ignore formulas of the form $f(x,y)=0$, since $f(x,y)=0\leftrightarrow f(x,y)<1\wedge -f(x,y)<1$. By Lemma \ref{descendants}, it suffices to construct strong honest definitions for $\phi(x;y):=x<y$ and $\psi_m(x;y):=m\mid (x-y)$.
    
    The formula $\phi(x;y)$ admits a system of strong honest definitions given by $\{x<y, x=y, y<x, y<x<y'\}$, where $\abs{y'}=\abs{y}$; in what follows, we will understand $-\infty<x<y$ to mean $x<y$ and $y<x<+\infty$ to mean $y<x$. Indeed, let $a\in\Z$ and $B\subseteq \Z$ with $2\leq \abs{B}<\infty$. Enumerate $B$ as $\{b_1, ..., b_n\}$, where $b_1< \cdots < b_n$. If there is $1\leq i\leq n$ such that $a=b_i$, then $a\models x=b_i$ and $x=b_i\vdash \text{tp}_{\phi}(a/B)$. Otherwise, there is $0\leq i\leq n$ such that $b_i<a<b_{i+1}$ (where $b_0:=-\infty$ and $b_{n+1}:=+\infty$), whence $a\models b_i<x<b_{i+1}$ and $b_i<x<b_{i+1}\vdash \text{tp}_{\phi}(a/B)$.

    The formula $\psi_m(x;y)$ admits a system of strong honest definitions given by $\{m\mid (x-i): 0\leq i<m\}$. Indeed, let $B\subseteq \Z$ with $2\leq \abs{B}<\infty$. Given $a\in\Z$, there is $0\leq i<m$ such that $a\equiv i\mod m$, whence $a\models m\mid (x-i)$ and $m\mid (x-i)\vdash \text{tp}_{\psi_m}(a/\Z)$.
    % Note therefore that, by Lemma \ref{descendants}, as long as $\mathcal{M}$ has at least two $\emptyset$-definable elements, any formula $\phi(u;v)$ of the form $f(u)<g(v)$ has a strong honest definition, where $f$ and $g$ are $\mathcal{L}$-definable functions.
    % Let $x,y$ be singleton variables. $x$ be a singleton variable, $y$ be a tuple of variables, and $f$ be an $\mathcal{L}$-definable function of arity $\abs{y}$. Then the formula $\phi(x;y):=x<f(y)$ admits a system of strong honest definitions, given by $\{x<f(y), x=f(y), x>f(y), f(y)<x<f(y')\}$, where $\abs{y'}=\abs{y}$.

    % Indeed, let $B\subseteq M^k$ with $2\leq \abs{B}<\infty$. Enumerate $B$ as $\{b_1, ..., b_n\}$, where $f(b_1)\leq \cdots \leq f(b_n)$. Then the $\phi$-types over $B$ are precisely given by
    % \[x<f(b_1), x=f(b_1), f(b_1)<x<f(b_2), ..., f(b_{n-1})<x<f(b_n), x=f(b_n), x>f(b_n).\]
    % Indeed, let $a\in M$ and $B\subseteq M^k$ with $2\leq \abs{B}<\infty$. Enumerate $B$ as $\{b_1, ..., b_n\}$, where $f(b_1)\leq \cdots \leq f(b_n)$. If $a<f(b_1)$, then $a\models x<f(b_1)$ and $x<f(b_1)\vdash \text{tp}_\phi(a/B)$. If $a>f(b_n)$, then $a\models x>f(b_n)$ and $x>f(b_n)\vdash \text{tp}_\phi(a/B)$. If there is $1\leq i\leq n$ such that $a=f(b_i)$, then $a\models x=f(b_i)$ and $x=f(b_i)\vdash \text{tp}_\phi(a/B)$. The only case remaining is where there is $1\leq i< n$ such that $f(b_i)<a<f(b_{i+1})$; then, $a\models f(b_i)<x<f(b_{i+1})$ and $f(b_i)<x<f(b_{i+1})\vdash \text{tp}_\phi(a/B)$. 
\end{example}
\begin{remark}
    Recall that, in the introduction, we discussed that every formula $\phi(x;y)$ in the theory of a distal structure $\mathcal{M}$ has a \textit{distal (cell) decomposition}. In spirit, this is the same as a strong honest definition, but its characterisation in terms of a partition of $M^x$ provides more concrete structural and combinatorial information. For this reason, compared to strong honest definitions, distal decompositions arguably give a more attractive characterisation of distal structures. However, in this paper we will continue to talk about constructing strong honest definitions, as this is the cleaner definition to work with; the reader should nonetheless keep in mind the implications regarding distal decompositions, and is referred to \cite{galvin} for a more detailed account of distal decompositions.
\end{remark}
\subsection{Sparsity}
Let us now define sparse predicates; these were introduced by Semenov in \cite{semenov}. For an infinite predicate $R\subseteq\N$ enumerated by the increasing sequence $(r_n:n\in\N)$, let $\sigma: R\to R$ denote the successor function, that is, $\sigma(r_n)=r_{n+1}$ for all $n\in \N$. By an \textit{operator} on $R$ we mean a function $R\to\Z$ of the form $a_n \sigma^n +\cdots+a_0 \sigma^0$, where $a_n, ..., a_0\in\Z$ and $\sigma^0$ is the identity function. For operators $A$ and $B$, write
\[\begin{cases}
    A=_R B&\text{if }Az=Bz\text{ for all }z\in R,\\
    A>_R B&\text{if }Az>Bz\text{ for cofinitely many }z\in R,\\
    A<_R B&\text{if }Az<Bz\text{ for cofinitely many }z\in R.
\end{cases}\]
The subscript $R$ is dropped where obvious from context. We also use $\sigma^{-1}$ to denote the predecessor function, where by convention we define $\sigma^{-1} (\min R):=\min R$.
%We will define $\sgn(A)$ to be $0$, $1$, and $-1$ when $A=_R 0$, $A>_R 0$, and $A<_R 0$ respectively.
\begin{example}\label{dNoperator}
Consider the predicate $d^\N := \{d^n: n\in \N\}$ for some fixed $d\in\N_{\geq 2}$, and let $A$ be an operator on $d^\N$, say of the form $a_n\sigma^n+\cdots+a_0\sigma^0$ where $a_n, ..., a_0\in\Z$. Then, for all $z\in d^\N$ we have $Az=(a_nd^n+\cdots + a_0d^0)z$, so the action of $A$ on $d^\N$ is multiplication by the constant $a_nd^n+\cdots + a_0d^0$.
\end{example}
\patchcmd{\thmhead}{(#3)}{#3}{}{}
\begin{defn}[{\cite[\S3]{semenov}}]\label{sparsedefn}
    Say that an infinite predicate $R\subseteq\N$ is \textit{sparse} if every operator $A$ on $R$ satisfies the following:
    \begin{enumerate}
        \item [(S1)] $A=_R 0$, $A>_R 0$, or $A<_R 0$; and
        \item [(S2)] If $A>_R 0$, then there exists $\Delta\in\N$ such that $A\sigma^\Delta z>z$ for all $z\in R$.\label{sparse2}
    \end{enumerate}
\end{defn}
\patchcmd{\thmhead}{#3}{(#3)}{}{}
\begin{example}\label{dNexample}
    Consider again the predicate $d^\N = \{d^n: n\in \N\}$ for some fixed $d\in\N_{\geq 2}$. By Example \ref{dNoperator}, every operator $A$ on $d^\N$ acts as multiplication by a constant $\lambda_A\in\Z$. Thus, (S1) is clearly satisfied. Furthermore, $A>_R 0$ if and only if $\lambda_A>0$, in which case $A\sigma z= \lambda_A dz>z$ for all $z\in d^\N$, so (S2) is also satisfied and $d^\N$ is sparse.
\end{example}
Other examples of sparse predicates, given by Semenov in \cite[\S3]{semenov}, include the set of Fibonacci numbers, $\{n!: n\in\N\}$, and $\{\lfloor e^n\rfloor: n\in\N\}$.

On the other hand, for all $f\in\N[x]$, the predicate $f(\N)=\{f(n): n\in\N\}$ is not sparse. Indeed, let $f\in\N[x]$; assume without loss of generality that $\deg f\geq 1$. Let $A$ be the operator $\sigma^1-\sigma^0$, so $A>_R 0$ since $f$ is strictly increasing. There is $g\in\N[x]$ with $\deg g<\deg f$ such that $Af(n)=f(n+1)-f(n)=g(n)$ for all $n\in\N$. Hence, for all $\Delta\in\N$, $A\sigma^\Delta f(n)=Af(n+\Delta)=g(n+\Delta)<f(n)$ for sufficiently large $n\in\N$.
\begin{remark}
    It may be tempting to conjecture from these examples and non-examples that $R=(r_n:n\in\N)\subseteq\N$ is sparse if and only if $r_{n+1}/r_n\to\theta$ for some $\theta\in\R_{>1}\cup\{\infty\}$. This is sadly false; in fact, the class of sparse predicates is not very rigid at all. As an example, fixing $d\in\N_{\geq 2}$, recall that $d^\N=\{d^n:n\in\N\}$ is sparse. However, $T:=\{d^n+1: n\in\N\}$ is not sparse, even though $(d^{n+1}+1)/(d^n+1)\to d$. Indeed, the operator $A$ given by $-\sigma^1+d\sigma^0$ is the constant function with image $\{d-1\}$, so $A>_T 0$, but for all $\Delta\in\N$, $A\sigma^\Delta z<z$ for cofinitely many $z\in T$.
    
    Thus, the condition $r_{n+1}/r_n\to\theta >1$ emphatically fails to be \textit{sufficient} for the sparsity of $R$. However, it transpires to be \textit{necessary}, and more can be said --- see Section \ref{morecanbesaid}.
\end{remark}

For $\textbf{A}=(A_1, ..., A_n)$ an $n$-tuple of operators and $z=(z_1, ..., z_n)\in R^n$, we will write $\textbf{A}\cdot z$ for the dot product of $\textbf{A}$ and $z$: that is, $\textbf{A}\cdot z=A_1z_1+\cdots+A_nz_n$.

We now state and prove some basic results about sparse predicates. Among others, our main goal is to show that if $\textbf{A}$ is an $n$-tuple of non-zero operators, then $z\mapsto \textbf{A}\cdot z$ defines an injective function on a natural subset of $R^n$ (Lemma \ref{ordering}).

For the rest of this subsection, fix a sparse predicate $R\subseteq \N$.
\patchcmd{\thmhead}{(#3)}{#3}{}{}
\begin{lemma}[{\cite[Lemma~2]{semenov}}]\label{beatsme}
    Let $A,B$ be operators with $A\neq_R 0$. Then, for $\Delta\in\N$ sufficiently large, $|A\sigma^\Delta z|>Bz$ for all $z\in R$.
\end{lemma}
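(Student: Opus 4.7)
The plan is to derive the bound from two applications of (S2). By (S1) and the identity $|{-A}\sigma^\Delta z| = |A\sigma^\Delta z|$, I may assume $A >_R 0$. Applying (S2) to $A$ gives $\Delta_0 \in \N$ with $A\sigma^{\Delta_0} w > w$ for all $w \in R$; substituting $w = \sigma^j z$ (which again lies in $R$) yields the key inequality
\[ A\sigma^{\Delta_0 + j} z > \sigma^j z \qquad \text{for all } z \in R,\ j \in \N. \]
It therefore suffices to show that $\sigma^j z > |Bz|$ eventually holds uniformly in $z \in R$.

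For that, I would apply (S2) to the operator $\sigma - \sigma^0$, which is trivially $>_R 0$ since $R$ is strictly $\sigma$-increasing. This produces $\Delta_1 \in \N$ with $\sigma^{\Delta_1 + 1} z - \sigma^{\Delta_1} z > z$, and hence $\sigma^{\Delta_1 + 1} z > 2z$ for every $z \in R$. Since $\sigma^{\Delta_1 + 1} z \in R$, iterating this inequality at $w = \sigma^{\Delta_1+1}z$ yields the doubling estimate
\[ \sigma^{k(\Delta_1+1)} z > 2^k z \qquad \text{for all } z \in R,\ k \in \N. \]

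To finish, write $B = \sum_{i=0}^n b_i \sigma^i$ and set $C := \sum_{i=0}^n |b_i|$, so that $|Bz| \leq C\sigma^n z$ by the triangle inequality (using $0 < \sigma^i z \leq \sigma^n z$). Choosing $k$ with $2^k > C$ and applying the doubling estimate at $\sigma^n z \in R$ gives $\sigma^{n + k(\Delta_1+1)} z > C\sigma^n z \geq |Bz|$. Since $j \mapsto \sigma^j z$ is strictly increasing, the bound persists for $\sigma^{\Delta-\Delta_0} z$ whenever $\Delta - \Delta_0 \geq n + k(\Delta_1 + 1)$, and combining with the key inequality produces
\[ A\sigma^\Delta z > \sigma^{\Delta - \Delta_0} z > |Bz| \geq Bz \]
for every $z \in R$. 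Positivity of the left-hand side then upgrades this to $|A\sigma^\Delta z| > Bz$, as required.

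The main conceptual obstacle is that sparsity itself affords no \emph{a priori} quantitative growth rate on $R$ (the estimate alluded to near Section \ref{morecanbesaid} is not yet available at this point in the paper), so an exponential lower bound on $\sigma^j z / z$ must be extracted directly from the axioms. The trick is to apply (S2) to the simplest non-trivial positive operator $\sigma - \sigma^0$, which gives a doubling lemma; once this is in hand, dominating the sum-of-monomials expression for $B$ is purely a matter of bookkeeping thresholds.
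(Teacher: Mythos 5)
Your proof is correct, and it takes a genuinely different route from the paper's. Both of you use the same preliminary triangle-inequality bound $|Bz|\leq C\sigma^{n}z$ with $C=\sum_i |b_i|$, but you then diverge. The paper reduces to the case $B:z\mapsto kz$ and runs an induction on $k$, applying (S2) afresh at each inductive step to a new operator $D:z\mapsto A\sigma^{\Delta}z-(k-1)z$; this uses (S2) a total of $k+1$ times, with the shifts accumulating additively. You instead apply (S2) only twice --- once to $A$ (to reduce the problem to lower-bounding $\sigma^{j}z$) and once to the operator $\sigma-\sigma^{0}$ (to get a uniform doubling inequality $\sigma^{\Delta_1+1}z>2z$, which you then iterate to obtain exponential growth of $\sigma^{j}z$). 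This is conceptually cleaner: the exponential estimate you extract from a single application of (S2) to the simplest positive operator replaces the paper's step-by-step linear induction, and incidentally gives a shift of size roughly $\log_2 C$ rather than one accumulated over $C$ rounds. Neither proof tracks such bounds, so this is a stylistic rather than substantive advantage, but the doubling-lemma idea is a nice and self-contained way to package the growth the axioms provide. One trivial nitpick: the parenthetical ``$0<\sigma^{i}z$'' is off when $i=0$ and $0\in R$, but the inequality $|Bz|\leq C\sigma^{n}z$ you actually use survives this edge case, so nothing breaks.
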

% \begin{proof}
%     We prove the lemma assuming $A>_R 0$; the case where $A<_R 0$ is similar. There are $k,\Lambda\in\N$ such that $Bz<k\sigma^\Lambda z$ for all $z\in R$; indeed, if $B=b_n\sigma^n+\cdots+b_0\sigma^0$, then for all $z\in R$ we have
%     \[Bz\leq \abs{Bz}\leq \abs{b_n}\sigma^n z+\cdots +\abs{b_0}\sigma^0 z \leq (\abs{b_n}+\cdots+\abs{b_0})\sigma^n z.\]
%     Thus, it suffices to prove the lemma for $B$ of the form $z\mapsto kz$ for some $k\in\N$. We do so by induction on $k$. When $k=0$ this is true since $A>_R 0$, so suppose $k\geq 1$. By the induction hypothesis, there is $\Delta\in\N$ such that $A\sigma^\Delta z>(k-1)z$ for all $z\in R$. Let $D$ be the operator defined by $z\mapsto A\sigma^\Delta z-(k-1)z$. Then $D>_R 0$, so by (S2) there is $\Lambda\in\N$ such that $D\sigma^{\Lambda}z>z$ for all $z\in R$. But now, for all $z\in R$, $A\sigma^{\Delta+\Lambda}z-(k-1)\sigma^\Lambda z=D\sigma^\Lambda z>z$, whence $A\sigma^{\Delta+\Lambda}z>z+(k-1)\sigma^\Lambda z\geq kz$.
% \end{proof}
\begin{defn}
    Let $\tilde{R}\subseteq R$. For $n,\Delta\in\N$, write
\[\tilde{R}^n_\Delta:=\{(z_1, ..., z_n)\in \tilde{R}^n: z_i\geq \sigma^{\Delta}z_{i+1}\text{ for all }1\leq i\leq n\},\]
where $z_{n+1}:=\min \tilde{R}$.
\end{defn}
\begin{lemma}\label{domination}
    Let $n\in\N^+$, $\textbf{A}$ be an $n$-tuple of operators such that $A_1\neq _R 0$, and $\varepsilon>0$. Then, for all $\Delta\in \N$ sufficiently large and $z\in R^n_\Delta$, we have
    \[(1-\varepsilon)\abs{A_1z_1}<\abs{\textbf{A}\cdot z}<(1+\varepsilon)\abs{A_1z_1}.\]
\end{lemma}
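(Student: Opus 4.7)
The plan is to exploit the strict separation $z_i \geq \sigma^\Delta z_{i+1}$ to show that, when $\Delta$ is sufficiently large, the term $A_1 z_1$ dominates $\textbf{A} \cdot z$, making the tail $A_2 z_2 + \cdots + A_n z_n$ negligible compared to $\abs{A_1 z_1}$. The lemma will then follow from the triangle inequality. (When $n = 1$ the tail is empty and the claim reduces to $\abs{A_1 z_1} > 0$, which holds for $z_1$ large enough: by $A_1 \neq_R 0$ and (S1), $\abs{A_1 z} > 0$ for cofinitely many $z \in R$.)

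First I would uniformly bound the tail in terms of $z_2$. As observed at the start of the proof of Lemma \ref{beatsme}, each operator $A_i = a_{i,d_i} \sigma^{d_i} + \cdots + a_{i,0} \sigma^0$ satisfies $\abs{A_i z} \leq (\abs{a_{i,d_i}} + \cdots + \abs{a_{i,0}}) \sigma^{d_i} z$ for all $z \in R$. Taking $k, \Lambda \in \N$ large enough to dominate all such bounds for $2 \leq i \leq n$, and noting that $z_i \leq z_2$ for $i \geq 2$ (since the $z_j$ are decreasing in $j$), I obtain
\[
\abs{A_2 z_2 + \cdots + A_n z_n} \;\leq\; \sum_{i=2}^n \abs{A_i z_i} \;\leq\; (n-1) k \sigma^\Lambda z_2.
\]

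The main step is the lower bound on $\abs{A_1 z_1}$, obtained via Lemma \ref{beatsme}. I would fix an integer $N \geq (n-1)k/\varepsilon$ and apply Lemma \ref{beatsme} to $A_1$ and $B := N\sigma^\Lambda$, obtaining $\Delta \in \N$ such that $\abs{A_1 \sigma^\Delta w} > N \sigma^\Lambda w$ for all $w \in R$. For $z \in R^n_\Delta$, writing $z_1 = \sigma^m z_2$ with $m \geq \Delta$ and substituting $w := \sigma^{m - \Delta} z_2 \in R$, this gives
\[
\abs{A_1 z_1} \;=\; \abs{A_1 \sigma^\Delta w} \;>\; N \sigma^\Lambda w \;=\; N \sigma^{\Lambda + m - \Delta} z_2 \;\geq\; N \sigma^\Lambda z_2 \;\geq\; \tfrac{(n-1)k}{\varepsilon}\, \sigma^\Lambda z_2 \;\geq\; \tfrac{1}{\varepsilon} \abs{A_2 z_2 + \cdots + A_n z_n},
\]
so $\varepsilon \abs{A_1 z_1} > \abs{A_2 z_2 + \cdots + A_n z_n}$. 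The triangle inequality
\[
\abs{A_1 z_1} - \abs{A_2 z_2 + \cdots + A_n z_n} \;\leq\; \abs{\textbf{A} \cdot z} \;\leq\; \abs{A_1 z_1} + \abs{A_2 z_2 + \cdots + A_n z_n}
\]
then yields the desired $(1 - \varepsilon)\abs{A_1 z_1} < \abs{\textbf{A} \cdot z} < (1 + \varepsilon) \abs{A_1 z_1}$. The only mildly delicate point is that Lemma \ref{beatsme} controls $\abs{A_1 \sigma^\Delta w}$ only for the exact shift $\sigma^\Delta$, while here $z_1 = \sigma^m z_2$ with $m \geq \Delta$ possibly strict; this is handled by absorbing $\sigma^{m - \Delta}$ into $w$ and using that $j \mapsto \sigma^j z_2$ is non-decreasing.
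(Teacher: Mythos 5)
Your proof is correct and follows essentially the same route as the paper: a uniform bound on the tail $\abs{A_2 z_2 + \cdots + A_n z_n}$ exploiting $z_1 \geq \sigma^\Delta z_2$, followed by an application of Lemma \ref{beatsme} to ensure that $\varepsilon\abs{A_1 z_1}$ eventually dominates that bound, and the triangle inequality. The only difference is cosmetic: you keep the tail bound in terms of $\sigma^\Lambda z_2$ and absorb the extra shift into the substitution $w = \sigma^{m-\Delta} z_2$, whereas the paper immediately rewrites the bound as $n\sigma^{\Lambda-\Delta} z_1$ before invoking Lemma \ref{beatsme}; you are also slightly more explicit about the degenerate $n=1$ case.
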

\begin{proof}
    By Lemma \ref{beatsme}, there is $\Lambda\in\N$ such that for all $z_2, ..., z_n\in R$,
    \[\abs{(A_2, ..., A_n)\cdot (z_2, ..., z_n)}\leq \abs{A_2z_2}+\cdots+\abs{A_nz_n}<\sigma^\Lambda z_2 + \cdots + \sigma^\Lambda z_n,\]
    whence for all $\Delta\in \N$ and $z\in R^n_\Delta$, $\abs{(A_2, ..., A_n)\cdot (z_2, ..., z_n)}<n\sigma^{-\Delta+\Lambda}(z_1)$. Thus, by Lemma \ref{beatsme}, if $\Delta\in\N$ is sufficiently large then $\abs{(A_2, ..., A_n)\cdot (z_2, ..., z_n)}<\varepsilon\abs{A_1z_1}$ for all $z\in R^n_\Delta$.
\end{proof}
\begin{lemma}\label{monotone}
    Let $A$ be an operator. If $A>_R 0$ (respectively $A<_R 0$) then there is $r\in \Q_{>1}$ such that $A\sigma z> rAz$ (respectively $A\sigma z< rAz$) for cofinitely many $z\in R$. In particular, the function $R\to R, z\mapsto Az$ is eventually strictly increasing (respectively decreasing).
\end{lemma}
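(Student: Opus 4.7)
I will prove the case $A >_R 0$; the $A <_R 0$ case will follow by applying the result to $-A$, and the strict-monotonicity addendum is immediate because $r > 1$ together with $Az > 0$ gives $rAz > Az$, and $A >_R 0$ together with (S2) applied to $A$ forces $Az \to \infty$ as $z \to \infty$ in $R$.

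The first step is to show $A\sigma - A >_R 0$. By (S1), this operator is in exactly one of the three states; the $=_R 0$ case would make $Az$ eventually constant on $R$, and the $<_R 0$ case would make $A$ eventually strictly decreasing, both contradicting $Az \to \infty$.

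I will then argue by contradiction: suppose no $r \in \Q_{>1}$ satisfies the conclusion. For each pair of positive integers $p > q$, apply (S1) to the operator $qA\sigma - pA$. By hypothesis it cannot be $>_R 0$, and if it were $=_R 0$ then $A\sigma z = (p/q) Az$ cofinitely, in which case any rational $r \in (1, p/q)$ would witness the conclusion (using $Az > 0$ cofinitely), again contradicting the hypothesis. Thus $qA\sigma - pA <_R 0$, giving $A\sigma z < (p/q) Az$ cofinitely. Taking $p = n+1$, $q = n$ and combining with $A\sigma z > Az$ cofinitely from the first step yields $1 < A\sigma z / Az < 1 + 1/n$ cofinitely for each $n$, and so $A\sigma z / Az \to 1$ as $z \to \infty$ in $R$.

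The contradiction will come from Lemma \ref{beatsme}. For each fixed $k$, the telescoping $A\sigma^k z / Az = \prod_{i=0}^{k-1} A\sigma^{i+1} z / A\sigma^i z$ exhibits the ratio as a product of $k$ factors of the form $A\sigma w / Aw$ with $w = \sigma^i z \to \infty$, each tending to $1$; hence $A\sigma^k z / Az \to 1$ for every fixed $k$. But applying Lemma \ref{beatsme} with $B := 2A$ yields some $\Delta \in \N$ with $|A\sigma^\Delta z| > 2Az$ for all $z \in R$, and since $A\sigma^\Delta z > 0$ cofinitely (because $A >_R 0$), this gives $A\sigma^\Delta z / Az > 2$ cofinitely. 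Taking $k = \Delta$ produces the desired contradiction. The trickiest point to get right will be the (S1)-trichotomy argument in the contradiction setup, in particular the $=_R 0$ subcase where one must carefully extract a rational witness strictly between $1$ and $p/q$.
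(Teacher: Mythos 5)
Your proof is correct, but it takes a genuinely different route from the paper's. Both proofs invoke Lemma~\ref{beatsme} to produce $\Delta$ with $A\sigma^\Delta z > 2Az$, and both invoke the (S1)-trichotomy for the operator $qA\sigma - pA$, but you wrap these ingredients in a proof by contradiction with a limiting argument, whereas the paper argues directly. Concretely, the paper first fixes $\Delta$ from Lemma~\ref{beatsme}, \emph{then} chooses a rational $r > 1$ with $r^\Delta < 2$, and shows $qA\sigma - pA >_R 0$ (where $r = p/q$) by observing that $\leq_R 0$ would telescope to $A\sigma^\Delta z \leq r^\Delta Az < 2Az$ cofinitely, contradicting the choice of $\Delta$. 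Your proof instead assumes no witnessing $r$ exists, deduces $A\sigma z / Az \to 1$ and hence $A\sigma^k z / Az \to 1$ for every fixed $k$ via telescoping the ratio, and contradicts the beatsme-given $\Delta$ with $A\sigma^\Delta z / Az > 2$. Mathematically the telescoping is the same; what the paper buys by choosing $r$ \emph{after} $\Delta$ is that it never needs to pass to real limits or to the preliminary step $A\sigma - A >_R 0$ --- the argument stays discrete and is about half the length. Your preliminary step is, incidentally, not strictly necessary: the upper bound $A\sigma z < (1+1/n)Az$ alone gives $A\sigma^\Delta z < (1+1/n)^\Delta Az$, and choosing $n$ with $(1+1/n)^\Delta < 2$ already yields the contradiction without needing the full two-sided limit. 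Both proofs are valid; the paper's is the more economical.
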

\begin{proof}
    We prove the lemma assuming $A>_R 0$; the case where $A<_R 0$ is similar. By Lemma \ref{beatsme}, there is $\Delta\in\N$ such that $A\sigma^\Delta z>2Az$ for all $z\in R$. Fix $r\in \Q_{>1}$ such that $r^\Delta<2$; write $r=p/q$ for $p,q\in\N^+$. Let $B$ be the operator defined by $Bz = qA\sigma z-pAz$. If $B\leq_R 0$ then $A\sigma z\leq rAz$ for cofinitely many $z\in R$, whence $A\sigma^\Delta z\leq r^\Delta Az<2Az$ for cofinitely many $z\in R$, a contradiction. By (S1), we must thus have that $B>_R 0$, whence $A\sigma z>rAz$ for cofinitely many $z\in R$.
    % We prove the lemma assuming $A>_R 0$; the case where $A<_R 0$ is similar. Then there is $\Delta\in\N$ such that $A\sigma^\Delta z>z$ for all $z\in R$. Let $B$ be the operator defined by $Bz = A\sigma z-Az$. If $B\leq_R 0$ then $A\sigma z-Az\leq 0$ for cofinitely many $z\in R$, whence $A\sigma^\Delta z-Az\leq 0$ for cofinitely many $z\in R$, a contradiction. By the definition of sparsity, we must thus have that $B>_R 0$, whence $A\sigma z-Az>0$ for cofinitely many $z\in R$.
\end{proof}
% \begin{lemma}
%     Let $n\in\N^+$, $\textbf{A}$ be an $n$-tuple of operators such that $A_1\neq _R 0$. Then there is an $R$-valued $\mathcal{L}$-definable function $f_\textbf{A}$ of arity 1, such that for all sufficiently large $\Delta\in\N$, the following holds for all $z\in R^n_\Delta$.
%     \begin{enumerate}[(i)]
%         \item Suppose $A_1>_R 0$. Then, for all $y\in \Z$, $\textbf{A}\cdot z<y$ if $z_1<f_\textbf{A}(y)$, and $\textbf{A}\cdot z>y$ if $z_1>\sigma f_\textbf{A}(y)$.
%         \item Suppose $A_1<_R 0$. Then, for all $y\in\Z$, $\textbf{A}\cdot z>y$ if $z_1<f_\textbf{A}(y)$, and $\textbf{A}\cdot z<y$ if $z_1>\sigma f_\textbf{A}(y)$.
%     \end{enumerate}
% \end{lemma}
% \begin{proof}
%     We will prove (i); the proof of (ii) is similar. Suppose $A_1>0$. For some $\Gamma\in\N$ to be determined, let $f_\textbf{A}(y)=P_\Gamma(y;A_1)$. Let $r\in\mathbb{Q}_{>1}$ be as in Lemma \ref{monotone} (applied to the operator $A_1>0$), and assume $\Gamma$ is sufficiently large that $A\sigma z>r Az$ for all $z\geq \sigma^{\Gamma-2}(\min R)$.

%     Let $\Delta\geq \Gamma$, and let $z\in R^n_\Delta$. If $f_\textbf{A}(y)=$
% \end{proof}
Here and henceforth, given an $n$-tuple $\nu=(\nu_1,...,\nu_n)$ and $1\leq i\leq n$, we let $\nu_{>i}$ denote $(\nu_{i+1},...,\nu_n)$, $\nu_{\geq i}$ denote $(\nu_i,...,\nu_n)$, and so on.
\begin{lemma}\label{ordering}
    Let $n\in\N^+$, $\textbf{A}$ be an $n$-tuple of operators, and $\Delta\in \N$ be sufficiently large.
    
    Let $z,w\in R^n_\Delta$ be such that $i:=\min\{1\leq e\leq n: z_e\neq w_e, A_e\neq 0\}$ is well-defined, and suppose $z_i>w_i$. Then $\textbf{A}\cdot z>\textbf{A}\cdot w$ if $A_i>0$, and $\textbf{A}\cdot z<\textbf{A}\cdot w$ if $A_i<0$.
    
    In particular, if $\textbf{A}$ is a tuple of non-zero operators, then $z\mapsto \textbf{A}\cdot z$ defines an injective function on $R^n_\Delta$.
\end{lemma}
\begin{proof}
    We prove this assuming $A_i>0$; the case where $A_i<0$ is similar. By Lemma \ref{monotone}, there is $r\in\Q_{>1}$ such that $A_i\sigma x>rA_ix$ for sufficiently large $x\in R$, say for $x\geq \sigma^\Delta(\min R)$, taking $\Delta\in\N$ to be sufficiently large. Let $k\in\N^+$ be such that $r>1+1/k$. By Lemma \ref{domination}, taking $\Delta\in\N$ to be sufficiently large, we have
    \[\textbf{A}_{\geq i}\cdot z_{\geq i}>\left(1-\frac{1}{4k}\right)A_iz_i>\left(1-\frac{1}{4k}\right)\left(1+\frac{1}{k}\right)A_iw_i\geq \left(1+\frac{1}{2k}\right)A_iw_i>\textbf{A}_{\geq i}\cdot w_{\geq i},\]
    where the second inequality is due to the fact that $A_i\sigma x>rA_ix$ for $x\geq \sigma^\Delta(\min R)$, and $w_i\geq \sigma^{n\Delta}(\min R)$ since $w\in R^n_\Delta$. But now
    \[\textbf{A}\cdot z=\textbf{A}_{<i}\cdot z_{<i}+\textbf{A}_{\geq i}\cdot z_{\geq i}=\textbf{A}_{<i}\cdot w_{<i}+\textbf{A}_{\geq i}\cdot z_{\geq i}>\textbf{A}_{<i}\cdot w_{<i}+\textbf{A}_{\geq i}\cdot w_{\geq i}=\textbf{A}\cdot w.\qedhere\]
\end{proof}
\begin{remark}\label{pantelisfavouriteremark}
    In this paper, we frequently consider tuples $z\in R^n_\Delta$ for some sufficiently large $\Delta\in\N$ rather than $z\in R^n$. The reason for this is that, as shown in the preceding lemmas, $R^n_\Delta$ is much better-behaved than $R^n$. We illustrate this by considering Lemma \ref{ordering} for the sparse predicate $R=\twoN$.
    
    As shown in Example \ref{dNoperator}, in this context an operator is simply multiplication by a constant, so let us consider the 3-tuple of operators $\textbf{A}=(1,2,4)$, where $4$ denotes multiplication by $4$, and so on. Lemma \ref{ordering} says, if $\Delta\in\N$ is sufficiently large, then the function $z\mapsto \textbf{A}\cdot z$ is injective on $(\twoN)^3_\Delta$. In other words, if $x=z_1+2z_2+4z_3$ for some $z\in (\twoN)^3_\Delta$, then we can read off $z_1$, $z_2$, and $z_3$ uniquely from $x$. The following is an example to illustrate the \textit{necessity} of $\Delta$ being sufficiently large:
    \[96=1(32)+2(16)+4(8)=1(64)+2(8)+4(4).\]
    On the other hand, the \textit{sufficiency} of $\Delta$ being sufficiently large ($\Delta\geq 2$) is clear from the uniqueness of binary expansions, and indeed this is a special case of Lemma \ref{ordering}.
\end{remark}
\subsection{The $P_\Delta(\cdot;\textbf{A},\tilde{R})$ and $Q_\Delta(\cdot;\textbf{A},\tilde{R})$ functions}
In this subsection, we introduce two functions that are crucial for the rest of the paper. Throughout this subsection, fix a sparse predicate $R\subseteq\N$, enumerated by the increasing sequence $(r_n: n\in\N)$.
\begin{defn}
    Let $d\in\N^+$, and let $\tilde{R}\subseteq R$ be definable in $(\Z,<,+,R)$. Write $\tilde{R}\subseteq^d R$ if there is $N\subseteq\N$ such that
    \[\tilde{R}:=\{r_{N+dt}: t\in\N\}.\]
\end{defn}
This definition is motivated by the following lemma.
\begin{lemma}\label{tildemotivate}
    Let $m,d\in\N^+$, and suppose $R$ is eventually periodic mod $m$ with minimum period $d$. Then, for all $N\in\N$, the set $\tilde{R}:=\{r_{N+dt}: t\in\N\}\subseteq R$ is definable in $(\Z,<,+,R)$, and thus $\tilde{R}\subseteq^d R$.
\end{lemma}
\begin{proof}
    Up to excluding finitely many elements from $\tilde{R}$ (which does not affect the definability of $\tilde{R}$), we may assume that $(r_n: n\geq N)$ is periodic mod $m$. Then, for $z\in R$,
    \[z\in\tilde{R}\Leftrightarrow z\geq r_N \wedge \bigwedge_{p=0}^{d-1} \sigma^p z\equiv \sigma^p r_N \mod m,\]
    and so $\tilde{R}$ is definable in $(\Z,<,+,R)$.
\end{proof}
\begin{defn}\label{defnmM}
Let $n\in\N^+$, $\textbf{A}$ be an $n$-tuple of non-zero operators, and $\Delta\in\N$ be sufficiently large such that the function $z\mapsto \textbf{A}\cdot z$ is injective on $R^n_\Delta$. For $S\subseteq R^n_\Delta$, write $\textbf{A}\cdot S:=\{\textbf{A}\cdot z: z\in S\}$. For $\emptyset\neq S\subseteq R^n_\Delta$ such that $\textbf{A}\cdot S$ is bounded below, let
    \[\min_{\textbf{A}}S:= \text{ the unique }z\in S\text{ such that }\textbf{A}\cdot z=\min \textbf{A}\cdot S.\]
    Similarly, for $\emptyset\neq S\subseteq R^n_\Delta$ such that $\textbf{A}\cdot S$ is bounded above, let
    \[\max_{\textbf{A}}S:= \text{ the unique }z\in S\text{ such that }\textbf{A}\cdot z=\max \textbf{A}\cdot S.\]
\end{defn}
\begin{defn}\label{PDeltadefn}
    Let $d,n\in\N^+$, $\tilde{R}\subseteq^d R$, $\textbf{A}$ be an $n$-tuple of non-zero operators, and $\Delta\in d\N$ be sufficiently large such that the function $z\mapsto \textbf{A}\cdot z$ is injective on $R^n_\Delta$. For $x\in\Z$, let
    \begin{align*}
        P_\Delta(x;\textbf{A},\tilde{R})&:=\begin{cases}
        \max_{\textbf{A}} \{z\in \tilde{R}^n_\Delta: \textbf{A}\cdot z<x\}&\text{if }x>\inf \textbf{A}\cdot \tilde{R}^n_\Delta,\\
        \min_{\textbf{A}} \tilde{R}^n_\Delta &\text{otherwise},
    \end{cases}\\
        Q_\Delta(x;\textbf{A},\tilde{R})&:=\begin{cases}
        \min_{\textbf{A}} \{z\in \tilde{R}^n_\Delta: \textbf{A}\cdot z\geq x\}&\text{if }x\leq \sup \textbf{A}\cdot \tilde{R}^n_\Delta,\\
        \max_{\textbf{A}} \tilde{R}^n_\Delta&\text{otherwise}.
    \end{cases}
    \end{align*}
    For $1\leq i\leq n$, write $P^i_\Delta(x;\textbf{A},\tilde{R})$ for $P_\Delta(x;\textbf{A},\tilde{R})_i$ and $Q^i_\Delta(x;\textbf{A},\tilde{R})$ for $Q_\Delta(x;\textbf{A},\tilde{R})_i$. The parameter $\tilde{R}$ is dropped where obvious from context.
\end{defn}
\begin{remark}\label{Premark}
    \begin{enumerate}[(i)]
        \item In other words, if $x>\inf \textbf{A}\cdot \tilde{R}^n_\Delta$, then $P_\Delta(x;\textbf{A},\tilde{R})$ is the element $z\in \tilde{R}^n_\Delta$ maximising $\textbf{A}\cdot z$ subject to $\textbf{A}\cdot z<x$. Similarly, if $x\leq \sup \textbf{A}\cdot \tilde{R}^n_\Delta$, then $Q_\Delta(x;\textbf{A},\tilde{R})$ is the element $z\in \tilde{R}^n_\Delta$ minimising $\textbf{A}\cdot z$ subject to $\textbf{A}\cdot z\geq x$.
        \item If $x\leq \inf \textbf{A}\cdot \tilde{R}^n_\Delta$, then $A_1>_R 0$ (as otherwise $\inf \textbf{A}\cdot \tilde{R}^n_\Delta=-\infty$). In this case, by Lemma \ref{ordering}, $P_\Delta(x;\textbf{A},\tilde{R})=\min_{\textbf{A}} \tilde{R}^n_\Delta$ is the lexicographically minimal element of $\tilde{R}^n_\Delta$, namely,
        \[(\sigma^{(n-i)\Delta}(\min \tilde{R}): 0\leq i<n).\]
        Similarly, if $x>\sup \textbf{A}\cdot \tilde{R}^n_\Delta$, then $A_1<_R 0$ and
        \[Q_\Delta(x;\textbf{A},\tilde{R})=\max_{\textbf{A}} \tilde{R}^n_\Delta=(\sigma^{(n-i)\Delta}(\min \tilde{R}): 0\leq i<n).\]
    \end{enumerate}
\end{remark}
\begin{example}
    As in Remark \ref{pantelisfavouriteremark}, consider the example $R=\twoN$ and $\textbf{A}=(1,2,4)$. Let $\Delta=2$; it is easy to verify that $z\mapsto \textbf{A}\cdot z$ is injective on $R^3_2$. The first four elements of $\textbf{A}\cdot R^3_2$ are
    \[1(16)+2(4)+4(1)=28, 1(32)+2(4)+4(1)=44, 1(32)+2(8)+4(1)=52, 1(32)+2(8)+4(2)=56.\]
    Since $44<47\leq 52$, we have $P_2(47;\textbf{A},R)=(32,4,1)$ and $Q_2(47;\textbf{A},R)=(32,8,1)$. Furthermore, for all $x\leq 28=\inf \textbf{A}\cdot R^3_2$, we have $P_2(x;\textbf{A},R)=(16,4,1)$.
\end{example}
The following lemma establishes basic properties of $P_\Delta(\cdot;\textbf{A},\tilde{R})$ and $Q_\Delta(\cdot;\textbf{A},\tilde{R})$. The proofs are rather straightforward but we include them to provide more intuition on these functions.
\begin{lemma}\label{Plemma}
    Let $\tilde{R}\subseteq^d R$ for some $d\in\N^+$. Let $n\in\N^+$, $\textbf{A}$ be an $n$-tuple of non-zero operators, and $\Delta\in d\N$ be sufficiently large. Then the following hold.
    \begin{enumerate}[(i)]
        \item For all $x\in\Z$, $x>\textbf{A}\cdot P_\Delta(x;\textbf{A},\tilde{R})$ if and only if $x>\inf \textbf{A}\cdot \tilde{R}^n_\Delta$, and $x\leq\textbf{A}\cdot Q_\Delta(x;\textbf{A},\tilde{R})$ if and only if $x\leq\sup \textbf{A}\cdot \tilde{R}^n_\Delta$.
        \item For all $x\in\Z$, $Q^1_\Delta(x;\textbf{A},\tilde{R})=\sigma^{\varepsilon d} P^1_\Delta(x;\textbf{A},\tilde{R})$ for some $\varepsilon\in\{-1,0,1\}$.
    \end{enumerate}
\end{lemma}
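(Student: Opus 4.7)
The proof of (i) is a direct unwinding of the definition. The key observation is that $\inf \textbf{A}\cdot \tilde{R}^n_\Delta$ equals $\textbf{A}\cdot \min_{\textbf{A}}\tilde{R}^n_\Delta$ when $A_1 >_R 0$ and equals $-\infty$ when $A_1 <_R 0$ (the latter by Lemma \ref{monotone}, since $A_1 z_1 \to -\infty$ as $z_1$ grows along $\tilde{R}$). So when the upper branch of $P_\Delta$ is taken we have $\textbf{A}\cdot P_\Delta(x) < x$ by construction, while in the lower branch $\textbf{A}\cdot P_\Delta(x) = \inf \textbf{A}\cdot \tilde{R}^n_\Delta \geq x$; these together give (i) for $P_\Delta$, and the argument for $Q_\Delta$ is symmetric.

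For (ii), the central case is $\textbf{A}\cdot P_\Delta(x) < x \leq \textbf{A}\cdot Q_\Delta(x)$, where $P := P_\Delta(x)$ and $Q := Q_\Delta(x)$ are consecutive in the $\textbf{A}$-ordering of $\tilde{R}^n_\Delta$. My plan is to show that consecutive elements of $\tilde{R}^n_\Delta$ have first coordinates differing by at most one step of $\sigma^d$. Assume $A_1 >_R 0$; the case $A_1 <_R 0$ is symmetric and produces $\varepsilon = -1$ in place of $\varepsilon = 1$. By Lemma \ref{ordering}, the $\textbf{A}$-ordering on $\tilde{R}^n_\Delta$ is sign-twisted lexicographic with the first coordinate most significant, so $P^1 \leq Q^1$. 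Suppose for contradiction that $Q^1 \geq \sigma^{2d}P^1$. I will exhibit an intermediate witness $v \in \tilde{R}^n_\Delta$ with $\textbf{A}\cdot P < \textbf{A}\cdot v < \textbf{A}\cdot Q$: set $v_1 := \sigma^d P^1$ and $v_i := \sigma^{(n+1-i)\Delta}(\min\tilde{R})$ for $2 \leq i \leq n$. A direct check, using $P^1 \geq \sigma^{n\Delta}(\min\tilde{R})$ (which follows by iterating the defining inequalities of $\tilde{R}^n_\Delta$), confirms $v \in \tilde{R}^n_\Delta$, and since $P^1 < v_1 < Q^1$, Lemma \ref{ordering} yields $\textbf{A}\cdot P < \textbf{A}\cdot v < \textbf{A}\cdot Q$. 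Then $v$ contradicts either the maximality of $P$ among $\{z : \textbf{A}\cdot z < x\}$ (if $\textbf{A}\cdot v < x$) or the minimality of $Q$ among $\{z : \textbf{A}\cdot z \geq x\}$ (if $\textbf{A}\cdot v \geq x$), so $Q^1 \in \{P^1, \sigma^d P^1\}$.

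It remains to dispatch the degenerate range cases. When $A_1 >_R 0$ and $x \leq \inf \textbf{A}\cdot \tilde{R}^n_\Delta$, $\sup \textbf{A}\cdot \tilde{R}^n_\Delta = +\infty$, so both $P$ and $Q$ collapse to $\min_{\textbf{A}}\tilde{R}^n_\Delta$, giving $\varepsilon = 0$; the symmetric case under $A_1 <_R 0$ is analogous. The main obstacle is not conceptual — Lemma \ref{ordering} already tells us the $\textbf{A}$-ordering is structurally a lex ordering — but rather the bookkeeping needed to verify that the auxiliary tuple $v$ lies in $\tilde{R}^n_\Delta$ and to keep the two sign cases in lockstep with the degenerate range cases. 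Once those are in place, the argument reduces to the elementary observation that consecutive elements of a lex-ordered set move minimally in their most significant coordinate.
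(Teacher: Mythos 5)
Your proof is correct and follows essentially the same path as the paper: part (i) is a direct unwinding of the branch structure in Definition \ref{PDeltadefn} (using only that $P_\Delta(x;\textbf{A})\in\tilde{R}^n_\Delta$ in the degenerate branch, so the side remarks about $\inf=-\infty$ when $A_1<_R 0$ aren't actually needed), and part (ii) isolates the non-degenerate case $\inf\textbf{A}\cdot\tilde{R}^n_\Delta<x\leq\sup\textbf{A}\cdot\tilde{R}^n_\Delta$ and uses that $P_\Delta(x;\textbf{A})$ and $Q_\Delta(x;\textbf{A})$ are consecutive in the $\textbf{A}$-ordering, which Lemma \ref{ordering} identifies with a sign-twisted lexicographic order. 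Your explicit construction of the intermediate witness $v=(\sigma^d P^1,\sigma^{(n-1)\Delta}(\min\tilde{R}),\dots,\sigma^\Delta(\min\tilde{R}))$, together with the verification $v\in\tilde{R}^n_\Delta$, fills in the deduction that the paper compresses into ``the statement now follows from Lemma \ref{ordering}''; this is a more detailed rendering of the same idea rather than a different route.
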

\begin{proof}
    We first prove (i). If $x\leq \inf \textbf{A}\cdot \tilde{R}^n_\Delta$, then $x\leq \textbf{A}\cdot P_\Delta(x;\textbf{A})$ since $P_\Delta(x;\textbf{A})\in \tilde{R}^n_\Delta$. If $x> \inf \textbf{A}\cdot \tilde{R}^n_\Delta$, then by definition $P_\Delta(x;\textbf{A})\in\{z\in \tilde{R}^n_\Delta: \textbf{A}\cdot z<x\}$, so $x>\textbf{A}\cdot P_\Delta(x;\textbf{A})$. The corresponding statement for $Q_\Delta(\cdot;\textbf{A})$ can be proven similarly.
    
    We now prove (ii). If $x\leq \inf \textbf{A}\cdot \tilde{R}^n_\Delta$, then
    \[Q_\Delta(x;\textbf{A})=\min_{\textbf{A}}\{z\in \tilde{R}^n_\Delta: \textbf{A}\cdot z\geq x\}=\min_{\textbf{A}} \tilde{R}^n_\Delta = P_\Delta(x;\textbf{A}).\]
    Similarly, if $x> \sup \textbf{A}\cdot \tilde{R}^n_\Delta$, then $Q_\Delta(x;\textbf{A})=P_\Delta(x;\textbf{A})$, so consider the case where $\inf \textbf{A}\cdot \tilde{R}^n_\Delta<x\leq \sup \textbf{A}\cdot \tilde{R}^n_\Delta$. Then by definition and part (i) we have that $\textbf{A}\cdot P_\Delta(x;\textbf{A})<x\leq \textbf{A}\cdot Q_\Delta(x;\textbf{A})$, and there is no $z\in \tilde{R}^n_\Delta$ such that $\textbf{A}\cdot P_\Delta(x;\textbf{A})<\textbf{A}\cdot z< \textbf{A}\cdot Q_\Delta(x;\textbf{A})$. The statement now follows from Lemma \ref{ordering}.
\end{proof}
% \begin{lemma}\label{Plemmasingle}
%     Let $\tilde{R}\subseteq^d R$ for some $d\in\N^+$. Let $A>_R 0$ be an operator and $\Delta\in d\N$ be sufficiently large. Then for all $x\in\Z$, $x< A\sigma P_\Delta(x;A,\tilde{R})$.
% \end{lemma}
% \begin{proof}
%      Let $x\in\Z$. If $x\leq \inf A\cdot \tilde{R}^1_\Delta$, then $x\leq A\sigma P_\Delta(x;A)$ since $\sigma P_\Delta(x;A)\in \tilde{R}^1_\Delta$. If $x> \inf A\cdot \tilde{R}^1_\Delta$, then by definition there is no $z\in \tilde{R}^1_\Delta$ such that $AP_\Delta(x;A)<Az<x$. Since $AP_\Delta(x;A)<A\sigma P_\Delta(x;A)$, it follows that $x\leq A\sigma P_\Delta(x;A)$.
% \end{proof}
% \begin{lemma}\label{finiteinterest}
%     Let $\tilde{R}\subseteq^d R$ for some $d\in\N^+$. Let $\textbf{A}$ be an $n$-tuple of operators such that $A_1>_R 0$ (respectively $A_1<_R 0$), and let $\Delta\in d\N$ be sufficiently large. Then, for $y\in\Z$ and $z\in \tilde{R}^n_\Delta$, if $z_1< P^1_\Delta(y;\textbf{A},\tilde{R})$ then $\textbf{A}\cdot z<y$ (respectively $\textbf{A}\cdot z>y$), and if $z_1> P^1_\Delta(y;\textbf{A},\tilde{R})$ then $\textbf{A}\cdot z>y$ (respectively $\textbf{A}\cdot z<y$).
% \end{lemma}
% \begin{proof}
%     This is a direct consequence of Lemma \ref{ordering}.
% \end{proof}
\subsection{Sparsity as regularity}\label{morecanbesaid}
We conclude this section by proving that the notion of a sparse predicate coincides with that of a \textit{regular} predicate, defined by Lambotte and Point in \cite{regularnip} and recalled below.
\begin{defn}[\cite{regularnip}]
    Let $R\subseteq \N$ be enumerated by the increasing sequence $(r_n: n\in\N)$. Say that $R$ is \textit{regular} if $r_{n+1}/r_n\to \theta\in \R_{>1}\cup\{\infty\}$ and, if $\theta$ is algebraic over $\Q$ with minimal polynomial $f(x)$, then the operator $f(\sigma)=_R 0$, that is, if $f(x)=\sum_{i=0}^k a_i x^i$ then for all $n\in\N$ we have
    \[\sum_{i=0}^k a_ir_{n+i}=0.\]
\end{defn}
Lambotte and Point prove that regular predicates are sparse \cite[Lemma~2.26]{regularnip}. It turns out that these notions coincide.
\begin{theorem}\label{sparseisregular}
    Let $R\subseteq\N$. Then $R$ is sparse if and only if $R$ is regular.
\end{theorem}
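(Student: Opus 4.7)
The Lambotte--Point direction (regular implies sparse) is already cited, so the task is to prove that a sparse predicate $R = (r_n : n \in \N)$ is regular. There are two things to verify: (a) the ratios $r_{n+1}/r_n$ converge to some $\theta \in \R_{>1} \cup \{\infty\}$, and (b) if $\theta$ is algebraic with minimal polynomial $f \in \Q[x]$, then $f(\sigma) =_R 0$.

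For (a), my plan is to exploit trichotomy (S1) on the family of operators $A_{p,q} := q\sigma^1 - p\sigma^0$ for $p,q \in \Z$ with $q > 0$. For each such pair, exactly one of the following holds: $r_{n+1}/r_n = p/q$ for all $n$, or $r_{n+1}/r_n > p/q$ cofinitely, or $r_{n+1}/r_n < p/q$ cofinitely. A standard squeeze argument then shows $\liminf = \limsup$: if they were distinct, any rational $p/q$ strictly between would make both $r_{n+1}/r_n > p/q$ and $r_{n+1}/r_n < p/q$ occur infinitely often, contradicting (S1). So $r_{n+1}/r_n \to \theta$ exists in $\R_{\geq 1} \cup \{\infty\}$. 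To rule out $\theta = 1$, I apply Lemma \ref{monotone} to the operator $\sigma$ (which is $>_R 0$ since $R \subseteq \N$ is strictly increasing) to obtain some rational $r > 1$ with $r_{n+2} > r \cdot r_{n+1}$ cofinitely, forcing $\theta \geq r > 1$.

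For (b), suppose $\theta$ is algebraic over $\Q$; after clearing denominators let $f(x) = \sum_{i=0}^k a_i x^i \in \Z[x]$ be a nonzero integer scalar multiple of its minimal polynomial, so $f(\theta) = 0$ and the operator $f(\sigma) = \sum a_i \sigma^i$ is well-defined. Apply (S1): either $f(\sigma) =_R 0$ (which is the desired conclusion), or WLOG $f(\sigma) >_R 0$. In the latter case, (S2) furnishes $\Delta \in \N$ with $f(\sigma)\sigma^\Delta r_n > r_n$ for all $n$. Since polynomials in $\sigma$ commute, $f(\sigma)\sigma^\Delta r_n = f(\sigma) r_{n+\Delta}$, so dividing by $r_n$ gives
\[
\frac{f(\sigma) r_{n+\Delta}}{r_n} = \frac{f(\sigma) r_{n+\Delta}}{r_{n+\Delta}} \cdot \frac{r_{n+\Delta}}{r_n} = \left(\sum_{i=0}^k a_i \frac{r_{n+\Delta+i}}{r_{n+\Delta}}\right) \cdot \frac{r_{n+\Delta}}{r_n}.
\]
By part (a), the first factor tends to $\sum a_i \theta^i = f(\theta) = 0$ and the second tends to $\theta^\Delta < \infty$ (finite because $\theta$ is algebraic hence not $\infty$). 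Thus the ratio tends to $0$, contradicting its being $> 1$ for all $n$. Hence $f(\sigma) =_R 0$, as required.

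The main obstacle I anticipate is purely expositional: handling the degenerate cases cleanly (e.g., $\theta = \infty$ or $\theta$ transcendental render condition (b) vacuous) and ensuring that clearing denominators of the minimal polynomial does not accidentally introduce a spurious root. Conceptually the proof is a direct application of the trichotomy (S1) for the existence of the limit and of the domination clause (S2) for the algebraic relation, with the key quantitative input being that $r_{n+i}/r_n \to \theta^i$ follows from (a) by a telescoping product.
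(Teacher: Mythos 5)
Your proposal is correct and follows essentially the same route as the paper's proof: the squeeze argument via trichotomy (S1) on the operators $q\sigma - p$ to establish convergence of $r_{n+1}/r_n$, an application of Lemma \ref{monotone} to rule out $\theta = 1$ (the paper applies it to the identity rather than $\sigma$, an inessential difference), and the same (S2)-driven contradiction for the algebraic case, splitting the ratio as $\frac{g(\sigma)r_{n+\Delta}}{r_{n+\Delta}}\cdot\frac{r_{n+\Delta}}{r_n}\to 0\cdot\theta^\Delta = 0$. The denominator-clearing remark is sound but unproblematic, since scalar multiples preserve both the root set and the sign of $f(\sigma)$ on $R$.
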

\begin{proof}
    It suffices to prove the forward direction. Let $R$ be a sparse predicate, enumerated by the increasing sequence $(r_n: n\in\N)$. If $\liminf_{n\to\infty} r_{n+1}/r_n\neq \limsup_{n\to\infty} r_{n+1}/r_n$, then there is $p\in\Q_{>1}$ such that $\{n\in\N: r_{n+1}/r_n>p\}$ and $\{n\in\N: r_{n+1}/r_n<p\}$ are both infinite. But now, writing $p=a/b$ for $a,b\in\N^+$, the operator $A$ given by $z\mapsto b\sigma z-az$ satisfies that $Az>0$ for infinitely many $z\in R$ and $Az<0$ for infinitely many $z\in R$, a contradiction to (S1).
    
    Thus, $r_{n+1}/r_n\to \theta$ for some $\theta\in \R_{\geq 1}\cup\{\infty\}$. By Lemma \ref{monotone} applied to the identity operator, there is $q\in\Q_{>1}$ such that $r_{n+1}/r_n>q$ for all sufficiently large $n$, so $\theta\neq 1$. Suppose $\theta$ is algebraic over $\Q$ with minimum polynomial $f(x)=\sum_{i=0}^k a_i x^i$. Towards a contradiction, suppose $f(\sigma)\neq_R 0$. Let $g:=f$ if $f(\sigma)>_R 0$, and $g:=-f$ if $f(\sigma)<_R 0$. Then, $g(\sigma)>_R 0$, so by (S2), there is $\Delta\in\N$ such that $g(\sigma)r_{n+\Delta}>r_n$ for all $n\in\N$. But
    \[\frac{g(\sigma)r_n}{r_n}= \pm\sum_{i=0}^k a_i \left(\frac{r_{n+i}}{r_n}\right)\to \pm\sum_{i=0}^k a_i \theta^i= 0,\]
    and $r_{n+\Delta}/r_n\to \theta^\Delta$, so
    \[\frac{g(\sigma)r_{n+\Delta}}{r_n}=\frac{g(\sigma)r_{n+\Delta}}{r_{n+\Delta}}\frac{r_{n+\Delta}}{r_n}\to 0,\]
    contradicting the fact that $g(\sigma)r_{n+\Delta}>r_n$ for all $n\in\N$.
\end{proof}
We find that the notion of regularity gives better intuition for what a sparse/regular predicate looks like, so this is a pleasant result.
\section{Reduction to representative formulas}\label{sectionreduction}
The goal of this section is to find formulas for which constructing strong honest definitions is sufficient for the distality of the structure; this is achieved in Theorem \ref{sufficiency}.\\

\textbf{For the rest of the paper, fix a congruence-periodic sparse predicate $R\subseteq\N$.} (Recall that $R$ is \textit{congruence-periodic} if, for all $m\in\N^+$, $R$ is eventually periodic mod $m$.)\\

Let $\mathcal{L}^0:=(<,+)$ and $\mathcal{L}:=(<,+,R)$.
% Let $m\in\N^+$. Recalling that $R$ is congruence periodic, let $d$ be the eventual period of $R$ mod $m$. For $R'\subseteq R$, write $R'\subseteq^m R$ if there are $c_0, ..., c_{d-1}\in\N$ such that
% \[R'=\{z\in R: \sigma^{i}z\equiv c_i \text{ (mod }m)\text{ for all }0\leq i< d\}.\]
% In this case, if $R'$ is infinite then there is a finite subset $X\subseteq \N$ and $N\in\N$ such that
% \[R'=X\cup \{r_{N+dt}: t\in\N\}.\]
% It is an easy exercise to check that $R'$ is also a congruence-periodic sparse predicate.
\begin{defn}
    Let $\phi(x;y)$ be an $\mathcal{L}$-formula with $\abs{x}=1$. Say that $\phi(x;y)$ is a \textit{basic} formula if it is a Boolean combination of formulas not involving $x$ and descendants of $\mathcal{L}^0$-formulas.
\end{defn}
Note that basic formulas have strong honest definitions by Example \ref{LOexample}, Lemma \ref{descendants}, and the fact that formulas not involving $x$ have $\top$ as a strong honest definition.

For $n\in\N^+$ and $1\leq i\leq n$, let $\textbf{F}^i$ be the `$i^{\text{th}}$ standard $n$-tuple of operators' (where $n$ is assumed to be obvious from context): for $1\leq j\leq n$,
\[\textbf{F}^i_j=\begin{cases}
    \text{the identity function}&\text{if }j=i,\\
    0&\text{if }j\neq i.
\end{cases}\]
\begin{defn}\label{defnformulas}
    Let $d,n\in\N^+$, $\tilde{R}\subseteq^d R$, and $\phi(x;...)$ be an $\mathcal{L}$-formula with $\abs{x}=1$.
    
    Let $y$ be a tuple of variables. Say that $\phi=\phi(x;y)$ is of the form $(E_n; \tilde{R})$, or just $(E_n)$, if
    \[\phi(x;y)=\exists z\in \tilde{R}^n_0 \bigwedge_{j=1}^k f_j(x,y)>\textbf{A}^{(j)}\cdot z,\]
    where $f_1(x,y), ..., f_k(x,y)$ are $\Z$-affine functions, and $\textbf{A}^{(1)}, ..., \textbf{A}^{(k)}$ are $n$-tuples of operators. 

    Let $\Delta\in d\N$, $y_1, y_2$ be singleton variables, and $\textbf{A},\textbf{B}$ be $n$-tuples of operators.
    
    Say that $\phi=\phi(x;y_1,y_2)$ is of the form $(F_n; \textbf{A}, \textbf{B}, \tilde{R}, \Delta)$ if $\textbf{A}$ is a tuple of non-zero operators and
    \[\phi(x;y_1,y_2)=tx-y_2<\textbf{B}\cdot P_\Delta(x-y_1;\textbf{A},\tilde{R}),\]
    where $t\in\{0,1\}$ with $t=1$ unless $\textbf{B}=\textbf{F}^i$ for some $1\leq i\leq n$.

    Let $u,v$ be $n$-tuples of variables, and let $T_{\tilde{R}}(u,v)$ be the formula $u_1, v_1, ..., u_n, v_n\in \tilde{R}$. Say that $\phi=\phi(x;y_1,y_2,u,v)$ is of the form $(G_n; \textbf{A}, \textbf{B}, \tilde{R}, \Delta)$ if either
    \[\phi(x;y_1,y_2,u,v)=T_{\tilde{R}}(u,v)\wedge \exists z\in \tilde{R}^n_\Delta\bigg(y_1+\textbf{A}\cdot z < x<y_2+\textbf{B}\cdot z\wedge \bigwedge_{i=1}^n u_i\leq z_i\leq v_i\bigg),\]
    or $\phi$ is obtained from the formula above by deleting some of the $u_i$ (equivalently, setting $u_i=-\infty$) and/or deleting some of the $v_i$ (equivalently, setting $v_i=+\infty$).
\end{defn}
It will be convenient to extend the definition of $(E_n)$ formulas to $n=0$; that is, $\phi(x;y)$ with $\abs{x}=1$ is of the form $(E_0)$ if
    \[\phi(x;y)=\bigwedge_{j=1}^k f_j(x,y)>0,\]
    where $f_1(x,y), ..., f_k(x,y)$ are $\Z$-affine functions. Note that such formulas are basic.
% \begin{remark}
%     It is crucial that $(F_n; \textbf{A}, \textbf{B}, \tilde{R}, \Delta)$ formulas are only (meaningfully) defined when $\textbf{A}$ is a tuple of non-zero operators. When $\textbf{A}$ consists of a zero operator, it is convenient to define the set of $(F_n; \textbf{A}, \textbf{B}, \tilde{R}, \Delta)$ formulas as $\emptyset$.
% \end{remark}
% Note that, since Presburger arithmetic is distal (see, for example, \cite[Example~2.9]{regularitylemma}), $(E_0)$ is equivalent to $\top$. We nonetheless define $(E_0)$ as above for expository purposes.

Our goal is to prove the following theorem.
% \begin{theorem}\label{sufficiency}
%     Let $\phi(x;y)$ be an $\mathcal{L}$-formula with $\abs{x}=1$. Then there is a finite collection $\mathcal{F}_\phi$ of quadruples $(n,\textbf{A},\textbf{B},\tilde{R})$, where $n\in\N^+$, $\textbf{A}$ is an $n$-tuple of non-zero operators, $\textbf{B}$ is an $n$-tuple of operators, and $\tilde{R}\subseteq^d R$ for some $d\in\N^+$, such that the following holds.
    
%     Let $\Delta_0\in\N$. Then $\phi(x;y)$ is equivalent to a Boolean combination of basic formulas and descendants of $(F_n;\textbf{A},\textbf{B},\tilde{R},\Delta)$ formulas with $(n,\textbf{A},\textbf{B},\tilde{R})\in \mathcal{F}_\phi$ and $\Delta\geq\Delta_0$.
% \end{theorem}
% Before proving the theorem, we record a corollary that makes explicit a sufficiency criterion for the distality of $(\Z,<,+,R)$.
\begin{theorem}\label{sufficiency}
    The following criterion is sufficient for the distality of $(\Z,<,+,R)$.

    Let $d,n\in\N^+$, $\tilde{R}\subseteq^d R$, $\textbf{A}$ be an $n$-tuple of non-zero operators, and $\textbf{B}$ be an $n$-tuple of operators. Then, for all sufficiently large $\Delta\in d\N$, every $(F_n;\textbf{A},\textbf{B},\tilde{R},\Delta)$ formula has a strong honest definition.
\end{theorem}
% \begin{proof}
%     Assume the criterion holds. Let $\phi(x;y)$ be an $\mathcal{L}$-formula with $\abs{x}=1$, and let $\mathcal{F}_\phi$ be as in Theorem \ref{sufficiency}. Let $\Delta_0\in\N$ be such that, for all $(n, \textbf{A}, \textbf{B}, \tilde{R})\in \mathcal{F}_\phi$, every $(F_n;\textbf{A},\textbf{B},\tilde{R},\Delta)$ formula with $\Delta\geq \Delta_0$ has a strong honest definition. Applying Theorem \ref{sufficiency} with this value of $\Delta_0$, $\phi(x;y)$ is equivalent to a Boolean combination of formulas with strong honest definitions.
% \end{proof}
We prove this in three steps. We first show that every $\mathcal{L}$-formula $\phi(x;y)$ with $\abs{x}=1$ is equivalent to a Boolean combination of basic formulas and descendants of $(E_n)$ formulas (Proposition \ref{agenda}). We then show that every $(E_n)$ formula is equivalent to a Boolean combination of basic formulas and descendants of $(E_{n-1})$ or $(G_n; ...)$ formulas (Proposition \ref{goingup}). Finally, we show that every $(G_n; ...)$ formula is equivalent to a Boolean combination of basic formulas, $(E_{n-1})$ formulas, and descendants of $(F_n; ...)$ formulas (Proposition \ref{EnFnimpliesGn}).

Our first checkpoint is the following proposition.
\begin{prop}\label{agenda}
    Modulo $(\Z,<,+,R)$, every formula $\phi(x;y)$ with $\abs{x}=1$ is equivalent to a Boolean combination of basic formulas and descendants of $(E_n)$ formulas.
\end{prop}
To prove this, we recall the following result of Semenov.
\patchcmd{\thmhead}{(#3)}{#3}{}{}
\begin{theorem}[{\cite[Theorem~3]{semenov}}]\label{existential}
    Modulo $(\Z,<,+,R)$, every formula $\phi(x)$ is equivalent to a disjunction of formulas of the form
    \[\exists z\in R^n \left(\bigwedge_{j=1}^k f_j(x)>\textbf{A}^{(j)}\cdot z \wedge \bigwedge_{p=1}^l g_p(x)\equiv \textbf{B}^{(p)}\cdot z\mod m_p\right),\]
    where $m_p\in\N^+$, $f_j(x), g_p(x)$ are $\Z$-affine functions, and $\textbf{A}^{(j)}, \textbf{B}^{(p)}$ are $n$-tuples of operators.
\end{theorem}
\patchcmd{\thmhead}{#3}{(#3)}{}{}
\begin{remark}\label{gapremark}
    In \cite[Theorem~3]{semenov}, $R\subseteq \N$ is only assumed to be sparse, not necessarily congruence-periodic. However, it would appear that Semenov's proof only goes through if congruence-periodicity is assumed. The statement of \cite[Theorem~3]{semenov} is for a larger class of theories which Semenov calls $\textbf{T}^*(\N,+,R,\mathcal{R})$. His proof uses \cite[Theorem~2]{semenov}, which is a statement for theories which Semenov calls $\textbf{T}^*(\N,>,\mathcal{P},R,\mathcal{R})$, where $\mathcal{P}$ is an eventually periodic set of predicates. Semenov applies this theorem with $\mathcal{P}=\mathcal{E}:=\{x\equiv c\; (\text{mod } m): c,m\in\N\}$. However, the proof of \cite[Theorem~2]{semenov} appears to use the fact that the predicates $(P\cap R: P\in \mathcal{P})$ are also eventually periodic in order to apply \cite[Theorem~1]{semenov}; in the case that $\mathcal{P}=\mathcal{E}$, this translates to the requirement that $R$ is congruence-periodic.

    An alternative viewpoint that casts doubt on the validity of Semenov's result (without assuming congruence-periodicity of $R$) is that this would imply that the formula $\neg \exists z\in R\;(x_1<z<x_2\wedge z\equiv c\mod m)$ is equivalent to an existential formula without any congruence-periodicity assumptions. We are unable to recover a proof of this from \cite{semenov} without assuming that $R$ is congruence-periodic, and our intuition is that this is false.
\end{remark}
\begin{proof}[Proof of Proposition \ref{agenda}]
By Theorem \ref{existential}, every partitioned $\mathcal{L}$-formula $\phi(x;y)$ with $\abs{x}=1$ is equivalent to a disjunction of formulas of the form
\[\exists z\in R^n \left(\bigwedge_{j=1}^k f_j(x,y)>\textbf{A}^{(j)}\cdot z \wedge \bigwedge_{p=1}^l g_p(x,y)\equiv \textbf{B}^{(p)}\cdot z\mod m_p\right),\]
    where $m_p\in\N^+$, $f_j(x,y), g_p(x,y)$ are $\Z$-affine functions, and $\textbf{A}^{(j)}, \textbf{B}^{(p)}$ are $n$-tuples of operators. By the Chinese Remainder Theorem, it suffices to assume that there is $m\in\N^+$ such that $m=m_p$ for all $1\leq p\leq l$.
    
    It suffices to show that every such formula is equivalent to a Boolean combination of basic formulas and descendants of $(E_s)$ formulas for some $s\in\N$. We do so by induction on $n\in\N$. When $n=0$, the formula is a basic formula. Now let $n\geq 1$, and let
    \[\phi(x,y):=\exists z\in R^n \left(\bigwedge_{j=1}^k f_j(x,y)>\textbf{A}^{(j)}\cdot z \wedge \bigwedge_{p=1}^l g_p(x,y)\equiv \textbf{B}^{(p)}\cdot z\mod m\right),\]
    where $m\in\N^+$, $f_j(x,y), g_p(x,y)$ are $\Z$-affine functions, and $\textbf{A}^{(j)}, \textbf{B}^{(p)}$ are $n$-tuples of operators.
    
    Let $(r_n: n\in\N)$ be an increasing enumeration of $R$. Since $R$ is congruence-periodic, there are $d,N\in\N$ such that $(r_n: n\geq N)$ is periodic mod $m$ with minimum period $d$. Observe that $\phi(x;y)$ is equivalent to $\phi_0(x;y)\vee \phi_1(x;y)$, where
\begin{align*}
    \phi_0(x;y)&:=\bigvee_{i=1}^n \bigvee_{\alpha=0}^{N-1} \exists z\in R^n \left(z_i=r_\alpha\wedge \bigwedge_{j=1}^k f_j(x,y)>\textbf{A}^{(j)}\cdot z \wedge \bigwedge_{p=1}^l g_p(x,y)\equiv \textbf{B}^{(p)}\cdot z\mod m\right),\\
    \phi_1(x;y)&:=\exists z\in R^n \left(\bigwedge_{i=1}^n z_i\geq r_N\wedge\bigwedge_{j=1}^k f_j(x,y)>\textbf{A}^{(j)}\cdot z \wedge \bigwedge_{p=1}^l g_p(x,y)\equiv \textbf{B}^{(p)}\cdot z\mod m\right).
\end{align*}
Consider $\phi_0(x;y)$. Replacing $z_i$ with $r_\alpha$ in the $(i,\alpha)^{\text{th}}$ disjunct, $\phi_0(x;y)$ is equivalent to a disjunction of formulas of the form
\[\exists w\in R^{n-1} \left(\bigwedge_{j=1}^k f'_j(x,y)>\textbf{A}'^{(j)}\cdot w \wedge \bigwedge_{p=1}^l g'_p(x,y)\equiv \textbf{B}'^{(p)}\cdot w\mod m\right),\]
where $f'_j(x,y), g'_p(x,y)$ are $\Z$-affine functions and $\textbf{A}'^{(j)}, \textbf{B}'^{(p)}$ are $(n-1)$-tuples of operators. By the induction hypothesis, such formulas are equivalent to a Boolean combination of basic formulas and descendants of $(E_s)$ formulas for some $s\in\N$.

Consider $\phi_1(x;y)$. Let $\tilde{R}:=\{r_{N+dt}: t\in\N\}$. By Lemma \ref{tildemotivate}, $\tilde{R}\subseteq^d R$. For $1\leq p\leq l$ and $0\leq h_1, ..., h_n<d$, let $0\leq b^{(p)}_{h_1, ..., h_n}<m$ be such that
\[\textbf{B}^{(p)}\cdot (r_{N+h_1}, ..., r_{N+h_n})\equiv b^{(p)}_{h_1, ..., h_n}\mod m.\]
Now $\phi_1(x;y)$ is equivalent to
\[\bigvee_{0\leq h_1, ..., h_n< d} \left(\bigwedge_{p=1}^l g_p(x,y)\equiv b^{(p)}_{h_1, ..., h_n}\mod m\wedge \exists z\in \tilde{R}^n\bigwedge_{j=1}^k f_j(x,y)>\textbf{A}^{(j)}\cdot (\sigma^{h_1}z_1, ..., \sigma^{h_n}z_n)\right).\]
But now, for all $0\leq h_1, ..., h_n<d$,
\begin{align*}
   &\exists z\in \tilde{R}^n\bigwedge_{j=1}^k f_j(x,y)>\textbf{A}^{(j)}\cdot (\sigma^{h_1}z_1, ..., \sigma^{h_n}z_n)\\
   &\Leftrightarrow\bigvee_{\tau\in\text{Sym}(n)}\exists z\in \tilde{R}^n \left(\bigwedge_{i=1}^{n-1}z_{\tau(i)}\geq z_{\tau(i+1)}\wedge \bigwedge_{j=1}^k f_j(x,y)>\textbf{A}^{(j)}\cdot (\sigma^{h_1}z_1, ..., \sigma^{h_n}z_n)\right)\\
   &\Leftrightarrow\bigvee_{\tau\in\text{Sym}(n)}\exists z\in \tilde{R}^n_0 \bigwedge_{j=1}^k f_j(x,y)>\textbf{A}^{(j)}\cdot (\sigma^{h_1}z_{\tau^{-1}(1)}, ..., \sigma^{h_n}z_{\tau^{-1}(n)}),
\end{align*}
so $\phi_1(x;y)$ is equivalent to a Boolean combination of basic formulas and $(E_n; \tilde{R})$ formulas.
% This is in turn equivalent to a Boolean combination of $\mathcal{L}^0$-formulas and formulas of the form
% \[\exists z\in (R')^n \bigwedge_{j=1}^k f_j(x,y,z)>0\]
% where $f_1, ..., f_k$ are as above and $R'\subseteq^m R$. Now $\mathcal{L}^0$-formulas have strong honest definitions since Presburger arithmetic is distal (see, for example, \cite[Example~2.9]{regularitylemma}).
\end{proof}

Our next checkpoint is the following proposition.
\begin{prop}\label{goingup}
    Let $d,n\in\N^+$, $\tilde{R}\subseteq^d R$, and $\phi(x;y)$ be an $(E_n; \tilde{R})$ formula. Then there is a finite collection $\mathcal{G}_\phi$ of pairs $(\textbf{A},\textbf{B})$, where $\textbf{A}, \textbf{B}$ are $n$-tuples of operators, satisfying the following.
    
    For all $\Delta\in d\N$ sufficiently large, $\phi$ is equivalent to a Boolean combination of basic formulas and descendants of $(E_{n-1};\tilde{R})$ or $(G_n;\textbf{A},\textbf{B},\tilde{R},\Delta)$ formulas for $(\textbf{A},\textbf{B})\in \mathcal{G}_\phi$.
\end{prop}
Towards this checkpoint, we prove the following technical lemma.
\begin{lemma}\label{uv}
    Let $d,n\in\N^+$, $\tilde{R}\subseteq^d R$, $\textbf{A}^{(1)}, ..., \textbf{A}^{(k)}$ be $n$-tuples of operators, and $\Delta\in d\N$ be sufficiently large. Then there are $1\leq i_1, ..., i_r\leq n$, an $\mathcal{L}^0$-formula $\theta$, and $\mathcal{L}$-definable functions $f_1, ..., f_r, u_1, ..., u_n, v_1, ..., v_n$ such that each $u_i$ (respectively $v_i$) either takes values in $\tilde{R}$ or is the constant $-\infty$ (respectively $+\infty$) function, satisfying that for all $y\in\Z^k$ and $z\in \tilde{R}^n_\Delta$,
    \[\bigwedge_{j=1}^k y_j > \textbf{A}^{(j)}\cdot z \Leftrightarrow \theta(y)\wedge \left(\bigg(\bigwedge_{j=1}^k y_j > \textbf{A}^{(j)}\cdot z\wedge\bigvee_{s=1}^r z_{i_s}=f_s(y)\bigg)\vee\bigg(\bigwedge_{i=1}^n u_i(y)\leq z_i\leq v_i(y)\bigg)\right).\]
    % Note that when $n=0$, the formula above just reads $\bigwedge_{j=1}^k y_j > 0 \Leftrightarrow \theta(y)$.
\end{lemma}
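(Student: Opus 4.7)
The plan is to induct on $n$, leveraging the lex-like ordering that Lemma \ref{ordering} places on $\textbf{A}^{(j)}\cdot z$ for $z\in\tilde{R}^n_\Delta$. The base case $n=1$ is immediate: each constraint $y_j>A^{(j)}z$ with $z\in\tilde{R}$ is either an upper bound on $z$, a lower bound on $z$, or a pure Presburger condition on $y_j$ (when $A^{(j)}=0$), so the set of solutions is an interval $[u_1(y),v_1(y)]\cap\tilde{R}$ for $\mathcal{L}$-definable $u_1,v_1$ (or the constants $\pm\infty$). Take $q=0$ and let $\theta$ collect the $A^{(j)}=0$ conditions together with the individual feasibility clauses $y_j>\inf A^{(j)}\cdot\tilde{R}$, which is Presburger since the infimum is a fixed integer or $-\infty$.

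For the inductive step, partition $\{1,\dots,k\}$ into $J_+,J_-,J_0$ according to the sign of $A^{(j)}_1$. For each $j\in J_+\cup J_-$ put $p^{(j)}:=P_\Delta(y_j;\textbf{A}^{(j)},\tilde{R})$; then, modulo the individual feasibility $y_j>\inf\textbf{A}^{(j)}\cdot\tilde{R}^n_\Delta$ (which I fold into $\theta$), the constraint $y_j>\textbf{A}^{(j)}\cdot z$ is equivalent to $\textbf{A}^{(j)}\cdot z\leq\textbf{A}^{(j)}\cdot p^{(j)}$, which by Lemma \ref{ordering} translates into a lex-style comparison of $z$ against $p^{(j)}$ with directions dictated by the signs of $A^{(j)}_1,\dots,A^{(j)}_n$. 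Put $V_1(y):=\min_{j\in J_+}p^{(j)}_1$ (or $+\infty$ if $J_+=\emptyset$) and $U_1(y):=\max_{j\in J_-}p^{(j)}_1$ (or $-\infty$ if $J_-=\emptyset$). Any $z$ satisfying the LHS must have $z_1\in[U_1(y),V_1(y)]$, so I split into the \emph{interior case} $U_1(y)<z_1<V_1(y)$ and the \emph{boundary case} $z_1=p^{(j)}_1$ for some $j\in J_+\cup J_-$; these two cases are automatically disjoint, since $p^{(j)}_1\geq V_1$ for $j\in J_+$ and $p^{(j)}_1\leq U_1$ for $j\in J_-$.

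In the interior case, every constraint in $J_+\cup J_-$ is forced by the first coordinate alone (Lemma \ref{ordering} gives $\textbf{A}^{(j)}\cdot z<\textbf{A}^{(j)}\cdot p^{(j)}<y_j$), leaving only the $J_0$ constraints, which do not involve $z_1$ and so reduce to an $(n-1)$-dimensional instance of the lemma on $(z_2,\dots,z_n)\in\tilde{R}^{n-1}_\Delta$. The induction hypothesis yields $u_2,\dots,u_n,v_2,\dots,v_n$, a Presburger formula $\theta'$, and additional boundary functions $f'_p$ with $i'_p\geq 2$. Set $u_1(y):=\sigma^d U_1(y)$ (the successor of $U_1$ in $\tilde{R}$, or the constant $-\infty$ if $J_-=\emptyset$) and $v_1(y):=\sigma^{-d}V_1(y)$ analogously. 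In the boundary case, each value $p^{(j)}_1=P^1_\Delta(y_j;\textbf{A}^{(j)},\tilde{R})$ contributes one $f_p$ with $i_p=1$, and the conjunct $\wedge\text{LHS}$ in the boundary clause takes care of the tail conditions automatically. Finally set $\theta(y):=\theta'(y)\wedge\bigwedge_{j=1}^{k}\bigl(y_j>\inf\textbf{A}^{(j)}\cdot\tilde{R}^n_\Delta\bigr)$; this stays in $\mathcal{L}^0$ because each infimum is a fixed integer or $-\infty$, per Remark \ref{Premark}.

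The main obstacle is the bookkeeping needed to verify the claimed equivalence. The $\Rightarrow$ direction uses the disjoint partition above. The $\Leftarrow$ direction requires that every $z$ lying in the assembled box, together with $\theta(y)$, genuinely satisfies the LHS: $z_1\geq u_1(y)>U_1(y)$ defeats the $J_-$ constraints via Lemma \ref{ordering}, $z_1\leq v_1(y)<V_1(y)$ defeats the $J_+$ constraints, and the inductive sub-box together with $\theta'$ defeats the $J_0$ constraints. A mild subtlety to track is that each $u_i$ (resp.\ $v_i$) must be either $\tilde{R}$-valued throughout or the constant $-\infty$ (resp.\ $+\infty$); this is automatic in the construction because the partition $J_+,J_-,J_0$ depends only on the fixed $\textbf{A}^{(j)}$'s and not on $y$, so whether $u_1$ or $v_1$ is constant $\pm\infty$ is decided once and for all, and the induction preserves the same property for $u_2,\dots,v_n$.
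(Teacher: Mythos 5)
Your proposal follows the same strategy as the paper: induct on $n$, partition the constraints according to the sign of $A^{(j)}_1$, translate each $H^\pm$ constraint via Lemma \ref{ordering} into a comparison of $z_1$ with $P^1_\Delta(y_j;\textbf{A}^{(j)})$, fold the $z_1 = P^1_\Delta(y_j;\textbf{A}^{(j)})$ cases into the boundary disjunction, take $u_1 = \sigma^d \max_{j\in H^-} P^1_\Delta$ and $v_1 = \sigma^{-d}\min_{j\in H^+}P^1_\Delta$, and apply the induction hypothesis to the $H^0$ constraints on $(z_2,\dots,z_n)$. The only differences are cosmetic: you state a base case $n=1$ explicitly (the paper's induction starts at $n=1$ with a vacuous hypothesis and implicitly uses the trivial $n=0$ instance for the $H^0$ block) and you add feasibility clauses $y_j > \inf\textbf{A}^{(j)}\cdot\tilde{R}^n_\Delta$ to $\theta$, which is harmless but not strictly necessary, since these are automatically implied by the LHS and the boxes/boundaries are empty when they fail.
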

\begin{proof}
    Let $H_0:=\{1\leq j\leq k: A^{(j)}_i=_R 0\text{ for all }1\leq i\leq n\}$, and for $1\leq i\leq n$, let
    \begin{align*}
        H^+_i&:=\{1\leq j\leq k: A^{(j)}_i>_R 0, A^{(j)}_e=_R 0\text{ for all }e<i\},\\
        H^-_i&:=\{1\leq j\leq k: A^{(j)}_i<_R 0, A^{(j)}_e=_R 0\text{ for all }e<i\},
    \end{align*}
    and write $H_i:=H^-_i\cup H^+_i$. Then $H_0, (H^+_i, H^-_i: 1\leq i\leq n)$ is a partition of $[k]=\{1, ..., k\}$.
    
    Let $1\leq i\leq n$. For all $j\in H^+_i$, define the function $f_j:\Z\to \tilde{R}$ by
    \[f_j(y):=\begin{cases}
        \max\{w\in \tilde{R}: \exists z\in \tilde{R}^n_\Delta (\textbf{A}^{(j)}\cdot z< y\wedge z_i=w)\}&\text{if well-defined},\\
        \min \tilde{R}&\text{otherwise}.
    \end{cases}\]
    By Lemma \ref{ordering}, for all $j\in H^+_i$, $y_j\in\Z^k$, and $z\in \tilde{R}^n_\Delta$, if $z_i<f_j(y_j)$ then $y_j>\textbf{A}^{(j)}\cdot z$, and if $z_i>f_j(y_j)$ then $y_j<\textbf{A}^{(j)}\cdot z$; thus,
    \[y_j>\textbf{A}^{(j)}\cdot z\Leftrightarrow (y_j>\textbf{A}^{(j)}\cdot z\wedge z_i=f_j(y_j))\vee z_i\leq \sigma^{-d}f_j(y_j).\]
    Similarly, for all $j\in H^-_i$, defining the function $f_j:\Z\to \tilde{R}$ by
    \[f_j(y):=\min\{w\in \tilde{R}: \exists z\in \tilde{R}^n_\Delta (\textbf{A}^{(j)}\cdot z< y\wedge z_i=w)\},\]
    we have that, for all $y_j\in\Z^k$ and $z\in \tilde{R}^n_\Delta$,
    \[y_j>\textbf{A}^{(j)}\cdot z\Leftrightarrow (y_j>\textbf{A}^{(j)}\cdot z\wedge z_i=f_j(y_j))\vee z_i\geq \sigma^d f_j(y_j).\]
    For all $1\leq i\leq n$, define $u_i(y):=\sup\{\sigma^d f_j(y): j\in H^-_i\}$ and $v_i(y):=\inf\{\sigma^{-d}f_j(y): j\in H^+_i\}$. Now, if $y_j>\textbf{A}^{(j)}\cdot z$ for all $j\in [k]\setminus H_0$, then either $z_i=f_j(y_j)$ for some $1\leq i\leq n$ and $j\in H_i$, or $u_i(y)\leq z_i\leq v_i(y)$ for all $1\leq i\leq n$. Conversely, if $u_i(y)\leq z_i\leq v_i(y)$ for all $1\leq i\leq n$, then $y_j>\textbf{A}^{(j)}\cdot z$ for all $j\in [k]\setminus H_0$. We conclude that, for all $y_j\in\Z^k$ and $z\in \tilde{R}^n_\Delta$,
    % \begin{align*}
    %     \bigwedge_{j=1}^k y_j > \textbf{A}^{(j)}\cdot z&\Leftrightarrow \bigwedge_{j\in H_0} y_j > \textbf{A}^{(j)}\cdot z \wedge \bigwedge_{i=1}^n\bigwedge_{j\in H_i} y_j > \textbf{A}^{(j)}\cdot z\\
    %     &\Leftrightarrow \bigwedge_{j\in H_0} y_j > 0 \wedge \bigwedge_{i=1}^n \left(u_i(y)\leq z_i\leq v_i(y)\vee \bigg(\bigwedge_{j\in H_i} y_j > \textbf{A}^{(j)}\cdot z\wedge \bigvee_{j\in H_i} z_i=f_j(y_j)\bigg)\right).
    % \end{align*}
    % It is straightforward to verify that this is equivalent to
    \[\bigwedge_{j=1}^k y_j > \textbf{A}^{(j)}\cdot z\Leftrightarrow\bigwedge_{j\in H_0} y_j > 0 \wedge \Bigg(\bigg(\bigwedge_{j=1}^k y_j > \textbf{A}^{(j)}\cdot z \wedge \bigvee_{i=1}^n \bigvee_{j\in H_i} z_i=f_j(y_j)\bigg)\vee\bigwedge_{i=1}^n u_i(y)\leq z_i\leq v_i(y)\Bigg)\]
    as required.
\end{proof}
\begin{proof}[Proof of Proposition \ref{goingup}]
    Let
    \[\phi(x;y)=\exists z\in \tilde{R}^n_0 \bigwedge_{j=1}^k f_j(x,y)>\textbf{A}^{(j)}\cdot z,\]
    where $\abs{x}=1$, $f_1(x,y), ..., f_k(x,y)$ are $\Z$-affine functions, and $\textbf{A}^{(1)}, ..., \textbf{A}^{(k)}$ are $n$-tuples of operators. We claim that $\mathcal{G}_\phi:=\{(\textbf{A}^{(j)}, -\textbf{A}^{(l)}): 1\leq j, l\leq k\}$ witnesses the proposition.
    
    For all $\Delta\in d\N$, $\phi(x;y)$ is equivalent to the disjunction of
    \[\phi'_\Delta(x;y):=\exists z\in \tilde{R}^n_\Delta \bigwedge_{j=1}^k f_j(x,y)>\textbf{A}^{(j)}\cdot z\]
    and
% \[\bigvee_{i=1}^n \exists z\in \tilde{R}^n_0\left(z_i<\sigma^\Delta z_{i+1}\wedge \bigwedge_{j=1}^k f_j(x;y)>\textbf{A}^{(j)}\cdot z\right).\]
% The $i^{\text{th}}$ disjunct is equivalent to
\[\bigvee_{i=1}^n\bigvee_{\alpha=0}^{\Delta-1}\exists z\in \tilde{R}^n_0\bigg(z_i=\sigma^\alpha z_{i+1}\wedge \bigwedge_{j=1}^k f_j(x;y)>\textbf{A}^{(j)}\cdot z\bigg),\]
where $z_{n+1}:=\min\tilde{R}$. Replacing $z_i$ with $\sigma^\alpha z_{i+1}$ in the $(i,\alpha)^{\text{th}}$ disjunct, it is clear that each disjunct is equivalent to
\[\exists w\in \tilde{R}^{n-1}_0\bigwedge_{j=1}^k f_j(x,y)>\textbf{B}^{(j)}\cdot w,\]
for some $(n-1)$-tuples $\textbf{B}^{(1)}, ..., \textbf{B}^{(k)}$ of operators, which is an $(E_{n-1};\tilde{R})$ formula.

% Thus, it suffices to show that for $\Delta\in d\N$ sufficiently large, $\phi'_\Delta(x;y)$ is equivalent to a Boolean combination of basic formulas, $(E_{n-1})$ formulas, and descendants of $(G_n)$ formulas.
Consider $\phi'_\Delta(x;y)$. By multiplying both sides of the inequalities in $\phi'_\Delta(x;y)$, we may assume without loss of generality that there are $K\in\N^+$ and $0\leq p\leq q\leq k$ such that, for $1\leq j\leq k$,
\[\text{the coefficient of }x\text{ in }f_j=\begin{cases}
    K&\text{if }j\leq p,\\
    -K&\text{if }p<j\leq q,\\
    0&\text{if }q<j.
\end{cases}\]
For $1\leq j\leq k$, let $g_j(y):=f_j(0,y)$. Then $\bigwedge_{j=1}^k f_j(x,y)>\textbf{A}^{(j)}\cdot z$ is (equivalent to)
    \[\bigg(\bigwedge_{j=1}^p -g_j(y)+\textbf{A}^{(j)}\cdot z<Kx \bigg)\wedge \bigg(\bigwedge_{j=p+1}^q Kx<g_j(y)-\textbf{A}^{(j)}\cdot z\bigg)\wedge \bigg(\bigwedge_{j=q+1}^k g_j(y)>\textbf{A}^{(j)}\cdot z\bigg).\]
    If $0=p=q$, then $\phi'_\Delta(x;y)$ is a basic formula. If $0=p<q$, then for all $\Delta\in d\N$,
    \[\phi'_\Delta(x;y) \Leftrightarrow Kx<\sup \bigg\lbrace\inf_{p+1\leq j\leq q} g_j(y)-\textbf{A}^{(j)}\cdot z: z\in \tilde{R}^n_\Delta, \bigwedge_{j=q+1}^k g_i(y)>\textbf{A}^{(i)}\cdot z\bigg\rbrace,\]
    which is a basic formula. The case where $0<p=q$ is similar, so let us assume $0<p<q$. Now $\bigwedge_{j=1}^p -g_j(y)+\textbf{A}^{(j)}\cdot z<Kx$ is equivalent to
    \[\bigvee_{j=1}^p \bigg(-g_j(y)+\textbf{A}^{(j)}\cdot z<Kx\wedge\bigwedge_{\substack{i=1\\i\neq j}}^p -g_j(y)+\textbf{A}^{(j)}\cdot z\geq -g_i(y)+\textbf{A}^{(i)}\cdot z\bigg),\]
    and $\bigwedge_{j=p+1}^q Kx<g_j(y)-\textbf{A}^{(j)}\cdot z$ is equivalent to
    \[\bigvee_{j=p+1}^q \bigg(Kx<g_j(y)-\textbf{A}^{(j)}\cdot z\wedge\bigwedge_{\substack{i=p+1\\i\neq j}}^q g_j(y)-\textbf{A}^{(j)}\cdot z\leq g_i(y)-\textbf{A}^{(i)}\cdot z\bigg).\]
    Thus, for all $\Delta\in d\N$, $\phi'_\Delta(x;y)$ is equivalent to
    \[\bigvee_{j=1}^p\bigvee_{l=p+1}^q \exists z\in \tilde{R}^n_\Delta\bigg(-g_j(y)+\textbf{A}^{(j)}\cdot z<Kx < g_l(y)-\textbf{A}^{(l)}\cdot z\wedge h_{jl}(y,z)\bigg),\]
    where $h_{jl}(y,z)$ is
    \[\bigwedge_{\substack{i=1\\i\neq j}}^p g_i(y)-g_j(y)\geq (\textbf{A}^{(i)}-\textbf{A}^{(j)})\cdot z\wedge \bigwedge_{\substack{i=p+1\\i\neq l}}^q g_i(y)-g_l(y)\geq (\textbf{A}^{(i)}-\textbf{A}^{(l)})\cdot z\wedge \bigwedge_{i=q+1}^k g_i(y)>\textbf{A}^{(i)}\cdot z.\]
    % \begin{align*}
    %     \bigwedge_{\substack{i=1\\i\neq j}}^p g_i(y)-g_j(y)\geq (\textbf{A}^{(i)}-\textbf{A}^{(j)})\cdot z&\wedge \bigwedge_{\substack{i=p+1\\i\neq l}}^q g_i(y)-g_l(y)\geq (\textbf{A}^{(i)}-\textbf{A}^{(l)})\cdot z\\
    %     &\wedge \bigwedge_{i=q+1}^k g_i(y)>\textbf{A}^{(i)}\cdot z.
    % \end{align*}
    % By $(G_n)$, we may take $\Delta\in d\N$ to be sufficiently large such that the formula
    % \[\psi_{jl}(x;w,u,v):=T(u,v)\wedge \exists z\in \tilde{R}^n_\Delta\bigg(w_1+\textbf{A}^{(j)}\cdot z < x<w_2-\textbf{A}^{(l)}\cdot z\wedge \bigwedge_{i=1}^n u_i\leq z_i\leq v_i\bigg)\]
    % has a strong honest definition for all $1\leq j\leq p$ and $p+1\leq l\leq q$, where $T(u,v)$ is the formula $u_1,...,u_n\in \tilde{R}\cup\{-\infty\}\wedge v_1,...,v_n\in \tilde{R}\cup\{+\infty\}$.

Apply Lemma \ref{uv} to each $h_{jl}(y,z)$, assuming $\Delta\in d\N$ is sufficiently large. For all $1\leq j\leq p< l\leq q$, there are $1\leq i^{jl}_1, ..., i^{jl}_{r(j,l)}\leq n$, an $\mathcal{L}^0$-formula $\theta_{jl}$, and $\mathcal{L}$-definable functions $f^{jl}_1, ..., f^{jl}_{r(j,l)},u^{jl}_1, ..., u^{jl}_n, v^{jl}_1, ..., v^{jl}_n$ such that each $u^{jl}_i$ (respectively $v^{jl}_i$) either takes values in $\tilde{R}$ or is the constant $-\infty$ (respectively $+\infty$) function, satisfying that for all $y\in\Z^k$ and $z\in \tilde{R}^n_\Delta$,
% By Lemma \ref{uv}, we may take $\Delta\in d\N$ to be sufficiently large such that the following holds: there are $1\leq i_1, ..., i_r\leq n$ and $\mathcal{L}$-definable functions $f_1, ..., f_r$, such that for all $1\leq j\leq p$ and $p+1\leq l\leq q$, there are an $\mathcal{L}^0$-formula $\theta_{jl}$ and $\mathcal{L}$-definable functions $u^{jl}_1, ..., u^{jl}_n, v^{jl}_1, ..., v^{jl}_n$ such that for all $1\leq i\leq n$, $u^{jl}_i$ (respectively $v^{jl}_i$) either takes values in $\tilde{R}$ or is the constant $-\infty$ (respectively $+\infty$) function, satisfying that for all $y\in\Z^k$ and $z\in \tilde{R}^n_\Delta$,
    \[h_{jl}(y,z) \Leftrightarrow \theta_{jl}(y)\wedge\left( \bigg(h_{jl}(y,z)\wedge\bigvee_{s=1}^{r(j,l)} z_{i^{jl}_s}=f^{jl}_s(y)\bigg)\vee\bigg(\bigwedge_{i=1}^n u^{jl}_i(y)\leq z_i\leq v^{jl}_i(y)\bigg)\right).\]
    
% Note that, a priori, $i_1, ..., i_r$ and $f_1, ..., f_r$ depend on $j$ and $l$, but it is straightforward to see that $i_1, ..., i_r$ and $f_1, ..., f_r$ can be chosen uniformly for all $j,l$.
Then, $\phi'_\Delta(x;y)$ is equivalent to the disjunction of
\[\bigvee_{j=1}^p\bigvee_{l=p+1}^q\bigvee_{s=1}^{r(j,l)} \bigg(\theta_{jl}(y)\wedge \exists z\in \tilde{R}^n_\Delta\Big(-g_j(y)+\textbf{A}^{(j)}\cdot z<Kx < g_l(y)-\textbf{A}^{(l)}\cdot z\wedge h_{jl}(y,z)\wedge z_{i^{jl}_s}=f^{jl}_s(y)\Big)\bigg),\]
which is equivalent to a Boolean combination of basic formulas and descendants of $(E_{n-1};\tilde{R})$ formulas (since $z_{i^{jl}_s}=f^{jl}_s(y)$ in the $(j,l,s)^{\text{th}}$ disjunct), and
\[\bigvee_{j=1}^p\bigvee_{l=p+1}^q \Bigg(\theta_{jl}(y)\wedge\exists z\in \tilde{R}^n_\Delta\bigg(-g_j(y)+\textbf{A}^{(j)}\cdot z<Kx < g_l(y)-\textbf{A}^{(l)}\cdot z\wedge \bigwedge_{i=1}^n u^{jl}_i(y)\leq z_i\leq v^{jl}_i(y)\bigg)\Bigg),\]
which is a Boolean combination of basic formulas and descendants of $(G_n; \textbf{A}, \textbf{B}, \tilde{R}, \Delta)$ formulas for $(\textbf{A}, \textbf{B})\in \mathcal{G}_\phi$.
% which admits a strong honest definition since $\psi_{jl}$ does for all $1\leq j\leq p$ and $p+1\leq l\leq q$. Hence, $\phi'_\Delta(x;y)$ admits a strong honest definition, which finishes the proof.
\end{proof}

Our final checkpoint is the following proposition.
\begin{prop}\label{EnFnimpliesGn}
    Let $n\in\N^+$ and $\textbf{A}, \textbf{B}$ be $n$-tuples of operators. Then there is a finite collection $\mathcal{F}_{\textbf{A}, \textbf{B}}$ of tuples $(\textbf{I}, \textbf{J})$, where $\textbf{I}$ is an $n$-tuple of non-zero operators and $\textbf{J}$ is an $n$-tuple of operators, satisfying the following.
    
    Let $\tilde{R}\subseteq^d R$ for some $d\in\N^+$. If $\Delta\in d\N$ is sufficiently large, then every $(G_n; \textbf{A}, \textbf{B}, \tilde{R}, \Delta)$ formula is equivalent to a Boolean combination of basic formulas, $(E_{n-1};\tilde{R})$ formulas, and descendants of $(F_n;\textbf{I},\textbf{J},\tilde{R},\Delta)$ formulas for $(\textbf{I},\textbf{J})\in \mathcal{F}_{\textbf{A}, \textbf{B}}$.
\end{prop}
Before proving this, we record a corollary.
\begin{cor}\label{EntoFn}
    Let $d,n\in\N^+$, $\tilde{R}\subseteq^d R$, and $\phi(x;y)$ be an $(E_n; \tilde{R})$ formula. Then there is a finite collection $\mathcal{F}_\phi$ of tuples $(\textbf{I}, \textbf{J})$, where $\textbf{I}$ is an $n$-tuple of non-zero operators and $\textbf{J}$ is an $n$-tuple of operators, satisfying the following.
    
    If $\Delta\in d\N$ is sufficiently large, then $\phi(x;y)$ is equivalent to a Boolean combination of basic formulas and descendants of $(E_{n-1};\tilde{R})$ formulas or $(F_n;\textbf{I},\textbf{J},\tilde{R},\Delta)$ formulas for $(\textbf{I},\textbf{J})\in \mathcal{F}_\phi$.
\end{cor}
\begin{proof}
    For $\mathcal{G}_\phi$ from Proposition \ref{goingup}, let $\mathcal{F}_\phi:=\bigcup_{(\textbf{A},\textbf{B})\in\mathcal{G}_\phi}\mathcal{F}_{\textbf{A},\textbf{B}}$ for $\mathcal{F}_{\textbf{A},\textbf{B}}$ from Proposition \ref{EnFnimpliesGn}.
\end{proof}
Towards proving Proposition \ref{EnFnimpliesGn}, we prove the following lemma.
\begin{lemma}\label{canonicalwitness}
    Let $d,n\in\N^+$, $\tilde{R}\subseteq^d R$, $\textbf{A}, \textbf{B}$ be $n$-tuples of operators, and $\Delta\in d\N$ be sufficiently large. Then for all $x,y_1,y_2\in\Z$, $u_i\in \tilde{R}\cup\{-\infty\}$, and $v_i\in \tilde{R}\cup\{+\infty\}$, if
    \begin{equation}\label{canonicalwitnessfmla}\tag{$\dag$}
        \exists z\in \tilde{R}^n_\Delta \bigg(y_1+\textbf{A}\cdot z<x<y_2+\textbf{B}\cdot z\wedge \bigwedge_{i=1}^n u_i\leq z_i\leq v_i\bigg),
    \end{equation}
    then either $v_1=+\infty\wedge (A_1=_R 0<_R B_1 \vee A_1<_R 0=_R B_1)$ or there is a witness $z\in \tilde{R}^n_\Delta$ satisfying one of the following:
    \begin{enumerate}[(i)]
        \item $z_i=\sigma^{\Delta}z_{i+1}$ for some $1\leq i\leq n$, where $z_{n+1}:=\min\tilde{R}$;
        \item $z_i\in\{u_i,v_i\}$ for some $1\leq i\leq n$;
        \item $A_i, B_i\neq_R 0$ for all $1\leq i\leq n$, and $z=P_\Delta(x-y_1;\textbf{A})$ or $z=P_\Delta(y_2-x;-\textbf{B})$.
    \end{enumerate}
\end{lemma}
This lemma has a rather intuitive interpretation: if (\ref{canonicalwitnessfmla}) holds then, barring some edge cases, $z$ can be chosen to satisfy (iii), that is, to maximise $y_1+\textbf{A}\cdot z$ subject to $y_1+\textbf{A}\cdot z<x$ --- namely, $z=P_\Delta(x-y_1;\textbf{A})$ --- or minimise $y_2+\textbf{B}\cdot z$ subject to $x<y_2+\textbf{B}\cdot z$ --- namely, $z=P_\Delta(y_2-x;-\textbf{B})$.
\begin{proof}[Proof of Lemma \ref{canonicalwitness}]
Suppose $v_1\neq +\infty\vee \neg(A_1= 0< B_1 \vee A_1< 0= B_1)$. We first show that if $A_i=0$ or $B_i=0$ for some $1\leq i\leq n$, then there is a witness $z\in \tilde{R}^n_\Delta$ satisfying (i) or (ii).
    
    Suppose $A_i=0$ for some $1\leq i\leq n$; fix the minimal such $i$. Suppose there is no witness to (\ref{canonicalwitnessfmla}) satisfying (i) or (ii). Pick a witness $z\in \tilde{R}^n_\Delta$ that minimises
    \[\begin{cases}
        \min\{z_{i-1},v_i\}/z_i&\text{if }B_i> 0\\
        z_i/\max\{z_{i+1},u_i\}&\text{if }B_i\leq 0
    \end{cases},\]
    where $z_0:=+\infty$ and $z_{n+1}:=\min\tilde{R}$. Let $w$ be the $n$-tuple obtained from $z$ by replacing $z_i$ with $\sigma^d z_i$ (respectively $\sigma^{-d} z_i$) if $B_i> 0$ (respectively $B_i\leq 0$). Since $z$ does not satisfy (i) or (ii), we have that $w\in \tilde{R}^n_\Delta$ and $u_i\leq w_i\leq v_i$. But $\textbf{B}\cdot z\leq \textbf{B}\cdot w$ by Lemma \ref{monotone}, so
    \[y_1+\textbf{A}\cdot w=y_1+\textbf{A}\cdot z<x<y_2+\textbf{B}\cdot z\leq y_2+\textbf{B}\cdot w,\]
    whence $w$ is a witness to $(\dag)$, contradicting our choice of $z$.
    
    The case where $B_i= 0$ for some $1\leq i\leq n$ is similar, so henceforth suppose $A_i, B_i\neq 0$ for all $1\leq i\leq n$, and suppose there is no witness to $(\dag)$ satisfying (i), (ii), or (iii). By Lemma \ref{ordering}, we may assume that the function $z\mapsto \textbf{A}\cdot z$ is injective on $\tilde{R}^n_\Delta$. Now any witness $z\in \tilde{R}^n_\Delta$ to $(\dag)$ satisfies $\textbf{A}\cdot z<x-y_1$ and so $\textbf{A}\cdot z\leq  \textbf{A}\cdot P_\Delta(x-y_1;\textbf{A})$, and the inequality is strict since $z$ does not satisfy (iii). Fix a witness $z\in \tilde{R}^n_\Delta$ to $(\dag)$ that maximises $\textbf{A}\cdot z$.
    
    Let $w$ be the $n$-tuple obtained from $z$ by replacing $z_n$ with $\sigma^d z_n$ (respectively $\sigma^{-d}z_n$) if $A_n > 0$ (respectively $A_n< 0$). Since $z$ does not satisfy (i) or (ii), we have that $w\in \tilde{R}^n_\Delta$ and $u_n\leq w_n\leq v_n$. By Lemma \ref{ordering}, there is no $r\in \tilde{R}^n_\Delta$ such that $\textbf{A}\cdot r$ lies strictly between $\textbf{A}\cdot z$ and $\textbf{A}\cdot w$. Recalling that $\textbf{A}\cdot z<  \textbf{A}\cdot P_\Delta(x-y_1;\textbf{A})$, this shows that $\textbf{A}\cdot w\leq \textbf{A}\cdot P_\Delta(x-y_1;\textbf{A})$.
    
    By a similar argument, $\textbf{B}\cdot w\geq \textbf{B}\cdot P_\Delta(y_2-x;-\textbf{B})$. Thus,
    \[y_1+\textbf{A}\cdot w \leq \textbf{A}\cdot P_\Delta(x-y_1;\textbf{A}) < x < y_2 + \textbf{B}\cdot P_\Delta(y_2-x;-\textbf{B}) \leq y_2+\textbf{B}\cdot w,\]
    so $w$ is a witness to $(\dag)$. By Lemma \ref{ordering}, $\textbf{A}\cdot z<\textbf{A}\cdot w$, contradicting our choice of $z$.
\end{proof}
\begin{proof}[Proof of Proposition \ref{EnFnimpliesGn}]
    Let
    \[\mathcal{F}_{\textbf{A}, \textbf{B}}:=\{(\textbf{A},\textbf{B}),(-\textbf{B},-\textbf{A})\}\cup\{(\textbf{A},\textbf{F}^i), (-\textbf{B},\textbf{F}^i): 1\leq i\leq n\}\]
    if $\textbf{A}, \textbf{B}$ are tuples of non-zero operators, and let $\mathcal{F}_{\textbf{A}, \textbf{B}}:=\emptyset$ otherwise (recall that $\textbf{F}^i$ was defined as the $i^{\text{th}}$ standard tuple of operators). We claim that this witnesses the proposition.
    
    Let $\tilde{R}\subseteq^d R$ for some $d\in\N^+$, and let $\Delta\in d\N$ be sufficiently large as in Lemma \ref{canonicalwitness}. Let $\phi(x;y,u,v)$ be a $(G_n; \textbf{A}, \textbf{B}, \tilde{R}, \Delta)$ formula, say
    \[\phi(x;y,u,v)=T_{\tilde{R}}(u,v)\wedge \exists z\in \tilde{R}^n_\Delta\bigg(y_1+\textbf{A}\cdot z < x<y_2+\textbf{B}\cdot z\wedge \bigwedge_{i=1}^n u_i\leq z_i\leq v_i\bigg),\]
    where some of the $u_i$ (respectively $v_i$) may be $-\infty$ (respectively $+\infty$). Write $T(u,v)$ for $T_{\tilde{R}}(u,v)$.
    
    If $v_1=+\infty$ and $A_1=_R0<_RB_1$, then $\phi(x;y,u,v)$ is equivalent to
    \[\zeta(x;y,u,v):=T(u,v)\wedge \exists z\in \tilde{R}^n_\Delta \bigg(y_1+\textbf{A}\cdot z < x \wedge \bigwedge_{i=1}^n u_i\leq z_i\leq v_i\bigg).\]
    Indeed, clearly $\phi$ implies $\zeta$, and if $z\in \tilde{R}^n_\Delta$ witnesses $\zeta$, then for all/some sufficiently large $a\in \tilde{R}$, we have $w:=(a,z_{>1})\in \tilde{R}^n_\Delta$ and
    \[y_1+\textbf{A}\cdot w=y_1+\textbf{A}\cdot z<x<y_2+\textbf{B}\cdot w \wedge \bigwedge_{i=1}^n u_i\leq z_i\leq w_i <v_i.\]
    But $\zeta$ is equivalent to
    \[T(u,v)\wedge x>y_1+\inf\bigg\lbrace \textbf{A}\cdot z: z\in\tilde{R}^n_\Delta, \bigwedge_{i=1}^n u_i\leq z_i\leq v_i\bigg\rbrace,\]
    which is a basic formula. Thus, if $v_1=+\infty$ and $A_1=_R0<_RB_1$, then $\phi$ is equivalent to a basic formula. A similar situation arises if $v_1=+\infty$ and $A_1<_R0=_RB_1$, so henceforth suppose neither case holds. Let $\bar{\phi}(x;y,u,v,z)$ be the formula
    \[y_1+\textbf{A}\cdot z < x<y_2+\textbf{B}\cdot z \wedge \bigwedge_{i=1}^n u_i\leq z_i\leq v_i.\]
    For $1\leq i\leq n$, let
    \begin{align*}
        \alpha_i(x;y,u,v)&:=T(u,v)\wedge \exists z\in \tilde{R}^n_\Delta \left(z_i=\sigma^{\Delta}z_{i+1}\wedge \bar{\phi}(x;y,u,v,z)\right),\\
        \beta_i(x;y,u,v)&:=T(u,v)\wedge \exists z\in \tilde{R}^n_\Delta \left(z_i=u_i\wedge \bar{\phi}(x;y,u,v,z)\right),\\
        \gamma_i(x;y,u,v)&:=T(u,v)\wedge \exists z\in \tilde{R}^n_\Delta \left(z_i=v_i\wedge \bar{\phi}(x;y,u,v,z)\right),
    \end{align*}
    where $z_{n+1}:=\min\tilde{R}$. Furthermore, if $\textbf{A}$ and $\textbf{B}$ are tuples of non-zero operators then let
    \begin{align*}
        \theta(x;y,u,v):=T(u,v)&\wedge x-y_1>\inf \textbf{A}\cdot \tilde{R}^n_\Delta\\
        &\wedge x<y_2+\textbf{B}\cdot P_\Delta(x-y_1;\textbf{A})\wedge\bigwedge_{i=1}^n u_i\leq P^i_\Delta(x-y_1;\textbf{A})\leq v_i,\\
        \xi(x;y,u,v):=T(u,v)&\wedge y_2-x>\inf -\textbf{B}\cdot \tilde{R}^n_\Delta\\
        &\wedge x>y_1+\textbf{A}\cdot P_\Delta(y_2-x;-\textbf{B})\wedge\bigwedge_{i=1}^n u_i\leq P^i_\Delta(y_2-x;-\textbf{B})\leq v_i.
    \end{align*}
    By Lemma \ref{canonicalwitness} (and Lemma \ref{Plemma}), $\phi(x;y,u,v)$ is equivalent to
    \[\begin{cases}
        \theta\vee\xi\vee\bigvee_{i=1}^n (\alpha_i\vee \beta_i\vee \gamma_i) &\text{if }\textbf{A},\textbf{B} \text{ are tuples of non-zero operators},\\
        \bigvee_{i=1}^n (\alpha_i\vee \beta_i\vee \gamma_i) &\text{otherwise}.
    \end{cases}\]
    
    Observe that $\theta$ is a Boolean combinations of basic formulas and descendants of $(F_n;\textbf{I},\textbf{J},\tilde{R},\Delta)$ formulas for $(\textbf{I}, \textbf{J})\in \mathcal{F}_{\textbf{A}, \textbf{B}}$, since, for all $1\leq i\leq n$,
    \[u_i\leq P^i_\Delta(x-y_1;\textbf{A})\leq v_i\leftrightarrow u_i-1<\textbf{F}^i\cdot P_\Delta(x-y_1;\textbf{A})\wedge \neg (v_i<\textbf{F}^i\cdot P_\Delta(x-y_1;\textbf{A})).\]
    But this is also true for $\xi$, since, for example, $x>y_1+\textbf{A}\cdot P_\Delta(y_2-x;-\textbf{B})$ is a descendant of $-x>-y_2+\textbf{A}\cdot P_\Delta(-y_1+x;-\textbf{B})$, which is equivalent to $x-y_2<-\textbf{A}\cdot P_\Delta(x-y_1;-\textbf{B})$.
    
    For all $1\leq i\leq n$, $\alpha_i$, $\beta_i$, and $\gamma_i$ are equivalent to the conjunction of $T(u,v)$, which is a basic formula, and an $(E_{n-1},\tilde{R})$ formula, by substituting $z_i$ with $\sigma^\Delta z_{i+1}$, $u_i$, or $v_i$ as appropriate.
    
    Thus, $\phi$ is equivalent to a Boolean combination of basic formulas, $(E_{n-1};\tilde{R})$ formulas, and descendants of $(F_n;\textbf{I},\textbf{J},\tilde{R},\Delta)$ formulas for $(\textbf{I},\textbf{J})\in \mathcal{F}_{\textbf{A}, \textbf{B}}$.
\end{proof}

We are now ready to prove Theorem \ref{sufficiency}.
\begin{proof}[Proof of Theorem \ref{sufficiency}]
    Assume the criterion holds. By Proposition \ref{agenda}, it suffices to prove that every $(E_n)$ formula has a strong honest definition. We do so by induction on $n\in\N$. An $(E_0)$ formula is a basic formula, so suppose $n\geq 1$.

    Let $\phi$ be an $(E_n;\tilde{R})$ formula, where $\tilde{R}\subseteq^d R$ for some $d\in \N^+$. Let $\mathcal{F}_\phi$ be as in Corollary \ref{EntoFn}. Then, for all $\Delta\in d\N$ sufficiently large, $\phi$ is equivalent to a Boolean combination of basic formulas and descendants of $(E_{n-1};\tilde{R})$ or $(F_n; \textbf{I}, \textbf{J}, \tilde{R}, \Delta)$ formulas for $(\textbf{I},\textbf{J})\in \mathcal{F}_\phi$. By the induction hypothesis, every $(E_{n-1};\tilde{R})$ formula has a strong honest definition. By assumption, for all $\Delta\in d\N$ sufficiently large, every $(F_n; \textbf{I}, \textbf{J}, \tilde{R}, \Delta)$ formula for $(\textbf{I},\textbf{J})\in \mathcal{F}_\phi$ has a strong honest definition, noting that $\mathcal{F}_\phi$ is finite. Thus, $\phi$ is a Boolean combination of formulas with strong honest definitions.
\end{proof}
The rest of the paper is thus dedicated to establishing the sufficiency criterion in Theorem \ref{sufficiency}, by constructing strong honest definitions for $(F_n; \textbf{A}, \textbf{B}, \tilde{R}, \Delta)$ formulas with $\Delta$ sufficiently large. Note that this then gives a strong honest definition for \textit{every} $\mathcal{L}$-formula $\phi(x;y)$ with $\abs{x}=1$, since we have exhibited a way to write every such formula as a Boolean combination of basic formulas and descendants of $(F_n; \textbf{A}, \textbf{B}, \tilde{R}, \Delta)$ formulas with $\Delta$ sufficiently large. Indeed, by Proposition \ref{agenda}, every $\mathcal{L}$-formula $\phi(x;y)$ with $\abs{x}=1$ is equivalent to a Boolean combination of basic formulas and descendants of $(E_n)$ formulas. Example \ref{LOexample} gives strong honest definitions for basic formulas, and the proof of Corollary \ref{EntoFn} describes an algorithm for writing every $(E_n; \tilde{R})$ formula as a Boolean combination of descendants of $(E_{n-1}; \tilde{R})$ formulas and descendants of $(F_n; \textbf{A}, \textbf{B}, \tilde{R}, \Delta)$ formulas with $\Delta$ sufficiently large.
\section{Main construction}\label{sectionmain}
Recall that $R\subseteq\N$ is our fixed congruence-periodic sparse predicate. In this section, we show that every $(F_n;\textbf{A},\textbf{B},\tilde{R},\Delta)$ formula with $\Delta$ sufficiently large has a strong honest definition.

The broad strategy is to induct on $n$. Theorem \ref{catchall} can be seen as a stronger version of the $n=1$ case, and Theorem \ref{multidimension} handles the inductive step.

The following lemma transpires to be surprisingly useful.
\begin{lemma}\label{innocuous}
    Let $d,n\in\N^+$, $\tilde{R}\subseteq^d R$, and $\textbf{A}$ be an $n$-tuple of non-zero operators with $A_1>_R 0$ (respectively $A_1<_R 0$). Then there is $\Lambda\in\N$ such that the following holds.
    
    Let $\Delta\in d\N$ be sufficiently large, and let $s,t,x\in\Z$ be such that $s\leq t\leq x$ (respectively $s\geq t\geq x$). Then there is $0\leq\alpha\leq\Lambda$ such that $P^1_\Delta(x-s;\textbf{A})=\sigma^{\alpha} P^1_\Delta(x-t;\textbf{A})$ or $P^1_\Delta(x-s;\textbf{A})=\sigma^{\alpha} P^1_\Delta(t-s;\textbf{A})$.
\end{lemma}
    Let us give an intuitive interpretation of this lemma. Assuming $A_1>0$ for the purpose of this discussion, the lemma simply says that if $s\leq t\leq x$, then $x-s$ is `close' (with respect to the function $P^1_\Delta(\cdot;\textbf{A})$) to either $x-t$ or $t-s$.
\begin{proof}[Proof of Lemma \ref{innocuous}]
By Lemma \ref{beatsme}, we can fix $\Lambda\in\N$ such that $|A_1\sigma^\Lambda r|>|8A_1\sigma^d r|$ for all $r\in R$. Let $\Delta\in d\N$ be sufficiently large, and let $s,t,x\in\Z$ be such that $s\leq t\leq x$ if $A_1>_R 0$ and $s\geq t\geq x$ if $A_1<_R 0$. Let $w:=P_\Delta(x-t;\textbf{A})$ and $z:=P_\Delta(t-s;\textbf{A})$.

First suppose $A_1>_R 0$. Then
\[t-s\leq \textbf{A}\cdot Q_\Delta(t-s;\textbf{A})< 2A_1Q^1_\Delta(t-s;\textbf{A})\leq 2A_1 \sigma^d z_1,\]
where the first and last inequalities are by Lemma \ref{Plemma} and the second inequality is by Lemma \ref{domination}. Similarly, $x-t< 2A_1\sigma^d w_1$. But now
\[x-s=(x-t)+(t-s)< 4A_1\sigma^d \max\{z_1,w_1\}<\frac{1}{2}A_1 \sigma^\Lambda \max\{z_1,w_1\},\]
so, by Lemma \ref{domination}, $x\leq s+\textbf{A}\cdot u$ for all $u\in \tilde{R}^n_\Delta$ with $u_1\geq \sigma^\Lambda \max\{z_1,w_1\}$. Thus, $P^1_\Delta(x-s;\textbf{A})<\sigma^\Lambda \max\{z_1, w_1\}$. But $x\geq t\geq s$, so $\textbf{A}\cdot P_\Delta(x-s;\textbf{A})\geq \max\{\textbf{A}\cdot z, \textbf{A}\cdot w\}$, and thus $P^1_\Delta(x-s;\textbf{A})\geq \max\{z_1, w_1\}$ by Lemma \ref{ordering}.

Now suppose $A_1<_R 0$. Then $t-s>A_1z_1$ and $x-t>A_1w_1$ by Lemma \ref{Plemma}, whence
\[x-s=(x-t)+(t-s)>2A_1\max\{z_1,w_1\}>\frac{1}{4}A_1\sigma^\Lambda \max\{z_1,w_1\},\]
so, by Lemma \ref{domination}, $x> s+\textbf{A}\cdot u$ for all/some $u\in \tilde{R}^n_\Delta$ with $u_1= \sigma^\Lambda \max\{z_1,w_1\}$. Thus, $P^1_\Delta(x-s;\textbf{A})\leq \sigma^\Lambda \max\{z_1, w_1\}$. But $x\leq t\leq s$, so $\textbf{A}\cdot P_\Delta(x-s;\textbf{A})\leq \min\{\textbf{A}\cdot z, \textbf{A}\cdot w\}$, and thus $P^1_\Delta(x-s;\textbf{A})\geq \max\{z_1, w_1\}$ by Lemma \ref{ordering}.
\end{proof}
\begin{lemma}\label{firstcoorddef}
    Let $d,n\in\N^+$, $\tilde{R}\subseteq^d R$, $\textbf{A}$ be an $n$-tuple of non-zero operators, and $\Delta\in d\N$ be sufficiently large. Then the formula $\phi(x;y):=P^1_\Delta(x-y_1;\textbf{A})=y_2$ has a strong honest definition, given by the conjunction of strong honest definitions for the basic formulas
    \[\phi_1(x;y):=\begin{cases}
        x-y_1\leq \min\{\textbf{A}\cdot z: z\in \tilde{R}^n_\Delta, z_1=\sigma^d N\}&\text{if }A_1>_R0,\\
        x-y_1> \min\{\textbf{A}\cdot z: z\in \tilde{R}^n_\Delta, z_1=N\}&\text{if }A_1<_R0,
    \end{cases}\]
    where $N:=\sigma^{n\Delta}(\min \tilde{R})$, and
    \[\phi_2(x;y):=\min\{\textbf{A}\cdot z: z\in \tilde{R}^n_\Delta, z_1=y_2\}<x-y_1\leq \min\{\textbf{A}\cdot z: z\in \tilde{R}^n_\Delta, z_1=\sigma^{\varepsilon d} y_2\},\]
    where $\varepsilon:=1$ if $A_1>_R0$ and $\varepsilon:=-1$ if $A_1<_R0$.
\end{lemma}
\begin{proof}
Observe that
\[\phi(x;y)\leftrightarrow (y_2=N\wedge \phi_1(x;y))\vee (y_2\in\tilde{R}\wedge y_2>N\wedge \phi_2(x;y)).\]
Now apply Lemma \ref{boolcomb}.
\end{proof}
%Now suppose that $A_1<0$. Let $N:=(\max_{\textbf{A}} R^n_\Delta)_1$ (so $N=\sigma^{n\Delta}(\min R)$). If $y_2\not\in R \vee y_2< N$, then $\phi(x;y)\leftrightarrow\bot$. If $y_2=N$, then
% \[\phi(x;y)\leftrightarrow x-y_1> \min\{\textbf{A}\cdot z: z\in R^n_\Delta, z_1= N\}.\]
% Finally, if $y_2\in R \wedge y_2>N$, then $\phi(x;y)$ is equivalent to
%     \[\min\{\textbf{A}\cdot z: z\in R^n_\Delta, z_1=y_2\}<x-y_1\leq \min\{\textbf{A}\cdot z: z\in R^n_\Delta, z_1=\sigma^{-1} y_2\},\]
% which clearly admits a strong honest definition, so we are done.
% Recall that our goal is to prove that $(F_n)$ is true for all $n\in\N^+$. We recall the statement of $(F_n)$ here for convenience:\\

% Let $\tilde{R}\subseteq^d R$ for some $d\in\N^+$. Let $\textbf{A}$ be an $n$-tuple of non-zero operators, $\textbf{B}$ be an $n$-tuple of operators, and $\square\in\{<,>\}$. Let $t\in\{0,1\}$, with $t=1$ unless $\textbf{B}=\textbf{F}^i$ for some $1\leq i\leq n$. Let $\Delta\in d\N$ be sufficiently large. Then the formula
%     \[tx-y_2\;\square\; \textbf{B}\cdot P_\Delta(x-y_1;\textbf{A},\tilde{R})\]
%     has a strong honest definition, where $\abs{x}=1$.\\
In the following theorem, we construct strong honest definitions for a class of formulas that includes all $(F_1;\textbf{A},\textbf{B},\tilde{R},\Delta)$ formulas with $\Delta$ sufficiently large (this inclusion is spelt out in Corollary \ref{onedimension}).
%; this can be thought of as a stronger version of $(F_1)$.
% By a \textit{generalised operator} on $R$ we mean a function $R\to\Z$ of the form $a_n \sigma^n +\cdots+a_{-n} \sigma^{-n}$, where $a_n, ..., a_{-n}\in\Z$.
\begin{theorem}\label{catchall}
    Let $\theta(x;y)$ be a formula with $\abs{x}=1$, and suppose the formulas $\theta(x;y)$ and $\theta'(x;w,y):=\theta(x-w;y)$ both have strong honest definitions, where $\abs{w}=1$. Let $\gamma(x;y^{(1)},...,y^{(k)})$ be a strong honest definition for $\theta$.
    % , respectively given by $\gamma(x;y^{(1)},...,y^{(k)})$ and $\zeta(x;w^{(1)},y^{(1)},...,w^{(l)},y^{(l)})$.
    
    Let $d,n\in\N^+$, $\tilde{R}\subseteq^d R$, $\textbf{A}$ be an $n$-tuple of non-zero operators, and let $\Lambda\in\N$ be as in Lemma \ref{innocuous}. Let $\Delta\in d\N$ be sufficiently large, $t\in\Z$, and $f$ be an $\mathcal{L}$-definable function of arity 1. Then the formula
    \[\phi(x;w,y):=\theta\left(tx-f(P^1_\Delta(x-w;\textbf{A}));y\right)\]
    has a system of strong honest definitions $\{\zeta_{I_0J_0\cdots I_\Lambda J_\Lambda K}: I_\alpha\sqcup J_\alpha\subseteq [k]\text{ for all }0\leq \alpha\leq \Lambda, K\subseteq \{0, ..., \Lambda\}\}$, where $\zeta_{I_0J_0\cdots I_\Lambda J_\Lambda K}(x;...)$ is given by the conjunction of the following:
    \begin{enumerate}[(i)]
        \item A strong honest definition $\zeta_1(x;...)$ for the basic formula $\phi_1(x;w,y):=x\leq w$;
        \item A strong honest definition $\zeta_2(x;...)$ for the formula $\phi_2(x;w,y):=\theta(tx-f(\sigma^{n\Delta}(\min \tilde{R}));y)$, which exists since the formula is a descendant of $\theta$;
        \item For each $0\leq \alpha\leq \Lambda$, a strong honest definition $\zeta^\alpha_3(x;...)$ for the formula $\phi^\alpha_3(x;w,y,w',y'):=\theta'(tx;f(\sigma^\alpha P^1_\Delta(w'-w;\textbf{A})),y)$, which exists since the formula is a descendant of $\theta'$;
        \item For each $0\leq \alpha\leq \Lambda$, a strong honest definition $\zeta^\alpha_4(x;...)$ for the formula $\phi^\alpha_4(x;w,y,w',y'):=P^1_\Delta(x-w;\textbf{A})=\sigma^\alpha P^1_\Delta(w'-w;\textbf{A})$, which exists by Lemma \ref{firstcoorddef} (and Lemma \ref{descendants});
                \item For each $0\leq \alpha\leq \Lambda$, the formula
                \[\zeta^\alpha_{I_\alpha J_\alpha}(x;w,y^{(i)}: i\in [k]\setminus(I_\alpha\cup J_\alpha)):=\gamma(tx-f(\sigma^\alpha P^1_\Delta(x-w;\textbf{A}));\hat{y}^{(1)}, ..., \hat{y}^{(k)}),\]
                where, for $1\leq i\leq k$,
        \[\hat{y}^{(i)}:=\begin{cases}
            (0, ..., 0)&\text{if }i\in I_\alpha,\\
            (1, ..., 1)&\text{if }i\in J_\alpha,\\
            y_i&\text{otherwise};
        \end{cases}\]
        and
        \item The formula 
        \[\bigwedge_{\alpha\in K}P^1_\Delta(x-w_\alpha;\textbf{A})=P^1_\Delta(x-w'_\alpha;\textbf{A})=\sigma^\alpha P^1_\Delta(x-w''_\alpha;\textbf{A}).\]
    \end{enumerate}
    %\zeta_I(x;w_\alpha,y_\alpha,w'_\alpha,y'_\alpha,w''_\alpha,y''_\alpha:\alpha\in I):=
\end{theorem}
Let us first describe the idea of the proof, assuming $A_1>_R 0$ for the purpose of this discussion. We wish to replace $P^1_\Delta(x-w;\textbf{A})$ in $\phi(x;w,y)$ with a more tractable expression; we can do so by Lemma \ref{innocuous}, which gives us $\Lambda\in\N$ satisfying the following.

Let $x_0\in\Z$ and $S\subseteq \Z^{1+\abs{y}}$ with $2\leq\abs{S}<\infty$. Here and henceforth, when it is written that $(b,a)\in S$, it is understood that $\abs{b}=1$ and $\abs{a}=\abs{y}$. Let $u:=\max(\{b: (b,a)\in S, x_0>b\}\cup\{\min_{(b,a)\in S}b\})$. For all $(b,a)\in S$, if $b>u$ then $P^1_\Delta(x-b;\textbf{A})=\sigma^{n\Delta}(\min \tilde{R})$, and if $b\leq u$ then either
\begin{enumerate}[(i)]
    \item $P^1_\Delta(x-b;\textbf{A})=\sigma^\alpha P^1_\Delta(x-u;\textbf{A})$ for some $0\leq \alpha\leq \Lambda$; or
    \item $P^1_\Delta(x-b;\textbf{A})=\sigma^\alpha P^1_\Delta(u-b;\textbf{A})$ for some $0\leq \alpha\leq \Lambda$.
\end{enumerate}
In each of these cases, replacing $P^1_\Delta(x-b;\textbf{A})$ with the respective expression gives a formula for which we have strong honest definitions. 
\begin{proof}[Proof of Theorem \ref{catchall}]
    By Lemma \ref{domination}, we may assume $\Delta\in d\N$ is sufficiently large that $\min \textbf{A}\cdot \tilde{R}^n_\Delta>0$ if $A_1>_R 0$ and $\max \textbf{A}\cdot \tilde{R}^n_\Delta<0$ if $A_1<_R 0$.
    
    Fix $x_0\in\Z$ and $S\subseteq \Z^{1+\abs{y}}$ with $2\leq \abs{S}<\infty$. Write $\pi_1(S):=\{b: (b,a)\in S\}$ and $\pi_2(S):=\{a: (b,a)\in S\}$. Let $(b_0,a_0)\in S$ be such that
    \[b_0=\begin{cases}
        \min \pi_1(S)&\text{if }A_1>_R0,\\
        \max \pi_1(S)&\text{if }A_1<_R0.
    \end{cases}\]
    Define
    \[u:=\begin{cases}
        \max(\{b\in \pi_1(S): x_0>b\}\cup\{b_0\})&\text{if }A_1>_R0,\\
        \min(\{b\in \pi_1(S): x_0\leq b\}\cup\{b_0\})&\text{if }A_1<_R0.
    \end{cases}\]
    
    % Partition $\Z$ into open intervals and singletons with endpoints
    % \[\begin{cases}
    %     \{b+\min\textbf{A}\cdot \tilde{R}^n_\Delta: (b,a)\in S\}&\text{if }A_1>0,\\
    %     \{b+\max\textbf{A}\cdot \tilde{R}^n_\Delta: (b,a)\in S\}&\text{if }A_1<0,
    % \end{cases}\]
    % and let the partition piece containing $x_0$ be $K$.
    For $i\in \{1,2\}$, let $c_i\in S^{<\omega}$ be such that $x_0\models \zeta_i(x; c_i)$ and $\zeta_i(x; c_i)\vdash \text{tp}_{\phi_i}(x_0/S)$. For $i\in \{3,4\}$ and $0\leq \alpha\leq \Lambda$, let $c_i^\alpha\in (S^2)^{<\omega}$ be such that $x_0\models \zeta_i^\alpha(x; c_i^\alpha)$ and $\zeta_i^\alpha(x; c_i^\alpha)\vdash \text{tp}_{\phi_i^\alpha}(x_0/S^2)$. 

    Let $T:=\pi_2(S)\cup\{(0, ..., 0), (1, ..., 1)\}\subseteq \Z^y$. Then $\abs{T}\geq 2$, so for $0\leq \alpha\leq \Lambda$, there is $e^\alpha\in T^k$ such that $tx_0-f(\sigma^\alpha P^1_\Delta(x_0-u;\textbf{A}))\models \gamma(x;e^\alpha)$ and $\gamma(x;e^\alpha)\vdash \text{tp}_\theta(tx_0-f(\sigma^\alpha P^1_\Delta(x_0-u;\textbf{A}))/T)$. There are disjoint $I_\alpha,J_\alpha\subseteq [k]$ and $c^\alpha\in \pi_2(S)^{<\omega}$ such that
    \[\gamma(tx-f(\sigma^\alpha P^1_\Delta(x-u;\textbf{A}));e^\alpha)=\zeta^\alpha_{I_\alpha J_\alpha}(x;u,c^\alpha),\]
    whence $x_0\models \zeta^\alpha_{I_\alpha J_\alpha}(x;u,c^\alpha)$.
    
    For $0\leq\alpha\leq\Lambda$, let $S_\alpha:=\left\lbrace b\in\pi_1(S): P^1_\Delta(x_0-b;\textbf{A})=\sigma^\alpha P^1_\Delta(x_0-u;\textbf{A})\right\rbrace\subseteq\Z$, and if $S_\alpha\neq \emptyset$, let $l^{(\alpha)}:=\min S_\alpha$ and $r^{(\alpha)}:=\max S_\alpha$.
    
    Then we have that
    \begin{align*}
        x_0\models \bigwedge_{i=1}^2\zeta_i(x; c_i)&\wedge \bigwedge_{i=3}^4\bigwedge_{\alpha=0}^\Lambda \zeta_i^\alpha(x; c_i^\alpha)\wedge \bigwedge_{\alpha=0}^\Lambda \zeta^\alpha_{I_\alpha J_\alpha}(x;u,c^\alpha)\\
        &\wedge \bigwedge_{\substack{\alpha=0\\S_\alpha\neq\emptyset}}^\Lambda P^1_\Delta(x-l^{(\alpha)};\textbf{A})=P^1_\Delta(x-r^{(\alpha)};\textbf{A})=\sigma^\alpha P^1_\Delta(x-u;\textbf{A}),
    \end{align*}
    and we claim that this formula, which is an instance of $\zeta_{I_0J_0\cdots I_\Lambda J_\Lambda K}$ for $K:=\{0\leq\alpha\leq\Lambda: S_\alpha\neq\emptyset\}$, entails $\text{tp}_{\phi}(x_0/S)$.

    Indeed, suppose $x_1\in\Z$ satisfies this formula, and let $(b',a')\in S$. We wish to show that $\phi(x_0;b',a')$ if and only if $\phi(x_1;b',a')$. Since $x_0,x_1\models \zeta_1(x; c_1)$, we have that for $i\in\{0,1\}$,
    \[u=\begin{cases}
        \max(\{b\in \pi_1(S): x_i>b\}\cup\{b_0\})&\text{if }A_1>_R0,\\
        \min(\{b\in \pi_1(S): x_i\leq b\}\cup\{b_0\})&\text{if }A_1<_R0.
    \end{cases}\]
    
    Suppose $b'>u$ and $A_1>_R 0$. Then, for $i\in\{0,1\}$, we have $x_i-b'\leq 0<\min \textbf{A}\cdot \tilde{R}^n_\Delta$ and so $P^1_\Delta(x_i-b';\textbf{A})=\sigma^{n\Delta}(\min\tilde{R})$ by Remark \ref{Premark}. Thus, for $i\in\{0,1\}$, we have $\phi(x_i;b',a')\Leftrightarrow \phi_2(x_i;b',a')$. But now, since $x_0,x_1\models \zeta_2(x;c_2)$, we have $\phi_2(x_0;b',a')\Leftrightarrow \phi_2(x_1;b',a')$, whence $\phi(x_0;b',a')\Leftrightarrow \phi(x_1;b',a')$.

    The case where $b'<u$ and $A_1<_R 0$ is similar, so henceforth suppose either ($b'\leq u$ and $A_1>_R 0$) or ($b'\geq u$ and $A_1<_R 0$). By Lemma \ref{innocuous}, we have either
    \begin{enumerate}[(i)]
        \item That $P^1_\Delta(x_0-b';\textbf{A})=\sigma^\alpha P^1_\Delta(x_0-u;\textbf{A})$ for some $0\leq \alpha\leq \Lambda$; or
        \item That $P^1_\Delta(x_0-b';\textbf{A})=\sigma^\alpha P^1_\Delta(u-b';\textbf{A})$ for some $0\leq \alpha\leq \Lambda$.
    \end{enumerate}
    
    If $0\leq \alpha\leq \Lambda$ is such that $P^1_\Delta(x_0-b';\textbf{A})=\sigma^\alpha P^1_\Delta(u-b';\textbf{A})$, then since $x_0,x_1\models \zeta_4^\alpha(x;c_4^\alpha)$, we have $P^1_\Delta(x_0-b';\textbf{A})=P^1_\Delta(x_1-b';\textbf{A})=\sigma^\alpha P^1_\Delta(u-b';\textbf{A})$. Thus, for $i\in\{0,1\}$, we have
    \[\phi(x_i;b',a')\Leftrightarrow \theta\left(tx_i-f(\sigma^\alpha P^1_\Delta(u-b';\textbf{A}));a'\right),\]
    and so
    \[\phi(x_i;b',a')\Leftrightarrow\theta'\left(tx_i;f(\sigma^\alpha P^1_\Delta(u-b';\textbf{A})),a'\right).\]
    But now, since $x_0,x_1\models \zeta_3^\alpha(x;c_3^\alpha)$, we have
    \[\theta'\left(tx_0;f(\sigma^\alpha P^1_\Delta(u-b';\textbf{A})),a'\right)\Leftrightarrow\theta'\left(tx_1;f(\sigma^\alpha P^1_\Delta(u-b';\textbf{A})),a'\right),\]
    whence $\phi(x_0;b',a')\Leftrightarrow \phi(x_1;b',a')$.
    
    Suppose instead that we have $P^1_\Delta(x_0-b';\textbf{A})=\sigma^\alpha P^1_\Delta(x_0-u;\textbf{A})$ for some $0\leq\alpha\leq \Lambda$, and so $l^{(\alpha)}\leq b'\leq r^{(\alpha)}$. But now
    \[x_0,x_1\models P^1_\Delta(x-l^{(\alpha)};\textbf{A})=P^1_\Delta(x-r^{(\alpha)};\textbf{A})=\sigma^\alpha P^1_\Delta(x-u;\textbf{A}),\]
    so by Lemma \ref{ordering} we must have $x_0,x_1\models P^1_\Delta(x-b';\textbf{A})=\sigma^\alpha P^1_\Delta(x-u;\textbf{A})$. Thus, for $i\in\{0,1\}$,
    \[\phi(x_i;b',a')\Leftrightarrow \theta\left(tx_i-f(\sigma^\alpha P^1_\Delta(x_i-u;\textbf{A}));a'\right).\]
    But now, since $x_0,x_1\models \zeta_{I_\alpha J_\alpha}^\alpha(x;u,c^\alpha)$, we have $x_0,x_1\models\gamma(tx-f(\sigma^\alpha P^1_\Delta(x-u;\textbf{A}));e^\alpha)$ and so
    \[\theta\left(tx_0-f(\sigma^\alpha P^1_\Delta(x_0-u;\textbf{A}));a'\right)\Leftrightarrow \theta\left(tx_1-f(\sigma^\alpha P^1_\Delta(x_1-u;\textbf{A}));a'\right),\]
    whence $\phi(x_0;b',a')\Leftrightarrow \phi(x_1;b',a')$, which finishes the proof.
\end{proof}
% \begin{remark}\label{remarknewshd}
%     Notice that in the proof above, the strong honest definition $\psi(x;...)$ is a Boolean combination of formulas of the form
%     \begin{equation}\label{newshd}\tag{$\dag\dag$}
%         P^1_\Delta(x-w;\textbf{A})=\sigma^\alpha P^1_\Delta(x-w';\textbf{A})
%     \end{equation}
%     and given strong honest definitions for other formulas.
% \end{remark}
\begin{cor}\label{onedimension}
    Let $\tilde{R}\subseteq^d R$ for some $d\in\N^+$. Let $t\in\Z$, $\textbf{A}$ be a tuple of non-zero operators, $f$ be an $\mathcal{L}$-definable function of arity 1, and $\square\in\{<,>\}$. Let $\Delta\in d\N$ be sufficiently large. Then the formula $\phi(x;y):=tx-y_2\;\square\; f(P^1_\Delta(x-y_1;\textbf{A}))$ has a strong honest definition. In particular, given operators $A,B$ with $A\neq_R 0$, every $(F_1; A,B,\Delta,\tilde{R})$ formula with $\Delta\in d\N$ sufficiently large has a strong honest definition.
\end{cor}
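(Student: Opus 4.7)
The plan is to deduce this as a direct instance of Theorem \ref{catchall} by choosing the right $\theta$. The key observation is that the inequality $Ex-y_2\;\square\;f(P^1_\Delta(x-y_1;\textbf{A}))$ can be rearranged as $Ex-f(P^1_\Delta(x-y_1;\textbf{A}))\;\square\;y_2$ (rearranging $-f(\cdot)$ and $-y_2$ across $\square$ preserves its direction). This brings the formula into the shape required by Theorem \ref{catchall}, with the outer ``wrapper'' $\theta$ now containing only Presburger ingredients.

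Concretely, I would set
\[\theta(z;u):= z\;\square\;u\qquad\text{and}\qquad w:=y_1,\ u:=y_2,\]
so that
\[\phi(x;y_1,y_2)\;\equiv\;\theta\bigl(Ex-f(P^1_\Delta(x-w;\textbf{A}));u\bigr),\]
which is exactly the form appearing in Theorem \ref{catchall}.

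It remains only to check the hypotheses of Theorem \ref{catchall}. We need that both $\theta(z;u)$ and $\theta'(z;w,u):=\theta(z-w;u)=(z-w)\;\square\;u$ admit strong honest definitions. But these are both $\mathcal{L}^0$-formulas, and Presburger arithmetic is distal (as noted just after the definition of $(E_0)$), so each admits a strong honest definition. Applying Theorem \ref{catchall} then yields a strong honest definition for $\phi$, finishing the proof.

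There is essentially no obstacle here: the content of the corollary is precisely that Theorem \ref{catchall} generalises $(F_1)$ by allowing the outer comparison to be replaced by any composition involving an $\mathcal{L}$-definable $f$, and the trivial Presburger wrapper $z\;\square\;u$ is already enough to make the reduction work in both cases $\square\in\{<,>\}$.
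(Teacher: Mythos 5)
Your proof is correct and is essentially identical to the paper's own one-line proof, which applies Theorem~\ref{catchall} with $\theta(x;y'):=x\;\square\;y'$; you have merely spelled out the rearrangement $Ex-y_2\;\square\;f(\cdot)\leftrightarrow Ex-f(\cdot)\;\square\;y_2$ and the verification that the Presburger formulas $\theta$ and $\theta'$ have strong honest definitions.
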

\begin{proof}
    This follows directly from Theorem \ref{catchall} since, for $\theta(x;y_2):=x\;\square\;y_2$,
    \[\phi(x;y)=\theta\left(tx-f(P^1_\Delta(x-y_1;\textbf{A}));y_2\right),\]
    and the formulas $\theta(x;y_2)$ and $\theta'(x;w,y_2):=\theta(x-w;y_2)$ have strong honest definitions by Example \ref{LOexample}.
\end{proof}
Recall that, given an $n$-tuple $\nu=(\nu_1,...,\nu_n)$, we let $\nu_{>1}$ denote $(\nu_2,...,\nu_n)$.
\begin{lemma}\label{reduction}
    Let $d,n\in\N^+$ with $n\geq 2$, $\tilde{R}\subseteq^d R$, $\textbf{A}$ be an $n$-tuple of non-zero operators, and $\Delta\in d\N$ be sufficiently large. Let $a\in\Z$ be such that
    \[a>\inf \textbf{A}\cdot \tilde{R}^n_\Delta\wedge a\leq \max \{\textbf{A}\cdot z: z\in \tilde{R}^n_\Delta, z_1=P^1_\Delta(a;\textbf{A})\}.\]
    Then $P_\Delta\left(a-A_1 P^1_\Delta(a;\textbf{A});\textbf{A}_{>1}\right)=P^{>1}_\Delta(a;\textbf{A})$.
\end{lemma}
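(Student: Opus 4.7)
Write $p:=P_\Delta(a;\textbf{A})$ and $b:=a-A_1 p_1$. The plan is to verify that the right-hand side $p_{>1}$ satisfies the defining property of the left-hand side $P_\Delta(b;\textbf{A}_{>1})$. Since $p\in\tilde{R}^n_\Delta$, we have $p_{>1}\in\tilde{R}^{n-1}_\Delta$, and by Lemma~\ref{Plemma}(i) together with $a>\inf\textbf{A}\cdot\tilde{R}^n_\Delta$ we have $\textbf{A}\cdot p<a$, so $\textbf{A}_{>1}\cdot p_{>1}<b$ and in particular $b>\inf\textbf{A}_{>1}\cdot\tilde{R}^{n-1}_\Delta$, placing $P_\Delta(b;\textbf{A}_{>1})$ in the ``genuine argmax'' case of Definition~\ref{PDeltadefn}. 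The claim thus reduces to a maximality check, which I take in contrapositive form: for every $q\in\tilde{R}^{n-1}_\Delta$ with $\textbf{A}_{>1}\cdot q>\textbf{A}_{>1}\cdot p_{>1}$, I must show $\textbf{A}_{>1}\cdot q\geq b$.

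I split on whether $q_1\leq\sigma^{-\Delta}p_1$. If so, then $(p_1,q)\in\tilde{R}^n_\Delta$ and $\textbf{A}\cdot(p_1,q)=A_1 p_1+\textbf{A}_{>1}\cdot q$ strictly exceeds $\textbf{A}\cdot p$, so the maximality defining $p=P_\Delta(a;\textbf{A})$ forces $\textbf{A}\cdot(p_1,q)\geq a$, which rearranges to $\textbf{A}_{>1}\cdot q\geq b$ and settles this case without invoking the second hypothesis. Otherwise $q_1\geq\sigma^{d-\Delta}p_1$, and the sign of $A_2$ matters. If $A_2<0$, then comparing $q$ and $p_{>1}$ via Lemma~\ref{ordering} (the first differing coordinate is $1$, with $q_1>\sigma^{-\Delta}p_1\geq p_2$) yields $\textbf{A}_{>1}\cdot q<\textbf{A}_{>1}\cdot p_{>1}$, contradicting our assumption. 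So $A_2>0$, and this is where the second hypothesis enters: subtracting $A_1 p_1$ from both sides of $a\leq\max\{\textbf{A}\cdot z:z\in\tilde{R}^n_\Delta,\,z_1=p_1\}$ gives $b\leq\max\{\textbf{A}_{>1}\cdot q':q'\in\tilde{R}^{n-1}_\Delta,\,q'_1\leq\sigma^{-\Delta}p_1\}$. Lemma~\ref{domination} bounds this max above by $(1+\varepsilon)A_2\sigma^{-\Delta}p_1$ and bounds $\textbf{A}_{>1}\cdot q$ below by $(1-\varepsilon)A_2\sigma^{d-\Delta}p_1$, while Lemma~\ref{monotone} applied to the identity operator supplies $r\in\Q_{>1}$ with $\sigma^d z>r^d z$ for cofinitely many $z\in R$; choosing $\varepsilon$ small enough that $(1-\varepsilon)r^d\geq 1+\varepsilon$ and $\Delta$ correspondingly large yields $\textbf{A}_{>1}\cdot q>b$, as required.

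The main obstacle is precisely this $A_2>0$ sub-case of $q_1>\sigma^{-\Delta}p_1$. Case~I (the ``in-range'' first coordinate) is a clean consequence of the definition of $p$, and the $A_2<0$ branch is immediately killed by Lemma~\ref{ordering}; the second hypothesis on $a$ exists specifically to tame the remaining sub-case, where a much larger first coordinate for $q$ could in principle yield a larger $\textbf{A}_{>1}\cdot q$ while still fitting strictly below $b$. The quantitative subtlety is that the error factor $(1+\varepsilon)/(1-\varepsilon)$ from Lemma~\ref{domination} must be strictly dominated by the gap $r^d>1$ from Lemma~\ref{monotone}, so $\varepsilon$ is fixed first (depending only on $d$ and $r$) and then ``$\Delta$ sufficiently large'' is chosen to accommodate Lemma~\ref{domination}'s error bound and to ensure $p_1$ lies in the cofinitary regime of Lemma~\ref{monotone}.
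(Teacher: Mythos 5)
Your proof takes a genuinely different route from the paper's. The paper proceeds "globally": it writes out the characterising formula $(\star)$ for $w = P_\Delta(b;\textbf{A}_{>1})$ where $b := a - A_1 P^1_\Delta(a;\textbf{A})$, then uses the second hypothesis together with Lemma \ref{ordering} (applied against the bracketing tuples $u = \min_\textbf{A}\{z: z_1 = P^1_\Delta(a;\textbf{A})\}$ and $v = \max_\textbf{A}\{z: z_1 = P^1_\Delta(a;\textbf{A})\}$) to show that both the argmax candidate $w$ and the universally quantified $z$ in $(\star)$ can be localised to $z_1 \leq \sigma^{-\Delta}P^1_\Delta(a;\textbf{A})$; once localised, $(\star)$ becomes precisely the characterising formula for $P^{>1}_\Delta(a;\textbf{A})$. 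No quantitative estimate beyond Lemma \ref{ordering} is needed. You instead verify that $p_{>1}$ satisfies the argmax property directly, splitting on $q_1 \leq \sigma^{-\Delta}p_1$ versus not, and handling the "out of range" case by $\mathrm{sgn}(A_2)$: Lemma \ref{ordering} kills $A_2 < 0$, while $A_2 > 0$ is handled with a genuine quantitative estimate combining Lemmas \ref{domination} and \ref{monotone}. Both routes are valid; the paper's is cleaner in that it uses only Lemma \ref{ordering} and never needs to juggle $\varepsilon$, $r$, and $\Delta$, whereas yours is arguably more transparent about \emph{why} the second hypothesis is needed, and it is nice that Case I ($q_1 \leq \sigma^{-\Delta}p_1$) exposes that only the first hypothesis is being used there.

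There is one real flaw in the $A_2 > 0$ sub-case: you invoke Lemma \ref{monotone} for the \emph{identity} operator, obtaining $r \in \Q_{>1}$ with $\sigma^d z > r^d z$, but then implicitly conclude $A_2\sigma^{d-\Delta}p_1 > r^d A_2\sigma^{-\Delta}p_1$. This does not follow: $A_2$ is a $\Z$-linear combination of shifts, not multiplication by a scalar, so it does not commute with multiplication by $r^d$, and the element $r^d\sigma^{-\Delta}p_1$ need not even lie in $R$. The inequality you actually need, $A_2\sigma^d w > r^d A_2 w$ for cofinitely many $w \in R$, is precisely Lemma \ref{monotone} applied to $A_2$ itself (legitimate, since $A_2 >_R 0$ in this sub-case). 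The fix is a one-word change and does not disturb the order of quantification you set out ($A_2 \mapsto r \mapsto \varepsilon \mapsto \Delta$), but as written the step is incorrect.
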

\begin{proof}
    Let $u=P_\Delta(a;\textbf{A})$. Then
    \[\textbf{A}_{>1}\cdot u_{>1}+A_1 P^1_\Delta(a;\textbf{A})=\textbf{A}\cdot P_\Delta(a;\textbf{A})<a,\]
    and so $\textbf{A}_{>1}\cdot u_{>1}<a-A_1P^1_\Delta(a;\textbf{A})$. Thus, to show that $u_{>1}=P_\Delta\left(a-A_1 P^1_\Delta(a;\textbf{A});\textbf{A}_{>1}\right)$, it suffices to show that there is no $w\in \tilde{R}^{n-1}_\Delta$ such that $\textbf{A}_{>1}\cdot u_{>1}<\textbf{A}_{>1}\cdot w<a-A_1 P^1_\Delta(a;\textbf{A})$.

    Towards a contradiction, suppose such a $w\in \tilde{R}^{n-1}_\Delta$ existed, so $\textbf{A}\cdot u<\textbf{A}\cdot (P^1_\Delta(a;\textbf{A}),w)<a$. By definition of $u=P_\Delta(a;\textbf{A})$, we must have that $(P^1_\Delta(a;\textbf{A}),w)\not\in\tilde{R}^n_\Delta$, and so $w_1>\sigma^{-\Delta} P^1_\Delta(a;\textbf{A})$.
    
    Recalling the relevant notation from Definition \ref{defnmM}, let $v:=\max_{\textbf{A}} \{z\in \tilde{R}^n_\Delta: z_1=P^1_\Delta(a;\textbf{A})\}$, so by assumption, $\textbf{A}\cdot u<\textbf{A}\cdot (P^1_\Delta(a;\textbf{A}),w)<a\leq \textbf{A}\cdot v$. But now, since $u_1=v_1=P^1_\Delta(a;\textbf{A})$, we have
    \[\textbf{A}_{>1}\cdot u_{>1}<\textbf{A}_{>1}\cdot w<\textbf{A}_{>1}\cdot v_{>1},\]
    so by Lemma \ref{ordering}, $w_1\leq \max\{u_2, v_2\}\leq \sigma^{-\Delta}\max\{u_1, v_1\}=\sigma^{-\Delta} P^1_\Delta(a;\textbf{A})$, a contradiction.
\end{proof}
The following theorem describes how a strong honest definition for a $(F_n; ...)$ formula can be obtained from one for a $(F_{n-1}; ...)$ formula.
\begin{theorem}\label{multidimension}
    Let $d,n\in\N^+$ with $n\geq 2$, $\tilde{R}\subseteq^d R$, $\textbf{A}$ be an $n$-tuple of non-zero operators, and $\textbf{B}$ be an $n$-tuple of operators. Let $t\in\{0,1\}$ with $t=1$ unless $\textbf{B}=\textbf{F}^i$ for some $1\leq i\leq n$. Suppose that, for all $\Delta\in d\N$ sufficiently large, the formula
    \[\theta(x;y_1, y_2):=tx-y_2< \textbf{B}_{>1}\cdot P_\Delta(x-y_1;\textbf{A}_{>1},\tilde{R})\]
    has a strong honest definition. Then, for all $\Delta\in d\N$ sufficiently large, the formula
    \[\phi(x;y_1, y_2):=tx-y_2< \textbf{B}\cdot P_\Delta(x-y_1;\textbf{A},\tilde{R})\]
    has a strong honest definition, given by a conjunction of copies of strong honest definitions for
    \[\begin{cases}
        \phi_0&\text{if }\textbf{B}=\textbf{F}^1,\\
        \phi_1, \phi_2, \phi_3, \phi_4, \phi_5, (\phi_6^\alpha, \phi_7^\alpha: 0\leq \alpha\leq \Lambda)&\text{if }\textbf{B}\neq\textbf{F}^1, A_1\neq_R B_1,\text{ and }t=1,\\
        \phi_1, \phi_2, \phi_3, \phi_4, \phi_8&\text{otherwise},
    \end{cases}\]
    where
    \begin{align*}
        \phi_0(x;y_1,y_2)&:=tx-y_2<P^1_\Delta(x-y_1;\textbf{A}),\\
        \phi_1(x;y_1,y_2)&:=\inf \textbf{A}\cdot \tilde{R}^n_\Delta<x-y_1,\\
        \phi_2(x;y_1,y_2)&:=tx-y_2<\textbf{B}\cdot (\sigma^{(n-i)\Delta}(\min \tilde{R}): 0\leq i< n),\\
        \phi_3(x;y_1,y_2)&:=x-y_1>\max\{\textbf{A}\cdot z: z\in \tilde{R}^n_\Delta, z_1=P^1_\Delta(x-y_1;\textbf{A})\},\\
        \phi_4(x;y_1,y_2)&:=tx-y_2 < \textbf{B}\cdot \max_{\textbf{A}}\{ z\in \tilde{R}^n_\Delta: z_1=P^1_\Delta(x-y_1;\textbf{A})\},\\
        \phi_5(x;y_1,y_2)&:=\begin{cases}
        P^1_\Delta(x-y_1;\textbf{A})>\sigma^{\Delta}P_\Delta(y_1-y_2;B_1-A_1)&\text{if }B_1-A_1>_R0,\\
        P^1_\Delta(x-y_1;\textbf{A})<\sigma^{-\Delta}P_\Delta(y_1-y_2;B_1-A_1)&\text{if }B_1-A_1<_R0,
        \end{cases}\\
        \phi^\alpha_6(x;y_1,y_2)&:=P^1_\Delta(x-y_1;\textbf{A})=\sigma^\alpha P_\Delta(y_1-y_2;B_1-A_1),\\
        \phi_7^\alpha(x;y_1,y_2)&:=tx-y_2-B_1 \sigma^\alpha P_\Delta(y_1-y_2;B_1-A_1)\\
        &\;\;\;\;\;\;\;\;\;\;\;\;\;\;\;\;\;\;\;\;\;\;\;< \textbf{B}_{>1}\cdot P_\Delta(x-y_1-A_1\sigma^\alpha P_\Delta(y_1-y_2;B_1-A_1);\textbf{A}_{>1}),\\
        \phi_8(x;y_1,y_2)&:=\theta(x-A_1P^1_\Delta(x-y_1;\textbf{A});y_1,y_2).
    \end{align*}
\end{theorem}
From this we immediately obtain the sufficiency criterion in Theorem \ref{sufficiency} as a corollary.
\begin{cor}\label{sufficiencycriterion}
    Let $d,n\in\N^+$, $\tilde{R}\subseteq^d R$, $\textbf{A}$ be an $n$-tuple of non-zero operators, and $\textbf{B}$ be an $n$-tuple of operators. Then, for all sufficiently large $\Delta\in d\N$, every $(F_n;\textbf{A},\textbf{B},\tilde{R},\Delta)$ formula has a strong honest definition.
\end{cor}
\begin{proof}
    Induct on $n\in\N^+$, with Corollary \ref{onedimension} as the base case $n=1$ and Theorem \ref{multidimension} as the inductive step.
\end{proof}
Before proving Theorem \ref{multidimension}, let us first justify that the formulas $\phi_0, ..., \phi_8$ indeed have strong honest definitions, assuming $\Delta\in d\N$ is sufficiently large.

The formulas $\phi_0$, $\phi_3$, $\phi_4$, and $\phi_5$ have strong honest definitions by Corollary \ref{onedimension} and Lemma \ref{descendants}, applied with $\Delta\in d\N$ sufficiently large. As an example, to show that $\phi_3$ has a strong honest definition (assuming $\Delta\in d\N$ is sufficiently large), one applies Corollary \ref{onedimension} with $t=1$, $\square$ as $>$, and $f$ mapping $u\mapsto\max\{\textbf{A}\cdot z: z\in \tilde{R}^n_\Delta, z_1=u\}$ if $u\in R$ and $u\mapsto 0$ otherwise.
% \[f(u)=\begin{cases}
%     \max\{\textbf{A}\cdot z: z\in \tilde{R}^n_\Delta, z_1=u\}&\text{if }u\in R,\\
%     0&\text{otherwise}.
% \end{cases}\]

The formulas $\phi_1$ and $\phi_2$ are basic formulas, so have strong honest definitions.

For $0\leq\alpha\leq\Lambda$, the formula $\phi_6^\alpha$ has a strong honest definition by Lemmas \ref{firstcoorddef} and \ref{descendants}, since it is a descendant of the formula $P^1_\Delta(x-y_1;\textbf{A})=y_2$.

For $0\leq\alpha\leq\Lambda$, the formula $\phi_7^\alpha$ has a strong honest definition by Lemma \ref{descendants}, since it is a descendant of the formula $\theta(x;y_1,y_2)$, which is assumed to have a strong honest definition.

Finally, consider the formula $\phi_8$. It is a descendant of the formula
\[\phi'_8(x;w,y_1,y_2):=\theta(x-A_1P^1_\Delta(x-w;\textbf{A});y_1,y_2),\]
so by Lemma \ref{descendants} it suffices to show that $\phi'_8$ has a strong honest definition. Now the formula $\theta'(x;w,y_1,y_2):=\theta(x-w;y_1,y_2)$ is easily seen to be a descendant of $\theta$, which is assumed to have a strong honest definition, and hence so does $\theta'$ by Lemma \ref{descendants}. Thus, the formula $\phi'_8$ has a strong honest definition by Theorem \ref{catchall}, applied with $t=1$ and $f$ mapping $u\mapsto A_1u$ if $u\in R$ and $u\mapsto 0$ otherwise.
% \[f(u)=\begin{cases}
%     A_1u&\text{if }u\in R,\\
%     0&\text{otherwise}.
% \end{cases}\]

Thus, Theorem \ref{multidimension} is well-formulated; let us prove it.
\begin{proof}[Proof of Theorem \ref{multidimension}]
    Let $\Delta\in d\N$ be sufficiently large such that the function $z\mapsto \textbf{A}\cdot z$ is injective on $R^n_\Delta$, $\theta(x;y_1,y_2)$ has a strong honest definition, and all the strong honest definitions exist that are claimed to exist in the statement of the theorem. We will show that $\phi(x;y_1,y_2)$ is a Boolean combination of copies of
    \[\begin{cases}
        \phi_0&\text{if }\textbf{B}=\textbf{F}^1,\\
        \phi_1, \phi_2, \phi_3, \phi_4, \phi_5, (\phi_6^\alpha, \phi_7^\alpha: 0\leq \alpha\leq \Lambda)&\text{if }\textbf{B}\neq\textbf{F}^1, A_1\neq_R B_1,\text{ and }t=1,\\
        \phi_1, \phi_2, \phi_3, \phi_4, \phi_8&\text{otherwise},
    \end{cases}\]
    which suffices by Lemma \ref{boolcomb}.

    If $\textbf{B}=\textbf{F}^1$ then $\phi(x;y)\leftrightarrow \phi_0(x;y)$, so henceforth suppose $\textbf{B}\neq\textbf{F}^1$.
    
    If $\neg \phi_1(x;y)$ holds then $x-y_1\leq \inf \textbf{A}\cdot \tilde{R}^n_\Delta$, so by Remark \ref{Premark} we have $\phi(x;y)\leftrightarrow \phi_2(x;y)$. Henceforth condition on $\phi_1(x;y)$, whence by Lemma \ref{Plemma},
    \begin{equation}\tag{$\ast$}\label{eqnPQ}
        \textbf{A}\cdot P_\Delta(x-y_1;\textbf{A})<x-y_1.
    \end{equation} 

    If $\phi_3(x;y)$ holds, then $P_\Delta(x-y_1;\textbf{A})=\max_{\textbf{A}}\{ z\in \tilde{R}^n_\Delta: z_1=P^1_\Delta(x-y_1;\textbf{A})\}$ and so $\phi(x;y)\leftrightarrow \phi_4(x;y)$. Henceforth condition on $\neg\phi_3(x;y)$. Note then that, assuming $\Delta\in d\N$ is sufficiently large, Lemma \ref{reduction} implies
    \begin{equation}\tag{$\ast\ast$}\label{eqninductive}
        P^{>1}_\Delta(x-y_1;\textbf{A})=P_\Delta\left(x-y_1-A_1P^1_\Delta(x-y_1;\textbf{A});\textbf{A}_{>1}\right).
    \end{equation}    
    We now split into two cases: $A_1\neq _R B_1\wedge t=1$, and $(A_1=_R B_1 \wedge t=1)\vee(\textbf{B}=\textbf{F}^i \wedge t=0)$.\\
    
    \underline{Case 1: $A_1\neq_R B_1\wedge t=1$.} We will show that
    \[\phi(x;y)\leftrightarrow \phi_5(x;y)\vee\bigvee_{\alpha=-\Delta}^\Delta (\phi_6^\alpha(x;y)\wedge\phi_7^\alpha(x;y)).\]
    
    Let $\varepsilon:=1$ if $B_1-A_1>_R 0$, and $\varepsilon:=-1$ if $B_1-A_1<_R 0$.
    
    Firstly, suppose $\phi_\bot(x;y)$ holds, where
    \[\phi_\bot(x;y):=\begin{cases}
        P^1_\Delta(x-y_1;\textbf{A})<\sigma^{-\Delta}P_\Delta(y_1-y_2;B_1-A_1)&\text{if }B_1-A_1>_R0,\\
        P^1_\Delta(x-y_1;\textbf{A})>\sigma^{\Delta}P_\Delta(y_1-y_2;B_1-A_1)&\text{if }B_1-A_1<_R0.
    \end{cases}\]
    In particular, $\inf (B_1-A_1)\tilde{R}^1_\Delta < y_1-y_2$ by Remark \ref{Premark}, whence, for $\Delta\in d\N$ sufficiently large,
    \begin{alignat*}{2}
        y_1-y_2&>(B_1-A_1)P_\Delta(y_1-y_2;B_1-A_1)&&\;\;\;\;\;\text{by Lemma \ref{Plemma}}\\
        &>(B_1-A_1)\sigma^{\varepsilon\Delta}  P^1_\Delta(x-y_1;\textbf{A})&&\;\;\;\;\;\text{by }\phi_\bot(x;y)\\
        &>2^{\varepsilon}(B_1-A_1) P^1_\Delta(x-y_1;\textbf{A})&&\;\;\;\;\;\text{by Lemma \ref{beatsme}}\\
        &>(\textbf{B}-\textbf{A})\cdot P_\Delta(x-y_1;\textbf{A})&&\;\;\;\;\;\text{by Lemma \ref{domination}}.
    \end{alignat*}
    Thus,
    \begin{alignat*}{2}
        x-y_2&=x-y_1+y_1-y_2\\
        &>\textbf{A}\cdot P_\Delta(x-y_1;\textbf{A})+y_1-y_2&&\;\;\;\;\;\text{by (\ref{eqnPQ})}\\
        &>\textbf{A}\cdot P_\Delta(x-y_1;\textbf{A})+(\textbf{B}-\textbf{A})\cdot P_\Delta(x-y_1;\textbf{A})\\
        &=\textbf{B}\cdot P_\Delta(x-y_1;\textbf{A}),
    \end{alignat*}
    and so $\phi(x;y)\leftrightarrow \bot$.
    
    Next, suppose $\phi_5(x;y)$ holds. In particular, $\sup (B_1-A_1)\tilde{R}^1_\Delta \geq y_1-y_2$ by Remark \ref{Premark}, whence, for $\Delta\in d\N$ sufficiently large,
    \begin{alignat*}{2}
        y_1-y_2&\leq (B_1-A_1) Q_\Delta(y_1-y_2;B_1-A_1)&&\;\;\;\;\;\text{by Lemma \ref{Plemma}}\\
        &\leq (B_1-A_1) \sigma^{\varepsilon d} P_\Delta(y_1-y_2;B_1-A_1)&&\;\;\;\;\;\text{by Lemma \ref{Plemma}}\\
        &<(B_1-A_1) \sigma^{\varepsilon(d-\Delta)} P^1_\Delta(x-y_1;\textbf{A})&&\;\;\;\;\;\text{by }\phi_5(x;y).
    \end{alignat*}
    Using $\neg \phi_3(x;y)$, we have
    \[x-y_1\leq \max\{\textbf{A}\cdot z: z\in \tilde{R}^n_\Delta, z_1=P^1_\Delta(x-y_1;\textbf{A})\} < (A_1+\sigma^{-\lfloor \Delta/2\rfloor})P^1_\Delta(x-y_1;\textbf{A})\]
    by Lemma \ref{beatsme} (for $\Delta\in d\N$ sufficiently large). Thus,
    \begin{alignat*}{2}
        x-y_2&=x-y_1+y_1-y_2\\
        &< (A_1+\sigma^{-\lfloor \Delta/2\rfloor})P^1_\Delta(x-y_1;\textbf{A})+(B_1-A_1) \sigma^{\varepsilon(d-\Delta)} P^1_\Delta(x-y_1;\textbf{A})\\
        &< (B_1-\sigma^{-\lfloor \Delta/2\rfloor})P^1_\Delta(x-y_1;\textbf{A})&&\;\;\;\;\;\text{by Lemma \ref{beatsme}}\\
        &< \textbf{B}\cdot P_\Delta(x-y_1;\textbf{A})&&\;\;\;\;\;\text{by Lemma \ref{beatsme}},
    \end{alignat*}
    and so $\phi(x;y)\leftrightarrow \top$.
    %     % &< (A_1+\sigma^{-\lfloor \Delta/4\rfloor})P^1_\Delta(x-y_1;\textbf{A})&&\;\;\;\;\;\text{by Lemma \ref{beatsme}}\\
    %     % &< (B_1-\sigma^{-\lfloor \Delta/4\rfloor})P^1_\Delta(x-y_1;\textbf{A})&&\;\;\;\;\;\text{by Lemma \ref{beatsme}}\\
    %     % &<\textbf{B}\cdot P_\Delta(x-y_1;\textbf{A})&&\;\;\;\;\;\text{by Lemma \ref{beatsme}}.
        
    % If $B_1-A_1>0$, then by repeated application of Lemma \ref{beatsme} (assuming $\Delta\in d\N$ is sufficiently large), we have
    % \[x-y_2<(A_1+\sigma^{-\lfloor \Delta/4\rfloor})P^1_\Delta(x-y_1;\textbf{A})<(B_1-\sigma^{-\lfloor \Delta/4\rfloor})P^1_\Delta(x-y_1;\textbf{A})<\textbf{B}\cdot P_\Delta(x-y_1;\textbf{A}),\]
    % and so $\phi(x;y)\leftrightarrow \top$. If $B_1-A_1<0$, then by repeated application of Lemma \ref{beatsme} (assuming $\Delta\in d\N$ is sufficiently large), we have
    % \[x-y_2<-2\sigma^{\lfloor \Delta/2\rfloor}P^1_\Delta(x-y_1;\textbf{A})<\textbf{B}\cdot P_\Delta(x-y_1;\textbf{A}),\]
    
    Finally, suppose neither $\phi_\bot(x;y)$ nor $\phi_5(x;y)$ holds. Then there is $-\Delta\leq\alpha\leq\Delta$ such that $\phi_6^\alpha(x;y)$ holds. Conditioning on such $\phi_6^\alpha(x;y)$, we have
    \begin{align*}
        \phi(x;y)&\leftrightarrow x-y_2-B_1 P^1_\Delta(x-y_1;\textbf{A}) < \textbf{B}_{>1}\cdot P^{>1}_\Delta(x-y_1;\textbf{A})\\
        &\leftrightarrow x-y_2-B_1 P^1_\Delta(x-y_1;\textbf{A}) < \textbf{B}_{>1}\cdot P_\Delta\left(x-y_1-A_1P^1_\Delta(x-y_1;\textbf{A});\textbf{A}_{>1}\right)
    \end{align*}
    by (\ref{eqninductive}). But this is equivalent to $\phi^\alpha_7(x;y)$, since $\phi^\alpha_6(x;y)$ holds.\\
    % for some $-\Delta\leq\alpha\leq\Delta$, whence
    % % $\sigma^{-\Delta} P_\Delta(y_1-y_2;B_1-A_1)\leq P^1_\Delta(x-y_1;\textbf{A})\leq \sigma^\Delta P_\Delta(y_1-y_2;B_1-A_1)$. Then
    % \[\phi(x;y)\leftrightarrow \bigvee_{\alpha=-\Delta}^\Delta (\phi_6^\alpha(x;y) \wedge \phi(x;y)).\]
    % The $\alpha^{\text{th}}$ disjunct is equivalent to
    % \[\phi_6^\alpha(x;y) \wedge x-(y_2+B_1 \sigma^\alpha P_\Delta(y_1-y_2;B_1-A_1)) < \textbf{B}_{>1}\cdot P^{>1}_\Delta(x-y_1;\textbf{A}).\]
    % By (\ref{eqninductive}), this is equivalent to $\phi_6^\alpha(x;y)\wedge \phi_7^\alpha(x;y)$. Summarising, we have that
    % \[\phi(x;y)\leftrightarrow \phi_5(x;y)\vee\bigvee_{\alpha=-\Delta}^\Delta (\phi_6^\alpha(x;y)\wedge\phi_7^\alpha(x;y)).\]
    
    \underline{Case 2: ($A_1=_R B_1 \wedge t=1$) or $(\textbf{B}=\textbf{F}^i \wedge t=0)$.} Recall that we have assumed $\textbf{B}\neq \textbf{F}^1$; note then in particular that $B_1=_R tA_1$. We have
    \begin{align*}
        \phi(x;y)&\leftrightarrow tx-B_1 P^1_\Delta(x-y_1;\textbf{A})-y_2<\textbf{B}_{>1}\cdot P^{>1}_\Delta(x-y_1;\textbf{A})\\
        &\leftrightarrow tx-B_1 P^1_\Delta(x-y_1;\textbf{A})-y_2<\textbf{B}_{>1}\cdot P_\Delta\left(x-y_1-A_1P^1_\Delta(x-y_1;\textbf{A});\textbf{A}_{>1}\right)
    \end{align*}
    by (\ref{eqninductive}). But this is equivalent to $\phi_8(x;y)$, since $tx-B_1 P^1_\Delta(x-y_1;\textbf{A})=t(x-A_1P^1_\Delta(x-y_1;\textbf{A}))$ by the fact that $B_1=tA_1$.
    \end{proof}
\begin{theorem}\label{final}
    The structure $(\Z,<,+,R)$ is distal.
\end{theorem}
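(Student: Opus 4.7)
The plan is to observe that Theorem \ref{final} is an immediate corollary of the two main structural results already established in the paper, and the proof consists of nothing more than citing them in the correct order. There is no new technical content to introduce here; all the hard work has been done.

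First I would invoke Theorem \ref{sufficiency}, which reduces the distality of $(\Z,<,+,R)$ to verifying the statement $(F_n)$ for every $n \in \N^+$. This reduction step already absorbs the passage through Proposition \ref{agenda} (distality follows from $(E_n)$ for all $n \in \N$), Proposition \ref{EnFnimpliesGn} (the implication $(E_{n-1}) \wedge (F_n) \Rightarrow (G_n)$), Proposition \ref{goingup} (the implication $(E_{n-1}) \wedge (G_n) \Rightarrow (E_n)$), and the base case $(E_0)$, which holds because Presburger arithmetic is distal. So the only remaining obligation after citing Theorem \ref{sufficiency} is the family of statements $(F_n)$.

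Then I would invoke Theorem \ref{multidimension}, which asserts precisely that $(F_n)$ holds for every $n \in \N^+$. Combining the two gives the distality of $(\Z,<,+,R)$ directly.

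There is no genuine obstacle at this stage, since the combinatorial and inductive difficulties — in particular the construction of strong honest definitions via Theorem \ref{catchall}, the case split in the proof of Theorem \ref{multidimension} depending on whether $A_1 =_R B_1$, and the reduction lemmas (Lemmas \ref{canonicalwitness}, \ref{uv}, \ref{reduction}) — have all been discharged earlier. The \emph{only} thing to watch is that the hypotheses match up: $R$ has been fixed globally as a congruence-periodic sparse predicate, which is exactly what both Theorem \ref{sufficiency} and Theorem \ref{multidimension} assume. So the proof writes itself as a single sentence chaining the two theorems with the observation that $(E_0)$ holds by the distality of Presburger arithmetic.
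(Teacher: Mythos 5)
Your proof is correct and matches the paper's argument exactly: the paper's proof of Theorem \ref{final} is the single line ``Combine Theorems \ref{sufficiency} and \ref{multidimension}.'' Your surrounding commentary on what those two theorems already absorb is accurate but not needed for the proof itself.
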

\begin{proof}
Combine Theorem \ref{sufficiency} and Corollary \ref{sufficiencycriterion}.
\end{proof}
\bibliographystyle{plain}
\bibliography{divergebib}
\end{document}